      \newtheorem{assumption}{Assumption}
      \newtheorem{remark}{Remark}
      \newtheorem{theorem}{Theorem}
      \newtheorem{lemma}{Lemma}
      \newtheorem{corollary}{Corollary}
      \newtheorem{proposition}{Proposition}
      \newtheorem{definition}{Definition}
\crefname{definition}{Definition}{Definitions}
\crefname{assumption}{Assumption}{Assumptions}
\crefname{theorem}{Theorem}{Theorems}
\crefname{remark}{Remark}{Remarks}
\crefname{lemma}{Lemma}{Lemmas}
\crefname{corollary}{Corollary}{Corollaries}
\crefname{proposition}{Proposition}{Propositions}
\crefname{section}{Section}{Sections}
\crefname{subsection}{Subsection}{Subsections}
\crefname{example}{Example}{Examples}
\crefname{table}{Table}{Tables}
\crefname{problem}{Problem}{Problems}
\crefname{algocf}{Algorithm}{Algorithms}
\crefname{figure}{Figure}{Figures}
\newcommand{\norm}[1]{\left\Vert #1 \right\Vert}
\newcommand{\1}[1]{\textbf{1}\left[ #1 \right]}
\newcommand{\Prb}[1]{\mathbb{P}\left[ #1 \right]}
\newcommand{\E}[1]{\mathbb{E}\left[ #1 \right]}
\newcommand{\tr}[1]{\mathbf{tr}\left[  #1 \right]}
\newcommand{\bigO}[1]{\mathcal{O}\left[ #1 \right]}
\newcommand{\condPrb}[2]{\mathbb{P}\left[\left. #1 \right\vert #2 \right]}
\newcommand{\linspan}[1]{\mathrm{span}\left[ #1 \right]}
\newcommand{\range}[1]{\mathrm{range}\left( #1 \right)}
\newcommand{\nullsp}[1]{\mathrm{ker}\left( #1 \right)}
\def \PP {\mathbb{P}}
\DeclareMathOperator{\row}{row}
\DeclareMathOperator{\rrow}{rrow}
\title{An Implicit Representation and Iterative Solution of Randomly Sketched Linear Systems}
\author{Vivak Patel, Mohammad Jahangoshahi \& Daniel Adrian Maldonado}
\date{}
\begin{document}

\maketitle 

\begin{abstract}
Randomized linear system solvers have become popular as they have the potential to reduce floating point complexity while still achieving desirable convergence rates. One particularly promising class of methods, random sketching solvers, has achieved the best known computational complexity bounds in theory, but is blunted by two practical considerations: there is no clear way of choosing the size of the sketching matrix \textit{apriori}; and there is a nontrivial storage cost of the sketched system. In this work, we make progress towards addressing these issues by implicitly generating the sketched system and solving it simultaneously through an iterative procedure. As a result, we replace the question of the size of the sketching matrix with determining appropriate stopping criteria; we also avoid the costs of explicitly representing the sketched linear system; and our implicit representation also solves the system at the same time, which controls the per-iteration computational costs.

Additionally, our approach allows us to generate a connection between random sketching methods and randomized iterative solvers (e.g., randomized Kaczmarz method, randomized Gauss-Seidel). As a consequence, we exploit this connection to (1) produce a stronger, more precise convergence theory for such randomized iterative solvers under arbitrary sampling schemes (i.i.d., adaptive, permutation, dependent, etc.), and (2) improve the rates of convergence of randomized iterative solvers at the expense of a user-determined increases in per-iteration computational and storage costs. We demonstrate these concepts on numerical examples on forty-nine distinct linear systems.
\end{abstract}

\section{Introduction}
Over the past few decades, randomized linear system solvers have become popular as they have the potential to reduce floating point complexity or maintain limited memory footprints, while still achieving desirable convergence rates \citep[e.g.,][]{strohmer2009,woodruff2014}. In particular, the noniterative class of randomized linear system solvers, based on random matrix sketching \citep[see][]{woodruff2014}, have exceptionally low computational complexities, at least in theory. 
Unfortunately, the theoretical promise of these random matrix sketching solvers is blunted by their practical limitations: there is no clear way of choosing the size of the sketching matrix and there is a nontrivial storage cost of the projected system \citep{mahoney2016}. In fact, the practical challenges of random matrix sketching solvers have prevented them from being fully embraced by the numerical optimization community \citep[e.g.,][]{nocedal2018}.

In this work, we begin to address these two primary practical issues of random matrix sketching, which we recall are: the challenge of choosing the size of the sketching matrix, and the challenge of storing the projected system. Our main insight is to recast the separate sketch-\textit{then}-solve core of random sketching methods into an equivalent, iterative sketch-\textit{and}-solve, in which the sketching matrix is generated incrementally without being explicitly stored and the system is incrementally solved from the implicitly derived sketched matrix.\footnote{It is worth mentioning that the random sketch solvers have been used iteratively in a different sense \citep[e.g., see][]{gower2015}: the noniterative scheme is simply repeated in order to get better convergence properties. We are not doing this, but rather turning the noniterative scheme into an iterative one.} As a result of our approach, (1) we can implicitly grow the size of the sketching matrix until a user-determined stopping criteria is reached without having to determine the size of the sketching matrix apriori; (2) we implicitly represent the sketched system without having to explicitly store the projected system, which allows us to avoid the cost of storing the projected system;
and (3) we can naturally implement random sketching solvers within distributed and parallel computing paradigms. Thus, our approach of converting the usual sketch-\textit{then}-solve procedure to a sketch-\textit{and}-solve procedure begins to address the aforementioned practical challenges of random matrix sketching.

Moreover, our approach provides a bridge between the newer concerns around sketching-based solvers and more classical areas of applied mathematics research such as stopping criteria. One such bridge is the placement of random sketching methods and (what we will call) base randomized iterative methods\footnote{We will be more precise about what we refer to as base methods. For now, such methods are exemplified by randomized Kaczmarz \citep{strohmer2009} and randomized Gauss-Seidel \citep{leventhal2010}.} on a single spectrum of procedures, which has several immediate consequences.

First, the number of rows of the sketching matrix that results in the solution (this number is a random quantity) connects to an alternative rate-of-convergence result for general base randomized iterative methods that guarantees a rate-of-convergence less than one for arbitrary sampling schemes---even for underdetermined systems (\cref{theorem:no-mem-convergence}). Consequently, our results complement and improve on previous results in several ways. In particular, we allow for arbitrary sampling schemes, not just sampling schemes that are independent and identically distributed as in \cite{gower2015} (Lemma 4.2), \cite{richtarik2017} (Theorem 4.8), \cite{zouzias2013} (Theorem 3.4), and \cite{ma2015} (Equation 3.10). Moreover, our results do away with the exactness assumption \citep[see][Assumption 2]{richtarik2017}, and precisely characterize the inexactness that can occur for arbitrary sampling schemes (\cref{theorem:no-mem-convergence,theorem:no-mem-column-convergence}). 
Additionally, our results define convergence on a maximal set---effectively, a set occurring with probability one for sampling schemes of interest---, which builds on the work of \cite{chen2012}. As example applications of our results, we supply rates of convergence with probability one for random permutation sampling methods (\cref{theorem-no-mem-w-o-replacement}) and independent, identically distributed sampling schemes (asymptotically, see \cref{theorem-no-mem-w-replacement}). 
As a more interesting application of our results, we specify generic conditions for the convergence of a broad class of adaptive schemes (see \cref{subsection:adaptive-sampling}), which can account for the maximum residual scheme, the maximum distance scheme, schemes that randomize over a greedy subset, and schemes that are greedy over randomized subsets \citep{motzkin1954,gubin1967,lent1976,censor1981,nutini2016,bai2018,haddock2019}. 
We note that the rates that we provide as examples are rather loose in comparison to results that are specialized to each case, yet our results often supply information that is \textit{not} available in these other results as discussed above.

Second, we can generate a series of ``intermediate'' procedures between sketching methods and base methods that trade-off between computational resources (e.g., floating-point operations, storage) and rates of convergence. Thus, we can take a sketching method and reduce its computational footprint in exchange for a slower rate of convergence, or increase the computational footprint of base methods to improve their rate of convergence (\cref{alg: rank-one RPM low mem}). Moreover, these ``intermediate'' procedures can be readily parallelized as we discuss in \cref{section: overview}.

Finally, by shifting our perspective from improving the sketch-\textit{then}-solve procedure to improving the performance of base methods, we find that our approach is a randomized orthogonalization procedure in the row space of the coefficient matrix of the linear system. Thus, by presenting our approach from this latter perspective, we will simplify the introduction and the related theory of our approach. Now, before pursuing this further, we reiterate our main contributions.
\begin{enumerate}
\item First, we turn the typical sketch-\textit{then}-solve noniterative random sketching solver into an iterative, sketch-\textit{and}-solve method, which lays a foundation for addressing the previously enumerated practical challenges of random sketching solvers: there is no clear way of choosing the size of the sketching matrix \textit{apriori}; and there is a nontrivial storage cost of the sketched system.
\item Second, through our approach, we place random sketching methods and base randomized iterative methods (e.g., randomized Kaczmarz, randomized Gauss-Seidel, and Sketch-and-Project \citep{gower2015}) on a single spectrum of methods.
\item Third, owing to this connection, we are able to generate ``intermediate'' methods between random sketching and base methods, which can trade-off between computational resources and rates of convergence. 
\item Fourth, owing to this connection, we use the geometric implications of random sketching methods to develop an alternative rate-of-convergence result for general base methods for arbitrarily determined systems and \textit{arbitrary sampling schemes}, which advances the with-probability-one results of \cite{chen2012}, generalizes the deterministic cyclic results in \cite{bai2013,galantai2005,wallace2014}, complements the mean-squared error results of \cite{richtarik2017}, and accounts for a litany of adaptive methods considered in \cite{motzkin1954,gubin1967,lent1976,censor1981,nutini2016,bai2018,haddock2019}.
\item Finally, we provide a generic set of conditions for characterizing a broad class of adaptive methods, and, from these conditions, prove convergence and rate-of-convergence results for a number of classical and emerging adaptive methods in the literature under a unified framework (see \cref{subsection:adaptive-sampling}).
\end{enumerate}

The remainder of this paper is organized as follows. In \cref{section: overview}, we introduce our procedure; we state the connection between our procedure and random sketching methods, which allows us to convert the less practical sketch-\textit{then}-solve approach to our sketch-\textit{and}-solve approach; and, finally, we introduce our general algorithm and variants for low-memory environments, shared memory environments, distributed memory environments, and large, sparse, structured linear systems. In \cref{section:full-memory,section:no-memory}, we develop the convergence theory for the two methodological extremes---sketching and base methods---leaving the intermediate, more complex cases to future work, and discuss particular examples. In \cref{section: experiments}, we test our algorithms on forty-nine distinct linear systems. In \cref{section:conclusion}, we conclude this work and preview future efforts.

\section{Our Procedure} \label{section: overview}
While our motivating application is to address the practicality of random sketching methods, our approach is best introduced from the perspective of base randomized iterative methods. Here, we review the basic formulation of randomized iterative methods (\cref{subsection:overview}), which we then use to heuristically introduce our general procedure (\cref{subsection:derivation}). We then refine our procedure for the case of rank-one methods, such as Randomized Kaczmarz and Randomized Gauss-Seidel, which allows us to restate random sketching from a sketch-\textit{then}-solve procedure to a sketch-\textit{and}-solve procedure (\cref{subsection:rank-one-and-sketching}). We conclude this section with comments on algorithmic refinements for parallel platforms (\cref{subsubsection:parallel}), limited memory platforms (\cref{subsection:low-memory}), and, for structured linear systems, limited communication platforms (\cref{subsubsection:structured}).

\subsection{A Brief Overview} \label{subsection:overview}
Let $A \in \mathbb{R}^{n \times d}$ and $b \in \mathbb{R}^n$ be the coefficient matrix and constant vector, respectively. Assuming consistency, our goal is to determine an $x^* \in \mathbb{R}^d$, not necessarily unique, such that
\begin{equation} \label{eqn: linear system}
Ax^* = b.
\end{equation}
In a base randomized iterative approach, a sequence of iterates $\lbrace x_k : k +1 \in \mathbb{N} \rbrace$ is generated that has the form
\begin{equation} \label{eqn: general random update}
x_{k+1} = x_k + V_{k}(b - Ax_k),
\end{equation}
where $V_k \in \mathbb{R}^{d \times n}$ are independent random variables, which we call residual projection matrices (RPM). The RPM defines the base technique which is being used. To make this formulation concrete, we give several examples of randomized iterative methods that have this formulation.

\paragraph{Randomized Kaczmarz.} Let $A_{i,} \in \mathbb{R}^d$ denote the $i^{\text{th}}$ row of $A$ and let $e_{i}$ denote the $i^{\text{th}}$ standard basis vector of dimension $n$. Define the random variable $I$ such that
$$ \Prb{I = i} = \begin{cases}
\frac{\norm{A_{i,}}_2^2}{\norm{A}_F^2} & i=1,\ldots,n \\
0 & \text{otherwise}
\end{cases}.$$
Now, given an independent copy of $I$ at each $k$, define the RPM, $V_{k} = A_{I,} e_{I}'/\norm{A_{I,}}_2^2.$ Then, using \cref{eqn: general random update},
\begin{align*}
x_{k+1} = x_k + A_{I,} e_{I}'(b - Ax_k) = x_k + A_{I,}(b_I - A_{I,}'x_k)/\norm{A_{I,}}_2^2,
\end{align*}
which is the Randomized Kaczmarz method of \citet{strohmer2009}. $\quad\blacksquare$

\paragraph{Randomized Gauss-Seidel.} Let $A_{,j} \in \mathbb{R}^n$ denote the $j^{\text{th}}$ column of $A$ and let $f_j$ denote the $j^{\text{th}}$ standard basis vector of dimension $d$. Define a random variable $J$ such that
$$ \Prb{J = j} = \begin{cases}
\frac{\norm{A_{,j}}_2^2}{\norm{A}_F^2} & j = 1,\ldots,d \\
0 & \text{otherwise}
\end{cases}.$$
Now, given an independent copy of $J$ at each $k$, define the RPM, $V_{k} = e_{J} A_{,J}'/\norm{A_{,J}}_2^2.$ Then, using \cref{eqn: general random update},
\begin{align*}
x_{k+1} = x_k + e_J A_{,J}'(b - Ax_k)/\norm{A_{,J}}_2^2,
\end{align*}
which is the Randomized Gauss-Seidel method of \citet{leventhal2010}. $\quad\blacksquare$

\paragraph{Randomized Block Coordinate Descent.} Let $t$ be a subset of $\lbrace 1,\ldots,d \rbrace$. Let $E_{t} \in \mathbb{R}^{d \times |\tau|}$ whose columns are the $d$-dimensional standard basis vectors whose non-zero components correspond to the indices in $t$. Let $\mathcal{T}$ be a partition $\lbrace 1,\ldots, d \rbrace$, and define a random variable $T$ that randomly selects a partition in $\mathcal{T}$. Given an independent copy of $T$ at each $k$, define the RPM, $V_k = ( E_T' A' A E_T )^\dagger E_T'A'$. Then, using \cref{eqn: general random update},
\begin{align*}
x_{k+1} = x_k + ( E_T' A' A E_T )^\dagger E_T'A'(b - A x_k),
\end{align*}
which is a version of the randomized block coordinate descent method specified by \citep[Equation 3.14]{gower2015}. $\quad\blacksquare$

\paragraph{Sketch-and-Project.} Let $\lbrace N_0,N_1,\ldots \rbrace$ be a sequence of sketching matrices with $n$ columns.  Define the $k^{\mathrm{th}}$ RPM to be $V_k = A'N_k'( N_k A A' N_k')^\dagger N_k$. Then, using \cref{eqn: general random update},
\begin{align*}
x_{k+1} = x_k + A'N_k'( N_k A A' N_k')^\dagger N_k (b - Ax_k),
\end{align*}
which is the general sketch-and-project method \citep[Equation 2.2]{gower2015}. $\quad\blacksquare$

\subsection{A Heuristic Derivation} \label{subsection:derivation}

Here, given a strategy for defining $\lbrace V_k : k +1 \in \mathbb{N} \rbrace$, we consider how to augment the randomized iterative method with prior information in order to improve convergence. For this purpose, we propose defining a sequence of matrices $\lbrace M_k : k +1 \in \mathbb{N} \rbrace \subset \mathbb{R}^{d \times d}$ (discussed below) and modify \cref{eqn: general random update} to be
\begin{equation} \label{eqn: optimal general random update}
x_{k+1} = x_k + M_k V_k (b - Ax_k).
\end{equation}
Of course, $M_k$ can simply be absorbed by $V_k$; however, our goal is to augment a randomized iterative method. For this reason, we will keep these two quantities separate. 

The main question now is how to choose $\lbrace M_k : k + 1 \in \mathbb{N} \rbrace$. Our guiding principle is that $M_k$ should minimize some measure of error between $x_{k+1}$ and $x^*$. However, implementing this guiding principle requires (1) choosing an appropriate error measure and (2) handling the fact that $x^*$ is unknown. In order to convey the intuition behind our procedure, we now state the heuristics that we use to make these choices.

\paragraph{Choosing an Error Measure.} Temporarily, suppose $x^*$ is known, and suppose we choose the $l^1$ error as our measure. Then, we must minimize the difference between the next iterate and $x^*$. While this error metric might have merit, solving it is a convex optimization problem that is as difficult to solve as the original linear system. Therefore, we will need an error measure which gives an explicit representation for $M_k$. Hence, one sensible choice is to use the Mahalanobis norm,
\begin{equation} \label{eqn: optimization problem}
\norm{x_{k+1} - x^*}_B^2,
\end{equation}
where $B$ is a positive definite, symmetric $\mathbb{R}^{d \times d}$ matrix.

\paragraph{Compensating for the Unknown Solution.} Now, we consider the task of compensating for the unknown $x^*$. 
For a fixed $x^*$ and for all $k+1 \in \mathbb{N}$, let $S_k = (x_k - x^*)(x_k - x^*)'$. Then, $S_{k+1}$ is related to $S_k$ by
\begin{align}
S_{k+1} &= (I - M_k V_k A) S_k (I - M_k V_k A)', \label{eqn: variance update}
\end{align}
where we have made use of \cref{eqn: optimal general random update}. 
Using \cref{eqn: variance update}, we can rewrite \cref{eqn: optimization problem} as
\begin{align*}
\norm{x_{k+1} - x^*}_B^2 = \tr{B (I - M_k V_k A) S_k (I - M_k V_k A)'}.
\end{align*}
To find an optimal $M_k$, we differentiate the right hand side and set the quantity equal to zero, which, explicitly is
\begin{equation} \label{eqn: linear matrix equation}
M_k (V_k A S_k A' V_k') - S_k A'V_k' = 0.
\end{equation}
Clearly, $V_kA {S}_k A'V_k'$ is positive semi-definite, so the solution to such a system will be the minimizer of the original objective function. However, \cref{eqn: linear matrix equation} may have many possible solutions or may fail to be consistent. In the case of nonunique solutions, we arbitrarily choose the solution with the smallest Frobenius norm. In the case of an inconsistent system, we arbitrarily choose the solution that minimizes the Frobenius norm of the residual and has the minimal Frobenius norm.  In both cases, a straightforward calculation gives
\begin{equation} \label{eqn: gain matrix}
{M}_k = S_k A' V_k' (V_k A S_k A' V_k')^\dagger,
\end{equation}
where $\dagger$ represents the Moore-Penrose Pseudo-inverse. Using \cref{eqn: gain matrix} with \cref{eqn: variance update}, we have the following recursion
\begin{equation} \label{eqn: updated variance update}
S_{k+1} = S_k - S_k A' V_k'(V_k A S_kA'V_k')^\dagger V_k A_k S_k.
\end{equation}
From \cref{eqn: gain matrix} and \cref{eqn: updated variance update}, it is clear that if $S_0$ were known, then the remaining unknown quantities could be determined. 

\paragraph{Our Procedure.} Since $S_0$ is unknown, we use the following heuristic procedure instead. First, we let $S_0 = I_d$, where $I_d$ is the $d$-dimensional identity matrix. Then, we recursively define $M_k$ and $S_k$ according to \cref{eqn: gain matrix,eqn: updated variance update}. To summarize, given $\lbrace V_k : k+1 \in \mathbb{N} \rbrace$, we let $S_0 = I_d$, let $x_0 \in \mathbb{R}^d$, and define
\begin{equation} \label{eqn: iterate update heur}
x_{k+1} = x_k + M_k V_k (b - Ax_k),
\end{equation}
where
\begin{equation} \label{eqn: gain matrix heur}
M_k = S_k A' V_k' (V_k A S_k A' V_k')^\dagger;
\end{equation}
and
\begin{equation} \label{eqn: variance update heur}
S_{k+1} = S_k - S_k A' V_k' (V_k A S_k A' V_k')^\dagger V_k A_k S_k.
\end{equation}

To interpret the terms in the above procedure, we begin by ignoring $S_k$ (i.e., set it to the identity). In this case, $M_k$ and its role in updating $x_k$ to $x_{k+1}$ is familiar: $M_k$ serves to map the residual onto the row space of $V_kA$, thereby ensuring that $x_{k+1}$ satisfies $V_k A x_{k+1} = V_k b$. If we now consider the role of $S_k$, we see that it is an orthogonal projector that ``weights'' the behavior of $M_k$ to ensure that $x_{k+1}$ satisfy $V_i A x_{k+1} = V_i b$ for $i \leq k$. We will see these interpretations clearly and formally when we focus on the case of rank-one $V_k$ next.

We pause here momentarily to discuss the relationship between our procedure, as specified by \cref{eqn: iterate update heur,eqn: gain matrix heur,eqn: variance update heur}, and the sketch-and-project method in \cite{gower2015} and \cite{richtarik2017}. At first glance, it may seem that our procedure is a special case of sketch-and-project with adaptive choices of the inner product at each iteration of the sketch-and-project update. Unfortunately, an effort to recast our approach as a special case of sketch-and-project breaks down at two fundamental points. First, the adaptive choices of the sketch-and-project inner product would have to be the inverse of $S_k$, which are orthogonal projection matrices. As a result, the inverse is ill-defined and the inner product is ill-defined. Of course, this can be rectified by allowing for a pseudo-metric, but this then results in the second major point of difficulty: the theory presented in \cite{gower2015} and \cite{richtarik2017} relies on the determinism and invertibility of the matrix defining the metric space to prove convergence. Thus, sketch-and-project, without a substantial investment, cannot readily include our approach. On the other hand, we can state sketch-and-project as a base randomized iterative approach, as shown in \cref{subsection:derivation}, and then improve on it with our procedure via \cref{eqn: iterate update heur,eqn: gain matrix heur,eqn: variance update heur}.

\subsection{Rank-One Refinements and Random Sketching} \label{subsection:rank-one-and-sketching}

By choosing $x_0 \in \mathbb{R}^d$ and $S_0 = I_d$, \cref{eqn: iterate update heur,eqn: gain matrix heur,eqn: variance update heur} describe an orthogonal projection procedure for typical randomized iterative procedures. However, because our goal is to improve the practicality of random sketching methods, we will need to focus on a particular refinement of the general procedure that occurs when $\lbrace V_k \rbrace$ are rank-one matrices, that is, when there exist pairs of vectors $\lbrace (v_k,w_k) \rbrace$ such that $V_k = v_k w_k'$ for each $k$. In this case, \cref{eqn: gain matrix heur,eqn: variance update heur} become
\begin{equation} \label{eqn: gain rank-one update}
M_k = \begin{cases}
\frac{1}{w_k'A S_k A'w_k \norm{v_k}_2^2} S_k A' w_k v_k' & S_k A'w_k \neq 0 \\
0 & \text{ otherwise,}
\end{cases}
\end{equation}
and
\begin{equation} \label{eqn: rank-one update-matrix}
S_{k+1}=  \begin{cases}
S_k - \frac{1}{w_k'A S_k A'w_k} S_k A'w_k w_k'AS_k  & S_k A'w_k \neq 0 \\
S_k & \text{ otherwise.}
\end{cases}
\end{equation}
Moreover, if we substitute \cref{eqn: gain rank-one update} into \cref{eqn: iterate update heur}, we recover

\begin{equation} \label{eqn: rank-one update-param}
x_{k+1} = \begin{cases}
x_k + \frac{1}{w_k'A S_k A'w_k} S_k A' w_k w_k'(b - Ax_k) & S_k A'w_k \neq 0 \\
x_k & \mathrm{otherwise}.
\end{cases}  \\
\end{equation}

It follows from \cref{eqn: rank-one update-param,eqn: rank-one update-matrix} that in the case of a rank-one RPM, \textit{the left singular vector of the RPM is not important}. To give some explicit examples, recall that rank-one RPM methods include the important special cases of randomized Kaczmarz and Gauss-Seidel. 

\paragraph{Randomized Kaczmarz with Orthogonalization.} Let $A_{i,} \in \mathbb{R}^d$ denote the $i^{\text{th}}$ row of $A$ and let $e_{i}$ denote the $i^{\text{th}}$ standard basis vector of dimension $n$. Define the random variable $I$ arbitrarily taking values in $\lbrace 1,\ldots,n \rbrace$. Now, given an independent copy of $I$ at each $k$, the randomized Kaczmarz method has rank-one RPM, $V_k = A_{I,} e_{I}'/\norm{A_{I,}}_2^2.$ Then, using \cref{eqn: rank-one update-param,eqn: rank-one update-matrix}, the randomized Kaczmarz method with orthogonalization is
$$
\begin{aligned}
x_{k+1} &= x_{k} + \frac{1}{e_I'A S_k A'e_I } S_k A' e_I e_I' (b - A x_k) \\
S_{k+1} &= \left( I_d - \frac{1}{e_I'A S_k A' e_I} S_k A' e_I e_I'A \right) S_k,
\end{aligned}
$$
when $S_k A' e_I \neq 0$, or is $x_{k+1} = x_{k}$ and $S_{k+1} = S_k$ otherwise. $\quad\blacksquare$

\paragraph{Randomized Gauss-Seidel with Orthogonalization.} Let $A_{,j} \in \mathbb{R}^n$ denote the $j^{\text{th}}$ column of $A$ and let $f_j$ denote the $j^{\text{th}}$ standard basis vector of dimension $d$. Define a random variable $J$ arbitrarily taking values in $\lbrace 1,\ldots,d \rbrace$. Now, given an independent copy of $J$ at each $k$, the randomized Gauss-Seidel method has rank-one RPM, $V_k = e_{J} A_{,J}'/\norm{A_{,J}}_2^2.$ Then, using \cref{eqn: rank-one update-param,eqn: rank-one update-matrix}, the randomized Gauss-Seidel method with orthogonalization is
$$
\begin{aligned}
x_{k+1} &= x_k + \frac{1}{A_{,J}'A S_k A'A_{,J}} S_k A' A_{,J} A_{,J}'(b - Ax_k) \\
S_{k+1} &= \left( I_d - \frac{1}{A_{,J}'A S_k A'A_{,J}} S_k A' A_{,J} A_{,J}'A \right) S_k,
\end{aligned}
$$
when $S_k A' A_{,J} \neq 0$, or is $x_{k+1} = x_{k}$ and $S_{k+1} = S_k$ otherwise. $\quad\blacksquare$

Again, we see from the two preceding examples that the left singular vector of the rank-one RPM does not play a role in the updates for our procedure. As we now explain, this observation is critical for converting the impractical, noniterative randomized sketch-\textit{then}-solve methods into iterative randomized sketch-\textit{and}-solve methods. 

Recall that the fundamental sketch-then-solve procedure is to construct a specialized matrix $N^\mathrm{sketch} \in \mathbb{R}^{k \times n}$, then generate and solve the smaller, sketched problem $(N^\mathrm{sketch}A)x = N^\mathrm{sketch}b$ \citep[see][Ch. 1]{woodruff2014}.\footnote{We note that the typical formulation considers linear regression rather than a linear system.} The special matrix $N^\mathrm{sketch}$, called the sketching matrix, can be generated in a variety of ways such as making each entry an independent, identically distributed Gaussian random variable \citep{indyk1998}, or by setting the columns of $N^\mathrm{sketch}$ as uniformly sampled columns (with replacement) of the appropriately-dimensioned identity matrix \citep{cormode2005}.

In order to convert the usual sketch-\textit{then}-solve procedure into our sketch-\textit{and}-solve procedure, we simply set $\lbrace w_k : k +1 \in \mathbb{N} \rbrace \subset \mathbb{R}^n$ 
to the transposed rows of $N^\mathrm{sketch}$, which we will rigorously demonstrate in \cref{section:full-memory}. Of course, this requires that we have a streaming procedure for generating arbitrarily many rows of $N^\mathrm{sketch}$. For concreteness, we show how to do this for the two sketching strategies just mentioned.

\paragraph{Random Gaussian Sketch.} In the random Gaussian sketch, the entries of the sketching matrix, $N^\mathrm{sketch}$, are independent, standard normal random variables. Accordingly, we let $\lbrace w_k \rbrace$ be independent, $n$-dimensional standard normal vectors. We see that if $N^\mathrm{sketch}$ has $r$ rows, then $N^\mathrm{sketch}$ and
\begin{equation*}
\begin{bmatrix}
w_0' \\
w_1' \\
\vdots \\
w_{r-1}'
\end{bmatrix}
\end{equation*}
have the same distribution. $\quad\blacksquare$

\paragraph{Count Sketch.} Fix $K \in \mathbb{N}$, and let $\lbrace E_1,E_2,\ldots \rbrace$ be drawn from the $\mathbb{R}^K$ standard basis vectors with replacement. Define a sequence of Rademacher random variables $\lbrace R_1,R_2,\ldots \rbrace$ which are independent and independent of $\lbrace E_1,E_2,\ldots \rbrace$. The count sketch sketching matrix, $N^\mathrm{sketch}$, is specified by
\begin{equation*}
\begin{bmatrix}
R_1 E_1 & R_2E_2 & \cdots & R_n E_n
\end{bmatrix},
\end{equation*}
which is a matrix whose entries are either $-1$, $0$ or $1$. Generally, the choice of $K$ is the topic of substantial theory and consideration \citep{cormode2005,clarkson2017}. Owing to the fact that we have a streaming procedure, we do not need to worry too much about $K$. Therefore, we generate $\lbrace w_k \rbrace$ as follows:
\begin{enumerate}
\item Generate a count sketch matrix with $K$ small. In our experiments below, we used $K = 10$. 
\item To generate a $w_k$, pop a row of the matrix and set it to $w_k$.
\item Once the count sketch matrix is exhausted, regenerate a new count sketch matrix with the same $K$. Repeat.
\end{enumerate}
From this strategy, (a) if we let $\lbrace N_{(i)}: i \in \mathbb{N} \rbrace$ denote a sequence of independent $K \times n$ count sketch matrices, (b) $i_k$ denote the remainder of an integer $k$ divided by $K$ and incremented by one, and (c) we let $\lbrace e_i \rbrace$ denote the standard basis vectors of $\mathbb{R}^K$, then $w_k = N_{(i_k)}'e_{i_k}$ for all $k+1 \in \mathbb{N}$. $\quad\blacksquare$

Thus, if we let \texttt{RPMStrategy()} define a generic user-defined procedure for choosing $\lbrace w_k : k +1 \in \mathbb{N} \rbrace$, then this observation gives us \cref{alg: rank-one RPM full mem} for (1) converting the sketch-\textit{then}-solve procedure into a sketch-\textit{and}-solve procedure, and (2) adding orthogonalization to such base methods as randomized Kaczmarz and randomized Gauss-Seidel.  

\begin{algorithm}[ht]
\KwData{Initialization $x_0$, \texttt{RPMStrategy()} for $w_0,w_1,\ldots$, \texttt{TerminationCriteria()}}
\KwResult{Estimate $\hat{x}$}
\BlankLine
$k \leftarrow 0$ \\
$S \leftarrow I_d$ \medskip

\While{ TerminationCriteria() == false}{
\tcp{Compute search direction}
$w_k \leftarrow RPMStrategy()$ \\
$q_k \leftarrow A'w_k$ \\
$u_k \leftarrow S_kq_k$ \medskip 

\tcp{Check if $S_k A'w_k = 0$}
\If{$u_k == 0$}{ $k \leftarrow k+1$ \\ continue to next iteration } \medskip

\tcp{Update Iterate}
$r_k \leftarrow b'w_k - q_k'x_k$ \\
$\gamma_k \leftarrow u_k'q_k$ \\
$x_{k+1} \leftarrow x_k + u_k \left(r_k/\gamma_k \right)$ \medskip

\tcp{Update Projection Matrix}
$S_{k+1}  \leftarrow (I - \frac{1}{\gamma_k} u_k q_k' )S_k$ \medskip 

\tcp{Update Iteration Counter}
$ k \leftarrow k+1$

}
\Return{$x_{k+1}$}
\caption{Rank-One RPM Method \label{alg: rank-one RPM full mem}}
\end{algorithm}

\subsection{Algorithmic Refinements Considering the Computing Platform}

\cref{alg: rank-one RPM full mem} implicitly assumes the traditional sequential programming paradigm. However, the performance of the algorithm can be improved by taking advantage of parallel computing architectures.  %Furthermore, when we consider specific computational configurations, we will see that our procedure enjoys a great deal of flexibility.
Here, we will consider a handful of important computing architecture abstractions and how our procedure can adapt to different configurations. In \cref{subsubsection:parallel}, we will consider the case of a parallel computing architecture for which the communication overhead, which is proportional to the dimension $d$, is not a limiting factor. For this subsection, the problems that we have in mind come from data and imaging sciences, where $n \gg d$ and $d$ is reasonably sized. In \cref{subsection:low-memory}, we consider a similar class of problems where the communication of $\bigO{d}$-sized vectors is acceptable and $n \gg d$, but that $d$ is so large that storing and manipulating a matrix in $\mathbb{R}^{d \times d}$ is burdensome. Finally, in \cref{subsubsection:structured} we will consider problems in which computational overhead becomes a bottleneck for scalability, but that we have structured systems that will allow us to circumvent this issue. For this ultimate subsection, the problems that we have in mind here come from the solution of systems of differential equations \citep[e.g.,][]{dongarra1986}.

\subsubsection{Asynchronous Parallelization on Shared and Distributed Memory Platforms} \label{subsubsection:parallel}
First, when we are using a matrix sketch for \texttt{RPMStrategy()}, one of the expensive components of the computation is determining $\begin{bmatrix} A & b \end{bmatrix}'w_k$. Fortunately, in our sketch-and-solve procedure, this expensive computation can be trivially asynchronously parallelized on a shared memory platform when
\begin{enumerate}
\item the data within the rows $\begin{bmatrix} A & b \end{bmatrix}$ are stored together, and
\item the \texttt{RPMStrategy()} generates $\lbrace w_k : k+1 \in \mathbb{N} \rbrace$ that are either independent (e.g., the Gaussian Strategy) or can be grouped into independent subsets (e.g., the Count-Sketch strategy).
\end{enumerate} 
When these two requirements are met, each processor can generate its own $\lbrace w_k : k+1 \in \mathbb{N} \rbrace$ independently of the other processors, and evaluate $\begin{bmatrix} A & b \end{bmatrix}'w_k$. It can then simply write the resulting row to an address reserved for performing the iterate and $S_k$ matrix updates by the master processor. Importantly, this procedure does not require locking any of the rows of $\begin{bmatrix} A & b \end{bmatrix}$, and the reserved addresses can use fine grained locks to prevent any wasted calculations.

Similarly, in our sketch-and-solve procedure, computing $\begin{bmatrix} A & b \end{bmatrix}'w_k$ can be trivially asynchronously parallelized on a distributed memory platform using a Fork-join model, when
\begin{enumerate}
\item the rows of $\begin{bmatrix} A & b \end{bmatrix}$ are distributed across the different storages, and
\item the \texttt{RPMStrategy()} generates $\lbrace w_k : k+1 \in \mathbb{N} \rbrace$ such that $w_k$ have independent groups of components (e.g., the Gaussian Strategy and the Count-Sketch strategy).
\end{enumerate}
When these two requirements are met, each processor can generate its own $\lbrace w_k : k+1 \in \mathbb{N} \rbrace$ and operate on the local rows of $\begin{bmatrix} A & b \end{bmatrix}$. It can then simply pass the resulting row to the master processor which performs the iterate and $S_k$ matrix updates. For each iteration, a scattering and gathering of the data is performed but no other data exchange is required.

\Cref{table:full-memory-parallel-comparison} summarizes the time and total computational costs of computing $x_k$ and $S_k$ from $x_0$ and $S_0$ in the following context: (1) the sequential platform refers to the case where there is a single processor with a sufficiently large memory to store the system, and perform the necessary operations in \cref{alg: rank-one RPM full mem};
(2) the shared memory platform assumes that there are $p+1$ processors that share a sufficiently large memory. One of the processors is dedicated to performing the iterate and matrix updates, while the remaining $p$ processors compute $\begin{bmatrix} A & b \end{bmatrix}'w_k$;
(3) the distributed memory architecture assumes that there are $p+1$ processors each with a sufficient memory capacity. The rows of $\begin{bmatrix} A & b \end{bmatrix}$ are split evenly or nearly evenly amongst $p$ of the processors, and each process only manipulates its local information about $A$ and $b$. Finally, master processor is dedicated to performing the iterate and matrix updates.

\begin{table}[!htb]
\centering
\renewcommand{\arraystretch}{1.3}
\begin{tabular}{@{}lrcrcrcrcrc@{}} \toprule
\multicolumn{11}{c}{\textbf{Total Time and Effort Costs to Iteration $k$}} \\ \midrule 
\multicolumn{1}{c}{Platform} & & \multicolumn{3}{c}{Computing $\begin{bmatrix} A & b \end{bmatrix}'w$} & & \multicolumn{3}{c}{Update Costs} & & Network \\ 
\cmidrule{3-5}
\cmidrule{7-9}
  & & Time & & Total Effort & & Iterate & & Matrix & & \\ \midrule 
Sequential & & $\bigO{knd}$ & & $\bigO{knd}$ & & $\bigO{kd^2}$ & & $\bigO{kd^3}$ & & No \\
Shared Memory & & $\bigO{knd/p}$ & & $\bigO{knd}$ & & $\bigO{kd^2}$ & & $\bigO{kd^3}$  & & No \\
Distributed Memory & & $\bigO{knd/p^2}$ & & $\bigO{knd/p}$ & & $\bigO{kd^2}$ & & $\bigO{kd^3}$ & & Yes \\ \bottomrule
\end{tabular}
\caption{A summary of the time and total computational cost (effort) incurred by \cref{alg: rank-one RPM full mem} and its parallelized variants. We do not report any advantages that should be exploited when $A$ or $w$ are sparse. In the shared and distributed memory platforms, we assume that there are $p$ processors dedicated to computing $A'w$ and $b'w$, and one processor dedicated to computing the updates. The ``Network'' column refers to whether communication costs over a network are incurred.}
\label{table:full-memory-parallel-comparison}
\end{table}

\subsubsection{Memory-Reduced Procedure} \label{subsection:low-memory}

\begin{algorithm}[!hbt]
\KwData{Initialization $x_0$,  \texttt{RPMStrategy()} for $w_0,w_1,\ldots$, \texttt{TerminationCriteria()},\\ Memory Storage Parameter $m$}
\KwResult{Estimate $\hat{x}$}
\BlankLine
$k,j \leftarrow 0, 0$ \\
$\mathcal{S} \leftarrow \emptyset$ \medskip

\While{TerminationCriteria() == false}{
\tcp{Compute search direction}
Generate $w_k$ \\
$q_k \leftarrow A'w_k$ \\
Compute $u_k$ using \cref{alg:modified-GS} on $q_k$ with vectors in $\mathcal{S}$ \\
\If{$u_k == 0$}{ $k \leftarrow k+1$ \\ continue to next iteration } \medskip

\tcp{Update iterate}
$r_k \leftarrow b'w_k - q_k'x_k$ \\
$x_{k+1} = x_k + u_kr_k/(u_k'q_k)$ \medskip

\tcp{Update Memory Storage}
$z_{k+1} \leftarrow u_k/\norm{u_k}$ \\  
\uIf{$j == m$}{
Remove $z_{k+1-m}$ from $\mathcal{S}$ and append $z_{k+1}$ to $\mathcal{S}$
}\Else{
Append $z_{k+1}$ to $\mathcal{S}$ \\
$j \leftarrow j+1$
} \medskip

\tcp{Update Iteration Counter}
$ k \leftarrow k+1$

}
\Return{$x_{k+1}$}
\caption{Rank-One RPM Method with Partial Orthogonalization \label{alg: rank-one RPM low mem}}
\end{algorithm}

Another notable aspect of \cref{alg: rank-one RPM full mem} (and its aforementioned parallel variants described above) is that it must store and manipulate the matrix $S_k$ at each iteration, which is clearly expensive when $d$ is large or is excessive when $d^3$ is comparable to $n$ or greater than $n$. This difficulty motivates a partial orthogonalization approach, as described in \cref{alg: rank-one RPM low mem}. In this approach, a user-defined parameter $m < d$ specifies the number of $d$-dimensional vectors needed to implicitly store an approximate representation of $S_k$ (based on \cref{theorem: S are orthogonal projections}). With this implicit representation, the cost of computing $u_k$ reduces to $\bigO{md}$,\footnote{If $q_k$ replace $u_k$ in the calculation of $z_k$, then the cost of computing $u_k$ is $\bigO{dm^2}$ \citep[see][Ch. 5.2]{golub2012}.} which, consequently, reduces the overall cost of updating $x_{k}$ to $x_{k+1}$ to $\bigO{md}$. Moreover, because $S_k$ is implicitly represented by a $m$ $d-$dimesnional vectors in $\mathcal{S}$, there is no notable additional computational cost incurred for updating $S_{k}$ to $S_{k+1}$. Thus, an entire iteration incurs a computational cost $\bigO{md}$ plus the cost of computing $\begin{bmatrix} A & b \end{bmatrix}'w_k$, which can be mollified under the strategies above in shared memory or distributed memory platforms.

\begin{remark} 
\cref{alg: rank-one RPM low mem} is an efficient implementation of the partial orthogonalization procedure and, as a result, at $m=0$, seems to only recover row-action base randomized iterative methods as specified by \cref{eqn:no-memory-iteration}. A less efficient algorithm based on directly applying \cref{eqn: gain rank-one update,eqn: rank-one update-matrix} with the appropriate low memory modification would recover all rank-one base randomized iterative methods when $m=0$.
\end{remark}

\begin{algorithm}[hbt]
\KwData{Vector $q_k$,  Orthonormal Set $\lbrace z_1,\ldots,z_{k-1} \rbrace$}
\KwResult{Projection $q_k$ onto subspace orthogonal to $\lbrace z_1,\ldots,z_{k-1} \rbrace$}
\BlankLine
$j \leftarrow 0$ \\
$t_0 \leftarrow q_k$ \\

\While{j $\leq k-1$}{
$j \leftarrow j+1$\\
$t_{j} \leftarrow t_{j-1} - (z_j't_{j-1}) z_j$
}
\Return{$t_{k}$}
\caption{Modified Gram-Schmidt \label{alg:modified-GS}}
\end{algorithm}

\subsubsection{Optimizing Communication Overhead. Structured Systems} \label{subsubsection:structured}

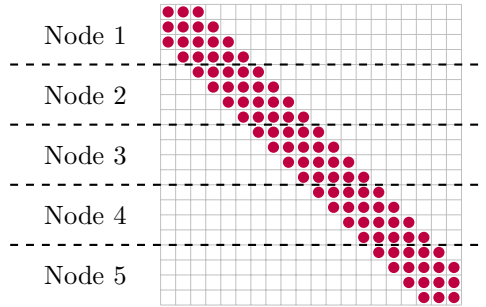
\begin{figure}[hbt]
\centering
\begin{tikzpicture}
%Define dimensions of the banded matrix
\def\q{2};
\def\dim{4};

%Draw Grid
\draw[step=0.2cm,gray,very thin, opacity=0.5] (0,0) grid (\dim,\dim);

%Draw Sparsity Pattern
\pgfmathparse{\dim/0.2-1};
\let\size\pgfmathresult;

\foreach \i in {0,...,\size}{
\foreach \j in {0,...,\size}{
	\pgfmathparse{abs(\i - \j)};
	%See if within bandwidth, units do not matter
	\ifdim \pgfmathresult cm > \q cm \relax 
	\else {
		%Get X position of the point
		\pgfmathparse{\i*0.2+0.1};
		\let\x\pgfmathresult;
		
		%Get Y position of the poin
		\pgfmathparse{\dim-\j*0.2-0.1};
		\let\y\pgfmathresult;
		\node at (\x,\y) [circle, fill=purple, inner sep=1.5pt] {};
	}\fi;

};
};

%Specify Compute Nodes
\def\cpu{5};

%Draw Divisions
\foreach \p in {2,...,\cpu}{
	%Draw Node Separator
	\pgfmathparse{\dim/(\cpu)*(\p-1)};
	\let\y\pgfmathresult;
	\draw[dashed,thick] (-2,\y) -- (4.5,\y);
};

%Label Divisions

\foreach \p in {1,...,\cpu}{
	\pgfmathparse{\dim - \dim/(\cpu)*(\p-0.5)};
	\let\y\pgfmathresult;
	\node at (-1,\y) {Node \p};
};
\end{tikzpicture}
\caption{A representation of a $20 \times 20$ banded matrix with bandwidth $\tilde{Q} +1 = 5$, whose rows are split across five compute nodes (represented by the dashed line). Note, the empty grid points represent zeros, while the filled grid points represent nonzero values.}
\label{figure:tikz-banded-matrix}
\end{figure}

In the above approaches, we take for granted that $d$ is not so large such that communicating $\bigO{d}$ vectors is acceptable during the procedure. However, for many problems coming from the solution of differential equations \citep[e.g., see][]{dongarra1986}, $d$ and $n$ are of the same order and are so large that communicating $\bigO{d}$ vectors at arbitrary points during the procedure is impossible. Fortunately, linear system problems in this class are highly sparse and structured \citep[][Ch. 2]{saad2003}. A simple example is the case where $A$ is a square, banded system with nonzero bandwidth $\tilde{Q}+1$ for some $\tilde{Q} \ll n = d$; that is, $A_{ij} = 0$ if $|i-j| > \tilde{Q}$ and the remaining $A_{ij}$ can take arbitrary values. 

For such sparse and structured problems, our methodology can be efficiently implemented across a distributed memory platform with $p$ processors under some additional qualifications. However, to understand these qualifications, let us first introduce some notation and concepts that define the communication pattern across the $p$ nodes. 

Suppose somehow that we distribute the equations of our linear system of interest across $p$ nodes. \cref{figure:tikz-banded-matrix} shows how the coefficient matrix of a $20 \times 20$ banded system with bandwidth $5$ can be distributed across five nodes. Note, in this example, the entries of the constant vector would be stored on the same processor as the corresponding rows of the coefficient matrix. 
Moreover, we need a way of tracking which components of $x$ are manipulated by each node: 
let $\mathcal{X}_i$ be the set of indices of the components of $x$ with nonzero coefficients at node $i$ in the distributed system for $i=1,\ldots,p$.
In our example, $\mathcal{X}_1 = \lbrace 1,\ldots,6 \rbrace$, $\mathcal{X}_2 = \lbrace 3,\ldots,10 \rbrace$, $\mathcal{X}_3 = \lbrace 7,\ldots,14 \rbrace$, $\mathcal{X}_4 = \lbrace 11,\ldots,18 \rbrace$, and $\mathcal{X}_5 = \lbrace 15,\ldots,20 \rbrace$. 
Finally, for any vector $z$ and any set $\mathcal{X}$ over the indices of $z$, let $z[\mathcal{X}]$ be the vector whose elements are the elements of $z$ indexed by $\mathcal{X}$.

From this example and from our discussion in \cref{subsubsection:parallel} of distributing the \texttt{RPMStrategy()}, we can use the local rows of $A$ at Node 1 and a Gaussian sketch to generate a $q_1 \in \mathbb{R}^d$ such that $q_1[\lbrace 1,\ldots, 6 \rbrace]$ are arbitrarily valued and $q_1[\lbrace 7,\ldots,20 \rbrace] = 0$. Thus, our vector $q_k$ is highly sparse and can be generated locally on the node. However, following \cref{alg: rank-one RPM full mem}, the next step of computing $u_k$ requires computing the product between $S_k$ and $q_k$, which, in a naive implementation, would require storing a dense $d \times d$ matrix $S_k$ and computing a global matrix-vector product. Such a required computation raises several concerns, which we detail and address in the following enumeration.
\begin{enumerate}
\item Given that $d$ is relatively large to the computing environment, is storing a $d \times d$ matrix even feasible? Generally, the answer will be that storing such a matrix is infeasible. However, by exploiting the properties of $S_k$ (see \cref{theorem: S are orthogonal projections}), we will approximately and implicitly store $S_k$ as $\mathcal{S}$, which is a collection of orthonormal vectors. 
\item Even if we use $\mathcal{S}$ in place of $S_k$, will the resulting implicit matrix-vector product and update of $\mathcal{S}$ incur prohibitive communication costs? To answer these questions completely, we will need to specify how the implicit matrix-vector product will be computed and how $\mathcal{S}$ will be stored. Here, we will compute the implicit matrix-vector product by using twice-iterated classical Gram-Schmidt (\cref{alg:iterated-GS}), which was shown to be numerically stable in the seminal work of \cite{giraud2005}. Owing to this calculation pattern, we can store $\mathcal{S}$ in a distributed fashion across the $p$ processors, which we detail below along with the communication cost of the synchronization of $\mathcal{S}$. 
\end{enumerate}

\begin{algorithm}[hbt]
\KwData{Vector $q_k$,  Orthonormal Set $\lbrace z_1,\ldots,z_{k-1} \rbrace$}
\KwResult{Projection $q_k$ onto subspace orthogonal to $\lbrace z_1,\ldots,z_{k-1} \rbrace$}
\BlankLine

$t_0 \leftarrow q_k$ \\

\For{$j=0:1$}{
\tcp{Compute projection onto Orthogonal Set}
\tcp{$\mathcal{P}$ is a set of vectors}
$\mathcal{P} \leftarrow \mathrm{map}(l \mapsto (t_j'z_l) z_l, l=1:k-1)$\\ \medskip

\tcp{Compute orthogonal component}
\tcp{$\mathrm{sum}$ sums over the set $\mathcal{P}$}
$t_{j+1} \leftarrow t_j  - \mathrm{sum}(\mathcal{P})$ \\

}

\Return{$t_{2}$}
\caption{Twice Iterated Gram-Schmidt \label{alg:iterated-GS}}
\end{algorithm}

To understand the costs associated with computing $u$ from the orthonormal vectors in $\mathcal{S}$ and the vector $q$, we will characterize  the support of $u$ (i.e., index set of its nonzero entries). 

\begin{lemma} \label{lemma:support-GS}
Let $q \in \mathbb{R}^d$ and let $\mathcal{Q} = \lbrace i : q[i] \neq 0 \rbrace \subset \lbrace 1,\ldots, d \rbrace$. Let $\lbrace z_1,\ldots,z_m \rbrace \subset \mathbb{R}^d$ be a set of orthonormal vectors (hence, $m \leq d$), and let $\mathcal{Z}_j = \lbrace i : z_j[i] \neq 0 \rbrace \subset \lbrace 1,\ldots, d \rbrace$ for $j=1,\ldots,m$. If $u$ denotes the result of \cref{alg:iterated-GS} applied to $q$ over the set $\lbrace z_1,\ldots,z_m \rbrace$ then 
\begin{equation}
\mathcal{U} := \lbrace i : u[i] \neq 0 \rbrace \subset \left(\bigcup_{j \in \mathfrak{Q}} \mathcal{Z}_j \right) \cup \mathcal{Q},
\end{equation}
where $\mathfrak{Q} = \lbrace j : \mathcal{Q} \cap \mathcal{Z}_j \neq \emptyset \rbrace \subset \lbrace 1,\ldots,m \rbrace$.
\end{lemma}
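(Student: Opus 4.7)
The plan is to track the supports of the intermediate vectors produced by \cref{alg:iterated-GS} and then exploit orthonormality to show that, in exact arithmetic, the second pass has no effect.

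First, I would analyze the vector $t_1$ produced at the end of the first iteration, which can be written as
\begin{equation*}
t_1 = q - \sum_{l=1}^{m} (q' z_l)\, z_l.
\end{equation*}
The key observation is that $q' z_l$ can only be nonzero when the supports overlap, that is, when $\mathcal{Q} \cap \mathcal{Z}_l \neq \emptyset$, which by definition of $\mathfrak{Q}$ means $l \in \mathfrak{Q}$. The sum can therefore be restricted to indices in $\mathfrak{Q}$. Applying the elementary inclusion $\mathrm{supp}(\alpha + \beta) \subseteq \mathrm{supp}(\alpha) \cup \mathrm{supp}(\beta)$ termwise, this yields
\begin{equation*}
\mathrm{supp}(t_1) \subseteq \mathcal{Q} \cup \bigcup_{l \in \mathfrak{Q}} \mathcal{Z}_l.
\end{equation*}

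Second, I would use the orthonormality of $\lbrace z_1, \ldots, z_m \rbrace$ to show that the second pass returns its input unchanged. For each $k \in \lbrace 1,\ldots,m \rbrace$,
\begin{equation*}
z_k' t_1 = z_k' q - \sum_{l=1}^{m} (q' z_l)(z_k' z_l) = q' z_k - q' z_k = 0,
\end{equation*}
so every projection coefficient computed during the second iteration vanishes, and therefore $u = t_2 = t_1$. Combined with the first step, this establishes the stated inclusion.

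The argument is essentially bookkeeping and I do not anticipate a genuine obstacle. The one point worth flagging is that the analysis is carried out in exact arithmetic, which is appropriate because the lemma is a structural sparsity statement about which coordinates can be nonzero rather than a numerical one. In particular, the fact that \cref{alg:iterated-GS} iterates twice rather than once is irrelevant for the support bound, even though it is what underlies the numerical-stability guarantee cited from \cite{giraud2005}.
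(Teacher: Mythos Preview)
Your proposal is correct and follows essentially the same route as the paper. The paper reduces the twice-iterated step to a single classical Gram--Schmidt pass via the matrix identity $(I_d - ZZ')^2 = I_d - ZZ'$ (which is your orthonormality computation in disguise), then restricts the sum to $j\in\mathfrak{Q}$ exactly as you do and finishes with a short contradiction where you instead use the direct support-union inclusion; these are cosmetic differences only.
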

\begin{proof}
Letting $Z$ denote the matrix whose columns are elements of the orthonormal set, we recall that classical Gram-Schmidt generates $u = (I_d - ZZ')q$. Thus, twice iterated Gram-Schmidt can be written as
\begin{equation}
(I_d - ZZ')(I_d - ZZ')q = (I_d - 2ZZ' + ZZ') q = (I_d - ZZ')q = u,
\end{equation}
which is expected in exact arithmetic. Thus, we can consider classical Gram-Schmidt and ignore the iteration to compute the support of $u$. For any $l = 1,\ldots,d$,
\begin{equation} \label{eqn:GS-component}
u[l] = q[l] - \sum_{j=1}^k (q' z_j) z_j[l] = q[l] - \sum_{j \in \mathfrak{Q}} (q'z_j) z_j[l],
\end{equation}
where we use the fact that if $j \not\in \mathfrak{Q}$ then $q' z_j = \sum_{l \in \mathcal{Q} \cap \mathcal{Z}_j } q[l] z_j[l] = \sum_{l \in \emptyset} q[l] z_j[l] = 0$. 
For a contradiction, suppose $l \in \mathcal{U}$ such that 
\begin{equation}
l \not\in \left(\bigcup_{j \in \mathfrak{Q}} \mathcal{Z}_j \right) \cup \mathcal{Q}.
\end{equation}
Then, $q[l] = 0$ and $z_j[l] = 0$ for $j \in \mathfrak{Q}$. Using the above formula for $u[l]$, $u[l] = 0 - \sum_{j \in \mathfrak{Q}} (q'z_j) 0 = 0$, 
which is a contradiction. 
\end{proof}

At iteration $k$, \cref{lemma:support-GS} states that the support of $u_k$ will depend on the support of $\lbrace z_m,\ldots,z_1 \rbrace$, which, in turn, has elements whose support depend on (a subset of) $\lbrace u_{k-1},\ldots,u_0 \rbrace$. Moreover, if $\lbrace u_{k-1},\ldots,u_0 \rbrace$ has elements whose combined support cover $\lbrace 1,\ldots,d \rbrace$, which will be necessary to solve the system,\footnote{Note, if the combined supports of the elements of $\lbrace u_{k-1},\ldots,u_0 \rbrace$ do not cover all of $\lbrace 1,\ldots,d\rbrace$, then some components of our iterates, $\lbrace x_k \rbrace$ will not be updated.} it is possible that the support of $u_k$ will be all of $\lbrace 1,\ldots,d \rbrace$ (ignoring any trivial independence in the system). Thus, it appears that we will eventually have to store vectors in $\mathcal{S}$ whose support is all of $\lbrace 1,\ldots, d \rbrace$. Naively, we may think that we need a faithful copy of $\mathcal{S}$ at each node in the system, which incurs prohibitive communication costs as the support of $u_k$ tends to $\lbrace 1,\ldots, d \rbrace$. While this is true, a careful inspection of Gram-Schmidt and the nonzero patterns of $q_k$ suggest a less naive approach, which we now detail.

We begin by supposing that on a processor $i \in \lbrace 1,\ldots, p \rbrace$, only $z_j[\mathcal{X}_{i}]$ are stored on the node for every $j=1,\ldots,k$. Immediately, we have eliminated the need for synchronizing all of $\mathcal{S}$ on each processor. Instead, we need only to synchronize those components of $z_j$ in $\mathcal{X}_i \cap \mathcal{X}_j$ for all $i \neq j$. Thus, we have that our synchronization costs will depend on the maximum overlap, $Q$, between two processors, which, formally, is 
\begin{equation}
Q = \max_{i \neq j} | \mathcal{X}_i \cap \mathcal{X}_j|.
\end{equation}

Now, we can understand the precise nature of this synchronization by inspecting \cref{alg:iterated-GS}. If for some $j = 1,\ldots,p$, $q_k[\mathcal{X}_j^c] = 0$, then 
\begin{equation} \label{eqn:IGS-loop-1}
t_1[l] = \begin{cases}
- \sum_{t=1}^{m} \left( \sum_{r \in \mathcal{X}_j} q_k[r] z_t[r] \right) z_t[l] & \forall l \in \mathcal{X}_j^c \\
q_k[l] - \sum_{t=1}^{m} \left( \sum_{r \in \mathcal{X}_j} q_k[r] z_t[r] \right) z_t[l] & \forall l \in \mathcal{X}_j 
\end{cases}
\end{equation}
From \cref{eqn:IGS-loop-1}, we see that we must communicate the values of $q[l]$ to all nodes $i \in \lbrace 1,\ldots,p \rbrace \setminus \lbrace j \rbrace$ such that $\mathcal{X}_j \cap \mathcal{X}_i \neq \emptyset$, and we must communicate the $m$ inner products to all $p-1$ nodes. The resulting number of floating point values that must be communicated (counting each replicate to a node individually) during the first iteration of \cref{alg:iterated-GS} is
\begin{equation} 
\sum_{i \in \mathfrak{Q}_j\setminus\lbrace j \rbrace} |\mathcal{X}_j \cap \mathcal{X}_i| + m(p-1),
\end{equation}
where $\mathfrak{Q}_j = \lbrace i : \mathcal{X}_i \cap \mathcal{X}_j \neq \emptyset \rbrace$ for $j=1,\ldots,p$ (see the notation in \cref{lemma:support-GS}). For the second iteration of \cref{alg:iterated-GS}, we must broadcast $m$ inner products that are partially computed (using some ordering that respects the non-associative property of floating point complexity) on each node to the remaining $p-1$ nodes. Thus, the number of floating point values that must be communicated (counting each replicate to a processor individually) to ensure synchronization is
\begin{equation} \label{eqn:communication-complexity}
\sum_{i \in \mathfrak{Q}_j\setminus\lbrace j \rbrace} |\mathcal{X}_j \cap \mathcal{X}_i| + m(p-1) + mp(p-1),
\end{equation}
which we can bound by
\begin{equation}
Q (F-1) + m(p^2-1), \text{ where } F = \max_{j} |\mathfrak{Q}_j|.
\end{equation}
Noting that $Q$ represents the maximum shared indices between two nodes and that $F$ represents the maximum number of nodes that overlap, the first term in the bound can be controlled by the ordering choice of the differential equations that generate the system, but a discussion of this topic is beyond the scope of this work. \cref{alg: rank-one RPM low com} summarizes a simple version of the procedure described here. We can also modify this algorithm to the low memory context of \cref{alg: rank-one RPM full mem} by limiting the number of vectors that can be stored in $\mathcal{S}$.

\begin{algorithm}[!hbt]
\KwData{Initialization $x_0$, Distributed \texttt{RPMStrategy()} for $\lbrace w_k \rbrace$, Covering $\lbrace j_k \rbrace \subset \lbrace 1,\ldots,p\rbrace$, \texttt{TerminationCriteria()}, memory storage parameter $m$}
\KwResult{Estimate $\hat{x}$}
\BlankLine
$k,j \leftarrow 0, 0$ \\
$\mathcal{S} \leftarrow \emptyset$ \medskip

\While{TerminationCriteria() == false}{

\For{Node $j_k$}{
\tcp{Compute search direction on node $j_k$}
Generate $w_k$ from distributed \texttt{RPMStrategy()} \\
$q_k \leftarrow A'w_k$ \\
$r_k \leftarrow b'w_k - q_k[\mathcal{X}_{j_k}]'x_k[\mathcal{X}_{j_k}]$ \\ \medskip 

\tcp{First Gram-Schmidt Iteration, note $q_k[\mathcal{X}_{j_k}^c] = 0$}
$\mathcal{I} \leftarrow  \mathrm{map}(z \to q_k[\mathcal{X}_{j_k}]'z[\mathcal{X}_{j_k}], z \in \mathcal{S})$ \\
Communicate inner products in $\mathcal{I}$ to remaining $p-1$ nodes. \\
Communicate $q_k[\mathcal{X}_{j_k} \cap \mathcal{X}_i]$ for $i \neq j_k$. \\
\If{$u_k == 0$}{ $k \leftarrow k+1$ \\ continue to next iteration }
} \medskip

\For{Each Node $j$}{
Compute $t_1[\mathcal{X}_j]$ locally from \cref{eqn:IGS-loop-1} \\ \medskip 

\tcp{Second Gram-Schmidt Iteration} 
$\mathcal{I}_j \leftarrow \mathrm{map}(z \to t_1[\mathcal{X}_j]'z[\mathcal{X}_{j}], z \in \mathcal{S})$ \\
Synchronize inner products in $\mathcal{I}_j$ to remaining $p-1$ nodes for $j=1,\ldots,p$. \\
$u_k[\mathcal{X}_j] \leftarrow t_1[\mathcal{X}_j] - \sum_{z \in \mathcal{S}} \left( \sum_{l=1}^p  t_1[\mathcal{X}_l]'z[\mathcal{X}_l] \right) z[\mathcal{X}_j]$ \\ \medskip 

\tcp{Synchronize and Update $\mathcal{S}$}
Synchronize $\norm{ u_k}_2$ from local computation \\
Locally store $z_{k+1}[\mathcal{X}_j] = u_k[\mathcal{X}_j]/\norm{u_k}_2$ in $\mathcal{S}$. 
} \medskip 

\For{Node $j_k$}{
\tcp{Compute Step Size}
Synchronize $\alpha_k \leftarrow r_k/(u_k[\mathcal{X}_{j_k}]' q_k[\mathcal{X}_{j_k}])$
} \medskip 

\For{Each Node $j$}{
\tcp{Update Iterate}
$x_{k+1}[\mathcal{X}_j] \leftarrow x_k[\mathcal{X}_j] + \alpha_k u_k[\mathcal{X}_j]$
}

\tcp{Update Iteration Counter}
$ k \leftarrow k+1$
}
\Return{$x_{k+1}$}
\caption{Rank-One RPM Method for Limited Communication \label{alg: rank-one RPM low com}}
\end{algorithm}

\section{Convergence Theory for Orthogonalization} \label{section:full-memory}
Here, we prove that the complete orthogonalization approach (i.e., \cref{alg: rank-one RPM full mem}) converges to the solution under a variety of sampling RPM strategies.
In \cref{subsection:core-results-full-memory}, we establish a collection of core results that are useful in characterizing the behavior of our procedure. 
A key feature of these core results is that they will rely on a stopping time $T$, which will depend on the random variables $\lbrace w_k \rbrace$.
Therefore, in \cref{subsection:sampling-times}, we characterize $T$ under common probabilistic relationships between the elements of $\lbrace w_k \rbrace$.
All statements hold with probability one unless stated otherwise.

\subsection{Core Results} \label{subsection:core-results-full-memory}

We establish two key results. First, we establish that our procedure is an orthogonalization procedure: that is, the matrices $\lbrace S_k \rbrace$ project the current search direction onto a subspace that is orthogonal to previous search directions. Second, we characterize the limit point of our iterates, $\lbrace x_k \rbrace$, in terms of a true solution of the linear system and the subspace generated by the rank-one RPMs, $\lbrace V_k \rbrace$.

\begin{theorem} \label{theorem: S are orthogonal projections}
Let $\lbrace w_l : l + 1 \in \mathbb{N} \rbrace \subset \mathbb{R}^n$ be an arbitrary sequence in $\mathbb{R}^n$, and let $\mathcal{R}_0 = \lbrace 0 \rbrace \subset \mathbb{R}^d$ and $\mathcal{R}_{l} = \linspan{A'w_0,\ldots,A'w_{l-1}}$ for $l \in \mathbb{N}$. Now, let $S_0 = I_d$ and $\lbrace S_l : l \in \mathbb{N} \rbrace$ be defined recursively as in \cref{eqn: rank-one update-matrix}. Then, for $l \geq 0$, $S_l$ is an orthogonal projection matrix onto $\mathcal{R}_l^\perp$. 
\end{theorem}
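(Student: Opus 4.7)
The plan is induction on $l$. The base case $l=0$ is immediate because $S_0 = I_d$ is the orthogonal projector onto $\mathcal{R}_0^\perp = \mathbb{R}^d$. For the inductive step, assume that $S_k$ is the orthogonal projector onto $\mathcal{R}_k^\perp$, so that $S_k = S_k' = S_k^2$ and $\range{S_k} = \mathcal{R}_k^\perp$, $\nullsp{S_k} = \mathcal{R}_k$. I would split according to the two branches of the recursion \cref{eqn: rank-one update-matrix}.

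In the trivial branch, $S_k A' w_k = 0$. Then $A' w_k \in \nullsp{S_k} = \mathcal{R}_k$, so $\mathcal{R}_{k+1} = \mathcal{R}_k + \linspan{A'w_k} = \mathcal{R}_k$, hence $\mathcal{R}_{k+1}^\perp = \mathcal{R}_k^\perp$, and the claim follows since $S_{k+1} = S_k$. In the nontrivial branch, define $p_k := S_k A' w_k \neq 0$. Using $S_k = S_k' = S_k^2$, I would first simplify the denominator as $w_k' A S_k A' w_k = (S_k A' w_k)'(S_k A' w_k) = \norm{p_k}_2^2$ and the numerator matrix as $S_k A' w_k w_k' A S_k = p_k p_k'$. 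Thus the update reduces to the clean form
\begin{equation*}
S_{k+1} = S_k - \frac{p_k p_k'}{\norm{p_k}_2^2}.
\end{equation*}

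Symmetry of $S_{k+1}$ is immediate. For idempotence, I would use $S_k p_k = S_k^2 A' w_k = S_k A' w_k = p_k$ to verify that the cross term and the quadratic term in $S_{k+1}^2$ combine to cancel, giving $S_{k+1}^2 = S_{k+1}$. It remains to identify $\nullsp{S_{k+1}} = \mathcal{R}_{k+1}$. One direction is a direct check: for any $x \in \mathcal{R}_k$, $S_k x = 0$ and $p_k'x = w_k' A S_k x = 0$, so $S_{k+1} x = 0$; and $S_{k+1}(A' w_k) = p_k - p_k \norm{p_k}_2^2/\norm{p_k}_2^2 = 0$. Hence $\mathcal{R}_{k+1} \subseteq \nullsp{S_{k+1}}$.

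For the reverse inclusion, the cleanest path is a dimension/range argument. Since $p_k \in \range{S_k} = \mathcal{R}_k^\perp$ and $p_k p_k'/\norm{p_k}_2^2$ is the rank-one orthogonal projector onto $\linspan{p_k} \subset \mathcal{R}_k^\perp$, the difference $S_{k+1} = S_k - p_k p_k'/\norm{p_k}_2^2$ is the orthogonal projector onto $\mathcal{R}_k^\perp \cap \linspan{p_k}^\perp$. Finally, for any $x \in \mathcal{R}_k^\perp$ we have $S_k x = x$, so $p_k' x = w_k' A S_k x = w_k' A x = (A'w_k)'x$; consequently $\linspan{p_k}^\perp \cap \mathcal{R}_k^\perp = \linspan{A'w_k}^\perp \cap \mathcal{R}_k^\perp = \mathcal{R}_{k+1}^\perp$, which closes the induction. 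The main obstacle I anticipate is this last identification of the projection subspace; the algebra collapses nicely only after exploiting $S_k^2 = S_k$ to rewrite $p_k'x$ in terms of $A'w_k$ on the invariant subspace $\mathcal{R}_k^\perp$.
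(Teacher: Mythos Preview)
Your proof is correct and follows the same inductive skeleton as the paper: verify the base case, split on the two branches of \cref{eqn: rank-one update-matrix}, and in the nontrivial branch check symmetry, idempotence, and the range identification. The difference is only in the last step. The paper takes an arbitrary $v$ in the range of $S_{l+1}$, decomposes it as $v = u + y$ with $y \in \mathcal{R}_{l+1}$, further splits $y$ into its $\mathcal{R}_l$ and $\mathcal{R}_l^\perp \cap \mathcal{R}_{l+1}$ components, and computes directly that $S_{l+1} y = 0$. You instead rewrite the update as $S_{k+1} = S_k - p_k p_k'/\norm{p_k}_2^2$ and invoke the standard fact that a difference of nested orthogonal projectors projects onto $\range{S_k} \cap \linspan{p_k}^\perp$, then identify that intersection with $\mathcal{R}_{k+1}^\perp$ via the observation that $p_k'x = (A'w_k)'x$ for $x \in \mathcal{R}_k^\perp$. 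Your route is a bit more conceptual and avoids the two-layer decomposition of $y$; the paper's is more self-contained in that it does not appeal to the projector-difference fact. Either way the argument is short, and your simplification of the denominator to $\norm{p_k}_2^2$ via $S_k^2 = S_k$ is a nice touch that the paper does not make explicit.
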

\begin{proof}
We will prove the result by induction. For the base case, $l=0$, $S_0 = I_d$. It follows that $S_0$ is an orthogonal projection onto $\mathcal{R}_0^\perp = \mathbb{R}^d$ since $S_0^2 = I_d^2 = I_d = S_0$ and $\range{I_d} = \mathbb{R}^d$.
Now suppose that the result holds for $l > 0$. If $S_l A'w_l = 0$ then there is nothing to show. Therefore, for the remainder of this proof, suppose $S_l A' w_l \neq 0$. 

First, we show that $S_{l+1}$ is a projection matrix by verifying that $S_{l+1}^2 = S_{l+1}$ by direct calculation. Making use of the recursive definition of $S_{l+1}$ and the induction hypothesis that $S_l^2 = S_l$, 
\begin{equation}
\begin{aligned}
S_{l+1}^2 &= \left( S_l - \frac{S_l A' w_l w_l' A S_l}{w_l'A S_l A' w_l}\right) \left( S_l - \frac{S_l A' w_l w_l'A S_l}{w_l'A S_l A' w_l}\right) \\
		  &= \left( S_l - \frac{S_l A' w_l w_l' A S_l}{w_l'A S_l A' w_l}\right)\left( I_d - \frac{A'w_lw_l'A S_l}{w_l'A S_l A' w_l} \right) \\
		  &= S_l - 2\frac{S_l A' w_l w_l' A S_l}{w_l'A S_l A' w_l} + \frac{S_l A' w_l w_l' A S_l}{w_l'A S_l A' w_l} = S_{l+1}.
\end{aligned}
\end{equation}

Second, we use the fact that a projection is orthogonal if and only if it is self-adjoint to show that $S_{l+1}$ is an orthogonal projection. By induction, because $S_l$ is an orthogonal projection, $S_l' = S_l$, and so
\begin{equation}
S_{l+1}' = S_l' - \frac{S_l A' w_l w_l' A S_l}{w_l'A S_l A' w_l} = S_{l+1}.
\end{equation}

Finally, let $v$ be in the range of $S_{l+1}$ and we can decompose $v$ into the components $u$ and $y$ such that $v = u+y$, $0 = u'y$ and $y \in \mathcal{R}_{l+1}$. We will show that $y = 0$, which characterizes the range of $S_{l+1}$ as being all vectors orthogonal to $\mathcal{R}_{l+1}$. To show this note that because $S_{l+1}$ is a projection matrix, we have that 
\begin{equation} \label{eqn-proof-S-ortho:1}
u + y = v = S_{l+1} v = S_{l+1}u + S_{l+1} y.
\end{equation}
By construction $\mathcal{R}_l \subset \mathcal{R}_{l+1}$ and so $u \in \mathcal{R}_l^{\perp}$. Using the induction hypothesis, we then have that $S_l u = u$. Moreover, because $u \in \mathcal{R}_{l+1}^\perp$ by construction, $u'A'w_l = 0$. Then, using the recursive definition of $S_{l+1}$, we have that
\begin{equation}
S_{l+1} u = S_l u - \frac{S_l A' w_l w_l' A S_lu}{w_l'A S_l A'w_l} = u - \frac{S_l A' w_l w_l' A u}{w_l'A S_l A'w_l} = u.
\end{equation}
Therefore, $u = S_{l+1}u$ and, by \cref{eqn-proof-S-ortho:1}, $y = S_{l+1} y$. We now decompose $y$ into $y_1$ and $y_2$ where $y_1 \in \mathcal{R}_l$ and $y_2 \in \mathcal{R}_l^\perp \cap \mathcal{R}_{l+1}$. By the induction hypothesis, $\mathcal{R}_l^\perp \cap \mathcal{R}_{l+1} = \linspan{ S_{l} A' w_l }$. Therefore, $S_l y = y_2$ and $\exists \alpha \in \mathbb{R}$ such that $y_2 = \alpha S_l A' w_l$. Finally, using the recursive formulation of $S_{l+1}$ and $S_l y = y = \alpha S_l A' w_l$, 
\begin{equation}
y = S_{l+1} y = S_l y - \frac{S_l A' w_l w_l' A S_l y}{w_l' A S_l A' w_l} = \alpha S_l A' w_l  - \alpha S_l A' w_l = 0.
\end{equation}
Thus, we have shown that the range of $S_{l+1}$ is orthogonal to $\mathcal{R}_{l+1}$. 
\end{proof}

From \cref{theorem: S are orthogonal projections}, we see that our procedure is an orthogonalization procedure just like quasi-Newton methods \citep[][Ch. 8]{nocedal2006} and conjugated direction methods \citep{hestenes2012}. As a consequence, we have the following common and insightful characterization of the iterates of such an orthogonalization procedure.

\begin{corollary} \label{corollary: orthogonalization-iterate-subspace}
In addition to the setting of \cref{theorem: S are orthogonal projections}, let $x_0 \in \mathbb{R}^d$ be arbitrary and let $\lbrace x_{l} : l \in \mathbb{N} \rbrace$ be defined according to \cref{eqn: rank-one update-param}. For any $l \geq 0$, $x_{l+1} \in \linspan{x_0,A'w_0,\ldots,A'w_l}$.
\end{corollary}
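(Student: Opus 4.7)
The plan is to prove this by induction on $l$, with the main work being a clean description of $S_l A'w_l$ via \cref{theorem: S are orthogonal projections}.

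For the base case $l=0$, I would substitute $S_0 = I_d$ directly into \cref{eqn: rank-one update-param}. If $S_0 A'w_0 = A'w_0 = 0$, then $x_1 = x_0 \in \linspan{x_0}$, and we are done. Otherwise, the update reduces to $x_1 = x_0 + \alpha_0 A'w_0$ for the scalar $\alpha_0 = w_0'(b-Ax_0)/(w_0'AA'w_0)$, which is evidently in $\linspan{x_0, A'w_0}$.

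For the inductive step, I would assume $x_l \in \linspan{x_0, A'w_0, \ldots, A'w_{l-1}} \subset \linspan{x_0} + \mathcal{R}_l$ and consider two cases. If $S_l A'w_l = 0$, then by \cref{eqn: rank-one update-param} we have $x_{l+1} = x_l$ and the conclusion follows by the induction hypothesis together with $\mathcal{R}_l \subset \mathcal{R}_{l+1}$. Otherwise, the update gives $x_{l+1} = x_l + \beta_l S_l A'w_l$ for a scalar $\beta_l = w_l'(b - Ax_l)/(w_l' A S_l A' w_l)$. The key step is then to observe, via \cref{theorem: S are orthogonal projections}, that $S_l$ is the orthogonal projector onto $\mathcal{R}_l^\perp$, so we may decompose
\begin{equation*}
S_l A' w_l = A'w_l - (I_d - S_l) A' w_l,
\end{equation*}
where $(I_d - S_l) A'w_l \in \mathcal{R}_l = \linspan{A'w_0, \ldots, A'w_{l-1}}$. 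Hence $S_l A'w_l \in \linspan{A'w_0, \ldots, A'w_l} = \mathcal{R}_{l+1}$. Combining this with the inductive hypothesis for $x_l$ yields $x_{l+1} \in \linspan{x_0, A'w_0, \ldots, A'w_l}$.

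I do not anticipate a substantive obstacle: the only subtlety is remembering to cover the degenerate branch $S_l A'w_l = 0$ in the recursion defining $x_{l+1}$, and to invoke \cref{theorem: S are orthogonal projections} rather than trying to re-derive the span of the range of $S_l$ from scratch. With that theorem in hand, the argument is essentially a two-line unpacking of the orthogonal projection identity.
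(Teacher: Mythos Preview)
Your proof is correct and follows essentially the same approach as the paper's: an induction on $l$ that treats the degenerate branch $S_l A'w_l = 0$ separately and, in the nondegenerate branch, invokes \cref{theorem: S are orthogonal projections} to place $S_l A'w_l$ in $\mathcal{R}_{l+1}$. Your explicit decomposition $S_l A'w_l = A'w_l - (I_d - S_l)A'w_l$ with $(I_d - S_l)A'w_l \in \mathcal{R}_l$ spells out the step the paper leaves implicit, but the argument is the same.
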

\begin{proof}
We again proceed by induction. Because $S_0 = I_d$, the case of $x_1$ follows by recursion formula, \cref{eqn: rank-one update-param}. Now suppose that the result holds up to some $l > 0$. Note, by the recursion formula
\begin{equation}
x_{l+1} = x_l + \gamma S_l A' w_l, \quad \text{where} \quad \gamma = \begin{cases} \frac{w_l'(b - Ax_l)}{w_l' A S_l A'w_l} & S_l A' w_l \neq 0 \\
0 & \text{otherwise.} 
\end{cases}
\end{equation}
Therefore, $x_{l+1} \in \linspan{ x_l, S_l A' w_l}$. Now, using the induction hypothesis, 
\begin{equation}
\linspan{ x_l, S_l A' w_l } \subset \linspan{x_0, A'w_0,\ldots,A'w_{l-1}, S_l A' w_l }.
\end{equation}
Second, when $S_l A' w_l = 0$, then $A'w_l \in \mathcal{R}_{l}$. Consequently,
\begin{equation}
x_{l+1} \in \linspan{x_0, A'w_0,\ldots,A'w_{l-1}} = \linspan{x_0,A'w_0,\ldots,A'w_l}.
\end{equation}
Now suppose $S_l A' w_l \neq 0$. By \cref{theorem: S are orthogonal projections}, $S_l$ is an orthogonal projection onto $\linspan{A'w_0,A'w_1,\ldots,A'w_{l-1}}^{\perp}$. 
%Therefore, by the Gram-Schmidt procedure, $S_l A'w_l \in \linspan{A'w_0,A'w_1,\ldots,A'w_l}$. 
Hence,
$x_{l+1} \in \linspan{ x_l, S_l A' w_l}$, which is contained in $ \linspan{x_0, A'w_0,\ldots,A'w_l}.$
\end{proof}

\Cref{corollary: orthogonalization-iterate-subspace} demonstrates that, as is common with orthogonalization procedures, the iterates are in a subspace generated by the initial iterate and the search directions $\lbrace A'w_0,\ldots,A'w_l \rbrace$. For deterministic procedures, such a characterization is usually sufficient and the next step would be to demonstrate that the iterates are the closest points to the true solutions within the given subspace. However, for a procedure in which the subspace is randomly generated, there is substantially more nuance. In order to be conscientious of space, we will not go through the litany of issues, but rather skip to the appropriate definitions and characterizations.

First, we begin by defining the maximal possible subspace that can be generated by a random quantity $A'w$. Let $w \in \mathbb{R}^n$ be a random variable defined on a space $\Omega$, and let
\begin{equation} \label{eqn: row span}
\mathcal{N}(w) = \linspan{ z \in \mathbb{R}^d: \Prb{z'A'w = 0} = 1} \text{ and } \mathcal{R}(w) = \mathcal{N}(w)^\perp.
\end{equation}
Moreover, we define the subspace $\mathcal{V}(w)$ such that $\mathcal{V}(w) \perp \mathcal{R}(w)$ and $\mathcal{V}(w) + \mathcal{R}(w) = \mathrm{row}(A)$ (hence, $\mathcal{V}(w) \oplus \mathcal{R}(w) = \mathrm{row}(A)$). Correspondingly, let $P_W$ denote the orthogonal projection matrix onto a subspace $W \subset \mathbb{R}^d$. The following result characterizes $\mathcal{R}(w)$.
\begin{lemma} \label{lemma: characterize R_w}
For $\mathcal{R}(w)$ as defined in \cref{eqn: row span}, $\mathcal{R}(w)$ is the smallest subspace of $\mathbb{R}^d$ such that $\Prb{ A'w \in \mathcal{R}(w) } = 1$. 
\end{lemma}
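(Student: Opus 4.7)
The plan is to verify the two defining properties of a minimum: (a) $\mathcal{R}(w)$ itself contains $A'w$ almost surely, and (b) any subspace $W \subset \mathbb{R}^d$ with $\Prb{A'w \in W} = 1$ must contain $\mathcal{R}(w)$. The entire argument reduces to manipulating the defining set in \cref{eqn: row span} together with elementary linear algebra and countable-intersection properties of probability-1 events, so no serious analytic obstacle is expected.

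\emph{Step 1: tighten the definition of $\mathcal{N}(w)$.} I would first observe that the set $\mathcal{Z}(w) := \{ z \in \mathbb{R}^d : \Prb{z'A'w = 0} = 1\}$ is already a linear subspace: if $z_1,z_2 \in \mathcal{Z}(w)$ and $\alpha,\beta \in \mathbb{R}$, then on the intersection of the two probability-one events $\{z_1'A'w = 0\} \cap \{z_2'A'w = 0\}$ (still of probability one) the vector $\alpha z_1 + \beta z_2$ annihilates $A'w$, so $\alpha z_1 + \beta z_2 \in \mathcal{Z}(w)$. Consequently $\mathcal{N}(w) = \mathcal{Z}(w)$, not merely its span. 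This is the key bookkeeping observation that makes both inclusions clean.

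\emph{Step 2: show $\Prb{A'w \in \mathcal{R}(w)} = 1$.} Because $\mathcal{N}(w)$ is a subspace of the finite-dimensional space $\mathbb{R}^d$, pick a basis $z_1,\dots,z_m$ of $\mathcal{N}(w)$. By Step~1 each $z_i$ satisfies $\Prb{z_i'A'w = 0} = 1$, and since a finite intersection of probability-one events has probability one, the event $\bigcap_{i=1}^m \{z_i'A'w = 0\}$ has probability one. On this event $A'w$ is orthogonal to every basis vector of $\mathcal{N}(w)$, hence to all of $\mathcal{N}(w)$, so $A'w \in \mathcal{N}(w)^\perp = \mathcal{R}(w)$.

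\emph{Step 3: minimality.} Suppose $W \subset \mathbb{R}^d$ is any subspace with $\Prb{A'w \in W} = 1$. I would like to show $\mathcal{R}(w) \subset W$, which by taking orthogonal complements is equivalent to $W^\perp \subset \mathcal{N}(w)$. So fix $z \in W^\perp$. Then $\{A'w \in W\} \subset \{z'A'w = 0\}$, so $\Prb{z'A'w = 0} \geq \Prb{A'w \in W} = 1$, which by Step~1 places $z \in \mathcal{Z}(w) = \mathcal{N}(w)$. Hence $W^\perp \subset \mathcal{N}(w)$, and taking orthogonal complements yields $\mathcal{R}(w) \subset W$, completing the characterization.

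The only place to exercise care is Step~2, where one must use finite-dimensionality of $\mathbb{R}^d$ to promote countably many probability-one statements (one per basis vector) to a single probability-one statement about $A'w$; after that point the argument is purely linear-algebraic.
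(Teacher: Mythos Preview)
Your proof is correct and follows essentially the same route as the paper's: both arguments establish (a) $\Prb{A'w \in \mathcal{R}(w)} = 1$ and (b) minimality via orthogonal complements and the defining property of $\mathcal{N}(w)$. Your Step~1 observation that the set $\mathcal{Z}(w)$ is already a subspace is a point the paper uses implicitly but does not spell out, and your direct basis argument in Step~2 and the $W^\perp \subset \mathcal{N}(w)$ argument in Step~3 are slightly cleaner than the paper's contradiction-based phrasing, but the underlying ideas coincide.
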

\begin{proof}
First, we verify that $\Prb{A'w \in \mathcal{R}(w)} = 1$. Suppose that $\Prb{A'w \in \mathcal{R}(w)} < 1$. Then, 
\begin{equation}
\Prb{\exists z \perp \mathcal{R}(w): z'A'w \neq 0} > 0.
\end{equation}
However, we know that for any $z$ such that $z \perp \mathcal{R}(w)$, $z \in \mathcal{N}(w)$ and $z'A'w = 0$ with probability one, which is a contradiction. Hence, $\Prb{A'w \in \mathcal{R}(w)} = 1$. 

Now suppose there is a proper subspace of $\mathcal{R}(w)$, $U$, such that $\Prb{ A'w \in U} = 1$. Let $U^{\perp \mathcal{R}(w)}$ denote the subspace orthogonal to $U$ relative to $\mathcal{R}(w)$. Then, $\Prb{z' A'w = 0 } = 1$ for any $z \in U^{\perp \mathcal{R}(w)}$, which implies that $U^{\perp \mathcal{R}(w)} \subset \mathcal{N}(w)$. However, since $U^{\perp \mathcal{R}(w)} \subset \mathcal{R}(w) \perp \mathcal{N}(w)$, $U^{\perp \mathcal{R}(w)} = \lbrace 0 \rbrace$. Thus, $\mathcal{R}(w)$ is the smallest subspace such that $\Prb{A'w \in \mathcal{R}(w)} = 1$.
\end{proof}

Second, we must define when the maximal possible subspace of $A'w$ can be achieved by a sequence of random variables $\lbrace A'w_0,\ldots,A'w_l \rbrace$, which may or may not be related to $A'w$. Note, by not requiring a relationship between $\lbrace A'w_0,\ldots,A'w_l \rbrace$ and $A'w$ our next result is particularly general and applies to a variety of situations, from the case in which $\lbrace w_l \rbrace$ are independent copies of $w$ to the case where $\lbrace w_l \rbrace$ have complex dependencies. Now, let $\lbrace w_l : l +1 \in \mathbb{N} \rbrace \subset \mathbb{R}^n$ be random variables defined on $\Omega$, and let $T$ be a stopping time defined by
\begin{equation} \label{eqn: stopping time}
T = \min \lbrace k \geq 0 : \linspan{A'w_0,\ldots,A'w_k} \supset \mathcal{R}(w) \rbrace.\footnote{Below we will assume that $A'w \in \mathcal{R}(w)$ with probability one. If we relax this, this will change the results in a predictable manner but will require additional notation. To avoid such notation, we will leave this more general case to future work if there is a sampling case that merits it.}
\end{equation}
Using this notation, we have the following fundamental characterization result of the limit points of $\lbrace x_l \rbrace$. 

\begin{theorem} \label{theorem: terminal iteration characterization}
Let $w$ be a random variable, and let $\mathcal{R}(w)$, $\mathcal{N}(w)$ and $\mathcal{V}(w)$ be as defined above (see \cref{eqn: row span}).
Moreover, let $w_0, w_1,\ldots \in \mathbb{R}^n$ be random variables such that $\Prb{A'w_l \in \mathcal{R}(w)}=1$ for all $l+1 \in \mathbb{N}$, and let $T$ be as defined in \cref{eqn: stopping time}. Let $x_0 \in \mathbb{R}^d$ be arbitrary and $S_0 = I_d$, and let $\lbrace x_l : l \in \mathbb{N} \rbrace$ and $\lbrace S_l : l \in \mathbb{N} \rbrace$ be defined as in \cref{eqn: rank-one update-param,eqn: rank-one update-matrix}. On the event $\lbrace T < \infty \rbrace$, 
\begin{enumerate}
\item For any $s \geq T+1$, $S_{T+1} = S_s$ and $x_{T+1} = x_s$.
\item  If $Ax=b$ admits a solution $x^*$ (not necessarily unique), then 
\begin{equation}
x_{T+1} = P_{\mathcal{N}(w)} x_0 + P_{\mathcal{R}(w)} x^*.
\end{equation}
\end{enumerate}
\end{theorem}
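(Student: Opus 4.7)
For the first claim, I would work deterministically on the probability-one event $\{A'w_l \in \mathcal{R}(w)\ \forall l\}$ intersected with $\{T<\infty\}$. On this event, $\mathcal{R}_{T+1}=\linspan{A'w_0,\ldots,A'w_T}\supset \mathcal{R}(w)$ by definition of $T$, and every subsequent $A'w_s$ for $s\geq T+1$ lies in $\mathcal{R}(w)\subset \mathcal{R}_{T+1}$. By \cref{theorem: S are orthogonal projections}, $S_{T+1}$ is the orthogonal projection onto $\mathcal{R}_{T+1}^\perp$, so $S_{T+1}A'w_{T+1}=0$. The recursions \cref{eqn: rank-one update-matrix,eqn: rank-one update-param} then give $S_{T+2}=S_{T+1}$ and $x_{T+2}=x_{T+1}$; a straightforward induction extends this to every $s\geq T+1$.

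For the second claim, I would decompose $x_{T+1}$ via the orthogonal sum $\mathbb{R}^d = \mathcal{R}(w)\oplus\mathcal{N}(w)$ and handle the two components separately. For the $\mathcal{N}(w)$-component, \cref{corollary: orthogonalization-iterate-subspace} gives $x_{T+1}-x_0\in\linspan{A'w_0,\ldots,A'w_T}\subset\mathcal{R}(w)$, where the last inclusion uses the assumption that each $A'w_l\in\mathcal{R}(w)$ almost surely. Therefore $P_{\mathcal{N}(w)}x_{T+1}=P_{\mathcal{N}(w)}x_0$.

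For the $\mathcal{R}(w)$-component, the key step is to establish the invariant
\[
w_l'Ax_s = w_l'b \qquad \text{for all } s\geq l+1,\ l=0,\ldots,T.
\]
At $s=l+1$, substituting \cref{eqn: rank-one update-param} and using that on the update branch $w_l'AS_lA'w_l \neq 0$, a direct calculation yields $w_l'Ax_{l+1}=w_l'b$ (on the no-update branch the denominator condition forces $w_l'(b-Ax_l)=0$ already, via \cref{theorem: S are orthogonal projections}). To propagate the invariant, note that for $s\geq l+1$, \cref{eqn: rank-one update-param} gives $x_{s+1}-x_s\in\linspan{S_sA'w_s}\subset\mathcal{R}_s^\perp$ by \cref{theorem: S are orthogonal projections}, and $A'w_l\in\mathcal{R}_{l+1}\subset\mathcal{R}_s$ for $s\geq l+1$. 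Hence $w_l'A(x_{s+1}-x_s) = (A'w_l)'(x_{s+1}-x_s)=0$, so the invariant is preserved by induction on $s$.

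Once the invariant is established at $s=T+1$, it reads $(A'w_l)'(x_{T+1}-x^*)=0$ for every $l\leq T$, where we used $Ax^*=b$. Because $\mathcal{R}(w)\subset\linspan{A'w_0,\ldots,A'w_T}$ by definition of $T$, we conclude $x_{T+1}-x^*\perp\mathcal{R}(w)$, i.e.\ $P_{\mathcal{R}(w)}x_{T+1}=P_{\mathcal{R}(w)}x^*$. Adding the two projections gives the stated formula. The main technical obstacle will be the careful verification of the invariant and of the two separate branches of the recursion (the $S_lA'w_l=0$ case) when showing that the residual condition $w_l'Ax_{l+1}=w_l'b$ actually holds at the update step; everything else follows from the orthogonal projection structure already proved in \cref{theorem: S are orthogonal projections} and \cref{corollary: orthogonalization-iterate-subspace}.
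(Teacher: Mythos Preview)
Your argument for part 1 is essentially identical to the paper's.

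For part 2 you take a genuinely different route. The paper proves, by a short induction, the single identity
\[
x_{l+1}-x^* \;=\; S_{l+1}(x_0-x^*)\qquad\text{for all }l\geq 0,
\]
and then evaluates it at $l=T$, where $S_{T+1}=P_{\mathcal{N}(w)}$; the formula drops out in two lines. Your approach instead splits $x_{T+1}$ into its $\mathcal{N}(w)$- and $\mathcal{R}(w)$-components and handles them by separate mechanisms: the increment argument for the first, and the residual invariant $w_l'Ax_s=w_l'b$ for the second. Both approaches are correct, but the paper's identity is cleaner in one concrete way: on the no-update branch $S_lA'w_l=0$ the identity $x_{l+1}-x^*=S_{l+1}(x_0-x^*)$ is \emph{trivially} preserved, whereas your verification that $w_l'(b-Ax_l)=0$ on that branch actually requires the invariant for the earlier indices $j<l$ (write $A'w_l=\sum_{j<l}c_jA'w_j$ and use $w_j'Ax_l=w_j'b$), together with consistency. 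So your induction must be structured on $l$ first, with the propagation in $s$ nested inside; the sketch as written only mentions induction on $s$.

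Two small clean-ups: \cref{corollary: orthogonalization-iterate-subspace} literally asserts $x_{T+1}\in\linspan{x_0,A'w_0,\ldots,A'w_T}$, not $x_{T+1}-x_0\in\linspan{A'w_0,\ldots,A'w_T}$; the latter is what you need and follows directly from telescoping the recursion (each increment is a multiple of $S_kA'w_k$, which lies in $\mathcal{R}(w)$ since $A'w_k\in\mathcal{R}(w)$ and $P_{\mathcal{R}_k}A'w_k\in\mathcal{R}_k\subset\mathcal{R}(w)$). Also note that on $\{T<\infty\}$ one has $\mathcal{R}_{T+1}=\mathcal{R}(w)$ exactly (not just $\supset$), since each $A'w_l\in\mathcal{R}(w)$.
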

\begin{proof}
Recall that $\mathcal{R}_{k+1} = \linspan{A'w_0,\ldots,A'w_{k} }.$ Therefore, by the definition of $T$, $\mathcal{R}_{T+1} = \mathcal{R}(w)$ on the event that $\lbrace T < \infty \rbrace$. Therefore, by \cref{theorem: S are orthogonal projections}, $S_{T+1}$ is an orthogonal projection onto $\mathcal{N}(w)$ and its null space is $\mathcal{R}(w)$. 

We now proceed by induction. Because $\nullsp{S_{T+1}} = \mathcal{R}(w)$ and $A'w_{T+1} \in \mathcal{R}(w)$ with probability one (by hypothesis), $S_{T+1} A' w_{T+1} = 0$. Therefore, by the recursion equations, \cref{eqn: rank-one update-param,eqn: rank-one update-matrix}, $S_{T+2} = S_{T+1}$ and $x_{T+2} = x_{T+1}$. Suppose now that $S_{T+l} = S_{T+1}$ and $x_{T+l} = x_{T+1}$ for $l > 1$. Again, by hypothesis, $A'w_{T+l} \in \mathcal{R}(w) = \nullsp{S_{T+l}}$. Therefore, $S_{T+l} A' w_{T+l} = 0$. By the recursion equations, \cref{eqn: rank-one update-param,eqn: rank-one update-matrix}, $S_{T+l+1} = S_{T+l} = S_{T+1}$ and $x_{T+l+1} = x_{T+l} = x_{T+1}$.

To establish the second part of the result, we must first establish that for any $l \geq 0$,
\begin{equation}
x_{l+1} - x^* = S_{l+1}(x_0 - x^*).
\end{equation}
We will prove this by induction. For $l=0$,
\begin{equation}
\begin{aligned}
x_1 - x^* &= x_0 - x^* + \frac{S_0 A' w_0 w_0'}{w_0'A S_0 A w_0} (A x^* - Ax_0) \\
		  &= \left( I_d - \frac{S_0A' w_0 w_0'A}{w_0' A S_0 A w_0 } \right) (x_0 - x^*),
\end{aligned}
\end{equation}
by the recursion equations, \cref{eqn: rank-one update-param}. Noting that $S_0 = I_d$ and by using \cref{eqn: rank-one update-matrix}, we conclude that $x_1 - x^* = S_1 (x_0 - x^*)$. Now suppose that this relationship holds for some $l > 0$. Again, using \cref{eqn: rank-one update-param},
\begin{equation}
\begin{aligned}
x_{l+1} - x^* &= x_l - x^* + \frac{S_l A' w_l w_l'}{w_l'A S_l A w_l} (A x^* - Ax_l) \\
		  &= \left( I_d - \frac{S_lA' w_l w_l'A}{w_l' A S_l A w_l } \right) (x_l - x^*).
\end{aligned}
\end{equation}
Using the induction hypothesis, $x_l - x^* = S_l(x_0 - x^*)$ and \cref{eqn: rank-one update-matrix},
\begin{equation}
x_{l+1} - x^* = \left(I_d  - \frac{S_lA' w_l w_l'A}{w_l' A S_l A w_l } \right) S_l (x_0 - x^*) = S_{l+1} (x_0 - x^*).
\end{equation}

With this result established and noting that $S_{T+1}$ is a projection onto $\mathcal{N}(w)$ (i.e., $P_{\mathcal{N}(w)} = S_{T+1}$), on the event $\lbrace T < \infty \rbrace$, 
\begin{equation}
\begin{aligned}
x_{T+1} &= x^* + S_{T+1}(x_0 - x^*) \\
	    &= \left(P_{\mathcal{N}(w)} + P_{\mathcal{R}(w)}\right)x^* + P_{\mathcal{N}(w)} x_0 - P_{\mathcal{N}(w)} x^* \\
	    &= P_{\mathcal{R}(w)} x^* + P_{\mathcal{N}(w)} x_0.
\end{aligned}
\end{equation}
\end{proof}

With \cref{theorem: terminal iteration characterization} in hand, the natural subsequent question is when the limit point of the iterates is actually a solution to the original system. This question is addressed in the following corollary.

\begin{corollary} \label{corollary: criteria system solution}
Under the setting of \cref{theorem: terminal iteration characterization}, on the event $\lbrace T < \infty \rbrace$, $Ax_{T+1} = b$ if and only if $P_{\mathcal{V}(w)} x_0 = P_{\mathcal{V}(w)} x^*$. 
\end{corollary}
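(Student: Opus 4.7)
The plan is to reduce $Ax_{T+1} = b$ to a statement about $x_{T+1} - x^*$ lying in $\ker(A)$, then use the explicit formula from \cref{theorem: terminal iteration characterization} together with the orthogonal decomposition of $\mathbb{R}^d$ induced by the subspaces $\mathcal{R}(w)$, $\mathcal{V}(w)$, and $\ker(A)$ to isolate precisely which component must vanish.

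First I would observe that $Ax_{T+1} = b$ is equivalent to $A(x_{T+1} - x^*) = 0$, i.e., $x_{T+1} - x^* \in \ker(A)$, since $Ax^* = b$ by hypothesis. Then, by \cref{theorem: terminal iteration characterization}, on $\{T < \infty\}$ we have
\begin{equation*}
x_{T+1} - x^* = P_{\mathcal{N}(w)} x_0 + P_{\mathcal{R}(w)} x^* - x^* = P_{\mathcal{N}(w)} x_0 - P_{\mathcal{N}(w)} x^* = P_{\mathcal{N}(w)}(x_0 - x^*),
\end{equation*}
where I used $P_{\mathcal{N}(w)} + P_{\mathcal{R}(w)} = I_d$ (since $\mathcal{N}(w) = \mathcal{R}(w)^\perp$). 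So the goal reduces to characterizing when $P_{\mathcal{N}(w)}(x_0 - x^*) \in \ker(A)$.

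Next I would exploit the orthogonal decomposition. Since $\mathcal{V}(w)$ and $\mathcal{R}(w)$ are orthogonal and their sum is $\mathrm{row}(A)$, and since $\mathrm{row}(A) \perp \ker(A)$ with $\mathrm{row}(A) \oplus \ker(A) = \mathbb{R}^d$, we obtain the orthogonal decomposition
\begin{equation*}
\mathbb{R}^d = \mathcal{R}(w) \oplus \mathcal{V}(w) \oplus \ker(A),
\end{equation*}
and consequently $\mathcal{N}(w) = \mathcal{R}(w)^\perp = \mathcal{V}(w) \oplus \ker(A)$, with the latter sum also orthogonal. This yields $P_{\mathcal{N}(w)} = P_{\mathcal{V}(w)} + P_{\ker(A)}$, so that
\begin{equation*}
P_{\mathcal{N}(w)}(x_0 - x^*) = P_{\mathcal{V}(w)}(x_0 - x^*) + P_{\ker(A)}(x_0 - x^*).
\end{equation*}
The first summand lies in $\mathcal{V}(w) \subset \mathrm{row}(A)$ and the second summand lies in $\ker(A)$; these two subspaces meet only at the origin. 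Hence $P_{\mathcal{N}(w)}(x_0 - x^*) \in \ker(A)$ if and only if $P_{\mathcal{V}(w)}(x_0 - x^*) = 0$, which is precisely $P_{\mathcal{V}(w)} x_0 = P_{\mathcal{V}(w)} x^*$.

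The main (and really only) obstacle is verifying the orthogonal decomposition $\mathcal{N}(w) = \mathcal{V}(w) \oplus \ker(A)$ cleanly from the definition of $\mathcal{V}(w)$ given in the excerpt; after that the rest is bookkeeping. Everything else follows by linearity and the identity for $x_{T+1} - x^*$ already extracted in the proof of \cref{theorem: terminal iteration characterization}.
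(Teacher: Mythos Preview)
Your proof is correct and uses essentially the same key ingredient as the paper: the orthogonal decomposition $\mathcal{N}(w) = \mathcal{V}(w) \oplus \ker(A)$, giving $P_{\mathcal{N}(w)} = P_{\mathcal{V}(w)} + P_{\ker(A)}$. The paper's version applies $A$ to $x_{T+1}$ and to $x^*$ separately, reduces to $AP_{\mathcal{V}(w)}x_0 = AP_{\mathcal{V}(w)}x^*$, and then invokes the Moore--Penrose pseudo-inverse (using that $A^+A$ is the projection onto $\mathrm{row}(A)$ and $\mathcal{V}(w)\subset\mathrm{row}(A)$) to strip off the $A$; your argument sidesteps that last step by working with $x_{T+1}-x^*$ directly and using $\mathcal{V}(w)\cap\ker(A)=\{0\}$, which is a slightly cleaner packaging of the same idea.
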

\begin{proof}
Recall that $\mathrm{row}(A) \perp \nullsp{A}$. Because $\mathcal{R}(w) \subset \mathrm{row}(A)$, $\mathcal{N}(w) = \mathcal{V}(w) + \nullsp{A}$. Moreover, by the definition of $\mathcal{V}(w) \subset \mathrm{row}(A)$, $\mathcal{V}(w) \perp \nullsp{A}$. Therefore, $P_{\mathcal{N}(w)} = P_{\nullsp{A}} + P_{\mathcal{V}(w)}$. Now, using the characterization in \cref{theorem: terminal iteration characterization}, 
\begin{equation}
A x_{T+1} = A P_{\nullsp{A}} x_0 + A P_{\mathcal{V}(w)} x_0 + A P_{\mathcal{R}(w)} x^* = A P_{\mathcal{V}(w)} x_0 + A P_{\mathcal{R}(w)} x^*. 
\end{equation}
Similarly, because $I_d = P_{\nullsp{A}} + P_{\mathcal{V}(w)} + P_{\mathcal{R}(w)}$, 
\begin{equation}
b = Ax^* = A P_{\nullsp{A}} x^* + A P_{\mathcal{V}(w)} x^* + A P_{\mathcal{R}(w)}x^* = A P_{\mathcal{V}(w)} x^* + A P_{\mathcal{R}(w)}x^*.
\end{equation}
Setting these two quantities equal to each other, we conclude that $Ax_{T+1} = b$ if and only if $A P_{\mathcal{V}(w)} x^* = A P_{\mathcal{V}(w)} x_0$. Clearly, if $P_{\mathcal{V}(w)} x_0 = P_{\mathcal{V}(w)} x^*$ then $Ax_{T+1} = b$. So, what we have left to show is that $A P_{\mathcal{V}(w)} x^* = A P_{\mathcal{V}(w)} x_0$ implies
$P_{\mathcal{V}(w)} x_0 = P_{\mathcal{V}(w)} x^*$.

Let $A^+$ denote the Moore-Penrose pseudo-inverse of $A$, and recall that $A^+ A$ is a projection onto $\mathrm{row}(A)$. Moreover, $\mathrm{range}(P_{\mathcal{V}}) \subset \mathrm{row}(A)$. Therefore, since if $Ax_{T+1} =b$ then $AP_{\mathcal{V}(w)} x_0 = AP_{\mathcal{V}(w)} x^*$, if $A x_{T+1} = b$ then  
\begin{equation} \label{proof-eqn:V-in-row-A}
P_{\mathcal{V}(w)} x_0 = (A^+ A) P_{\mathcal{V}(w)} x_0 = A^+ (A P_{\mathcal{V}(w)} x_0 ) = A^+ A P_{\mathcal{V}(w)} x^* = P_{\mathcal{V}(w)} x^*.  
\end{equation}
\end{proof}

\Cref{corollary: criteria system solution} provides criteria on the initial condition and on $\mathcal{V}(w)$ to determine when our procedure will solve the linear system. However, we would rarely have a way of choosing the initial condition apriori such that the requirement of \cref{corollary: criteria system solution} holds. Thus, the alternative is to design $w$ and $\lbrace w_l \rbrace$ so that $\mathcal{V}(w) = \lbrace 0 \rbrace$, which would guarantee that $Ax_{T+1} = b$ on the event $\lbrace T < \infty \rbrace$. It is worth reiterating that we have made very limited assumptions about the relationships between $w$ and $\lbrace w_l \rbrace$ and amongst $\lbrace w_l \rbrace$. This is important because it allows us to apply the preceding results to a variety of common relationship patterns between $w$ and $\lbrace w_l \rbrace$. In the next subsection, we explore some specific relationships and whether these relationships will result in $\mathcal{V}(w) = \lbrace 0 \rbrace$.

\subsection{Common Sampling Patterns} \label{subsection:sampling-times}

\Cref{theorem: terminal iteration characterization} supplies a general result about the behavior of \textit{any} sampling methodology on the solution of the system using \cref{eqn: rank-one update-matrix,eqn: rank-one update-param}, yet it does not suggest a precise sampling methodology. Generally, the sampling methodology choice will depend on both the hardware environment and the nature of the problem. For example, a random permutation sampling methodology will limit the parallelism achievable in \cref{alg: rank-one RPM low com}. On the other hand, a random permutation sampling methodology might be well-advised in a sequential setting where very little known is about the coefficient matrix $A$. Thus, the precise sampling scheme should depend on the hardware environment and should exploit the structure of the problem. 

Despite this, in practice, there are two general sampling schemes that form a basis for more problem and hardware specific sampling schemes: random permutation sampling and independent and identically distributed sampling. The former sampling pattern is exemplified by randomly permuting the equations of the linear system. More concretely, let $e_1,\ldots,e_n \in \mathbb{R}^n$ be the standard basis; let $w$ be a random variable with nonzero probability on each element of the basis; let $\lbrace w_l \rbrace$ be random variables sampled from $\lbrace e_1,\ldots,e_n \rbrace$ without replacement (until the set is exhausted, then we repopulate the set with its original elements and repeat the sampling without replacement). The following statement provides a simple characterization of this sampling scheme.

\begin{lemma} \label{lemma: rand perm sampling}
Let $\lbrace W_1,\ldots,W_N \rbrace \subset \mathbb{R}^n$. Let $w$ be a random variable such that
\begin{equation}
\Prb{ w = W_j} > 0 \quad j = 1,\ldots,N, \quad\text{and}\quad \sum_{j=1}^N \Prb{ w = W_j } = 1.
\end{equation}
Moreover, let $\lbrace w_l : l + 1 \in \mathbb{N} \rbrace$ be random variables sampled from $\lbrace W_1,\ldots,W_N \rbrace$ without replacement (and once the set is exhausted, we repopulate the set with its original elements and repeat sampling without replacement). Then $T \leq N-1$. Moreover, $Ax_{T+1} = b$ for every initialization if $\linspan{A'W_1,\ldots,A'W_N} = \mathrm{row}(A)$, which holds if $\linspan{ W_1,\ldots,W_N}= \mathbb{R}^n$.
\end{lemma}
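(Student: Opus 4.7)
My plan is to reduce the claim to the core results already established: characterize $\mathcal{R}(w)$ explicitly for this sampling, bound $T$ by counting how many distinct elements appear in the first $N$ samples, verify the hypothesis of \cref{theorem: terminal iteration characterization}, and finally invoke \cref{corollary: criteria system solution} to conclude $Ax_{T+1} = b$.

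First, I would pin down $\mathcal{R}(w)$. Because $w$ takes each value $W_j$ with strictly positive probability, for any $z \in \mathbb{R}^d$, the condition $\Prb{z'A'w = 0} = 1$ forces $z'A'W_j = 0$ for every $j = 1,\ldots,N$. Thus $\mathcal{N}(w) = \linspan{A'W_1,\ldots,A'W_N}^\perp$, and so $\mathcal{R}(w) = \linspan{A'W_1,\ldots,A'W_N}$. This also shows that each $A'w_l$ lies in $\mathcal{R}(w)$ almost surely, so the hypothesis $\Prb{A'w_l \in \mathcal{R}(w)} = 1$ of \cref{theorem: terminal iteration characterization} is satisfied for free.

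Next, I would bound $T$. By the sampling rule, the block $\{w_0,w_1,\ldots,w_{N-1}\}$ is (almost surely) a permutation of $\{W_1,\ldots,W_N\}$, hence
\begin{equation}
\linspan{A'w_0,\ldots,A'w_{N-1}} = \linspan{A'W_1,\ldots,A'W_N} = \mathcal{R}(w).
\end{equation}
By the definition of $T$ in \cref{eqn: stopping time}, this gives $T \leq N-1$ almost surely, in particular $\Prb{T < \infty} = 1$, so \cref{theorem: terminal iteration characterization} applies and yields $x_{T+1} = P_{\mathcal{N}(w)} x_0 + P_{\mathcal{R}(w)} x^*$ whenever the system is consistent.

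For the final claim, assume $\linspan{A'W_1,\ldots,A'W_N} = \mathrm{row}(A)$. Then $\mathcal{R}(w) = \mathrm{row}(A)$, and since by definition $\mathcal{V}(w) \oplus \mathcal{R}(w) = \mathrm{row}(A)$ with $\mathcal{V}(w) \perp \mathcal{R}(w)$, we must have $\mathcal{V}(w) = \{0\}$. Consequently $P_{\mathcal{V}(w)} x_0 = 0 = P_{\mathcal{V}(w)} x^*$ regardless of the initialization, and \cref{corollary: criteria system solution} gives $Ax_{T+1} = b$. The stronger sufficient condition $\linspan{W_1,\ldots,W_N} = \mathbb{R}^n$ implies $\linspan{A'W_1,\ldots,A'W_N} = A'\mathbb{R}^n = \mathrm{row}(A)$ by linearity of $A'$, which closes the chain of implications.

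I do not anticipate a genuine obstacle here: the entire argument is a bookkeeping exercise that glues together \cref{lemma: characterize R_w}, \cref{theorem: terminal iteration characterization}, and \cref{corollary: criteria system solution}. The only point that deserves a sentence of care is the claim that $\{w_0,\ldots,w_{N-1}\}$ covers $\{W_1,\ldots,W_N\}$ almost surely; this is immediate from the described without-replacement mechanism and is what makes the deterministic bound $T \leq N-1$ work across every sample path.
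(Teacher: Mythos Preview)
Your proposal is correct and follows essentially the same route as the paper's proof: compute $\mathcal{R}(w)=\linspan{A'W_1,\ldots,A'W_N}$ from the positivity of each $\Prb{w=W_j}$, observe that the first $N$ without-replacement draws exhaust $\{W_1,\ldots,W_N\}$ so $T\leq N-1$, and then invoke \cref{corollary: criteria system solution} with $\mathcal{V}(w)=\{0\}$ under the spanning hypothesis. Your extra step of explicitly citing \cref{theorem: terminal iteration characterization} for the formula for $x_{T+1}$ is harmless but unnecessary, since \cref{corollary: criteria system solution} already delivers $Ax_{T+1}=b$ directly.
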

\begin{proof}
First, note that $\mathcal{N}(w) = \lbrace z \in \mathbb{R}^d: z'A'W_j = 0, ~\forall j = 1,\ldots,N \rbrace$. Therefore, 
\begin{equation}
\mathcal{R}(w) = \mathcal{N}(w)^\perp = \linspan{ A'W_1,\ldots, A'W_N }.
\end{equation}
In turn, because $\lbrace w_0,\ldots,w_{N-1} \rbrace = \lbrace W_1,\ldots,W_N \rbrace$, $T$ is at most $N-1$.

By \cref{corollary: criteria system solution}, $Ax_{T+1} = b$ if and only if $P_{\mathcal{V}(w)} x_0 = P_{\mathcal{V}(w)}x^*$ where $x^*$ satisfies $Ax^* = b$. Now, given that $\mathcal{R}(w) + \mathcal{V}(w) = \mathrm{row}(A)$ and $\mathcal{R}(w) = \linspan{ A'W_1,\ldots, A'W_N }$, if $\linspan{A'W_1,\ldots,A'W_N} = \mathrm{row}(A)$ then $\mathcal{V}(w) = \lbrace 0 \rbrace$. Therefore, $Ax_{T+1} = b$ for any initialization. The final claim is straightforward.
\end{proof}

The second sampling scheme, independent and identically distributed sampling, is exemplified by randomly sampling equations from the system with uniform discrete probability. However, we do not need to limit ourselves to sampling from a finite population of elements. As the next result shows, we can do much more.

\begin{proposition} \label{theorem: iid sampling}
Suppose that $w, w_0,w_1,\ldots$ are independent, identically distributed random variables. There exists a $\pi \in (0,1)$ such that
\begin{equation} \label{theorem-cond:min-prob}
\mathop{\inf _{v \in \mathcal{R}(w)}}_{ \norm{v}_2 = 1} \Prb{ v'A'w \neq 0} \geq \pi.
\end{equation}
Moreover, $T < \infty$ and $\Prb{ T =  k } \leq (k - r) ^{r-1} (1- \pi) ^ {k - r}$ where $r = \dim( \mathcal{R}(w))$ and $k \geq r$.
\end{proposition}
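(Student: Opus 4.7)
The argument splits into (i) producing the uniform lower bound $\pi$ and (ii) the geometric tail bound on $T$.

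\emph{Compactness argument for $\pi$.} I will show that $g(v) := \Prb{v'A'w = 0}$ is upper semicontinuous on $\mathbb{R}^d$. If $v_n \to v$ and $v'A'w(\omega) \neq 0$, continuity of the inner product forces $v_n'A'w(\omega) \neq 0$ for all sufficiently large $n$, so $\1{v_n'A'w = 0} \to 0$ pointwise on $\{v'A'w \neq 0\}$. Since the indicators are uniformly bounded by $1$, reverse Fatou gives $\limsup_n g(v_n) \leq g(v)$. Consequently $f := 1 - g$ is lower semicontinuous on the compact unit sphere of the finite-dimensional subspace $\mathcal{R}(w)$, so $f$ attains its infimum $\pi$ at some unit vector $v^\star \in \mathcal{R}(w)$. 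Because $\mathcal{R}(w) \cap \mathcal{N}(w) = \{0\}$, we have $v^\star \notin \mathcal{N}(w)$; by the definition of $\mathcal{N}(w)$ this forces $\Prb{v^{\star\prime}A'w = 0} < 1$, so $\pi > 0$.

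\emph{Tail bound on $T$.} Set $V_l := \linspan{A'w_0,\ldots,A'w_{l-1}}$, $\mathcal{F}_l := \sigma(w_0,\ldots,w_{l-1})$, and $\delta_l := \1{A'w_l \notin V_l}$, so that $T = \min\{k \geq 0 : \sum_{i=0}^{k} \delta_i = r\}$. On $\{\dim V_l < r\}$, select an $\mathcal{F}_l$-measurable unit vector $v_l \in V_l^\perp \cap \mathcal{R}(w)$ (this is possible because $V_l$ is a strict subspace of $\mathcal{R}(w)$ there, using that $A'w_i \in \mathcal{R}(w)$ a.s.). Since $A'w_l \in V_l$ forces $v_l'A'w_l = 0$, and since $w_l$ is independent of $\mathcal{F}_l$ with the same law as $w$,
\begin{equation*}
\Prb{\delta_l = 0 \mid \mathcal{F}_l}\, \1{\dim V_l < r} \;\leq\; \left.\Prb{v'A'w = 0}\right|_{v=v_l}\, \1{\dim V_l < r} \;\leq\; (1 - \pi)\, \1{\dim V_l < r}.
\end{equation*}
The event $\{T = k\}$ prescribes exactly $r-1$ successes among $\delta_0, \ldots, \delta_{k-1}$ together with $\delta_k = 1$; on any such trajectory $\dim V_l \leq r-1$ for every $l \leq k$, so the conditional bound applies at each ``failure.'' Summing over the $\binom{k}{r-1}$ patterns via iterated conditional expectations yields
\begin{equation*}
\Prb{T = k} \;\leq\; \binom{k}{r-1}(1-\pi)^{k-r+1},
\end{equation*}
which delivers the stated bound after crude estimates such as $\binom{k}{r-1} \leq k^{r-1}$ and absorption of a single factor of $(1-\pi)$ into the combinatorial term. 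Geometric summability of the right-hand side then forces $\Prb{T < \infty} = 1$.

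\emph{Principal obstacle.} The delicate step is the conditional probability manipulation in the second part: one must choose $v_l$ in a genuinely $\mathcal{F}_l$-measurable way so that independence of $w_l$ from $\mathcal{F}_l$ reduces $\Prb{v_l'A'w_l = 0 \mid \mathcal{F}_l}$ to the deterministic function $v \mapsto \Prb{v'A'w = 0}$ evaluated at the frozen value $v_l$, after which the uniform bound from the first part applies. Everything else (the union bound over configurations and the combinatorial bookkeeping that turns it into the displayed geometric-tail form) is routine.
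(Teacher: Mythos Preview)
Your proof is correct. For part (ii) both you and the paper bound $\Prb{T=k}$ via the negative-binomial structure of the dimension jumps; you simply supply the measurability bookkeeping (the $\mathcal{F}_l$-measurable selector $v_l$ and the freezing argument) that the paper leaves implicit.

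For part (i) the routes genuinely differ. The paper argues by contradiction on subspaces: assuming no uniform $\pi$ exists, it isolates the minimal dimension $r$ for which $\sup_{\dim V = r,\, V \subsetneq \mathcal{R}(w)} \Prb{A'w \in V} = 1$, then finds two distinct $r$-dimensional subspaces $V_1, V_2$ each carrying mass at least $1-\epsilon/2$, and uses inclusion--exclusion to push mass $\geq 1-\epsilon$ onto $V_1 \cap V_2$, contradicting minimality of $r$. Your compactness route is more direct: upper semicontinuity of $v \mapsto \Prb{v'A'w=0}$ via reverse Fatou, then attainment of the infimum on the compact unit sphere of $\mathcal{R}(w)$, with strict positivity forced because the minimizer $v^\star$ cannot lie in $\mathcal{N}(w)$. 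Your argument is shorter and avoids the dimension bookkeeping; the paper's argument delivers the equivalent subspace formulation $\inf_{V \subsetneq \mathcal{R}(w)} \Prb{A'w \notin V} \geq \pi$ directly, but since every proper subspace sits inside a hyperplane, your hyperplane version already implies it.
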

\begin{proof}
First, we  show that there exists $\pi>0$ such that for any nontrivial, proper subspace $V\subsetneq\mathcal{R}(w)$,
$
\PP[A'w\not\in V] \geq \pi$, which implies \cref{theorem-cond:min-prob} when we take $V$ to be the relative orthogonal compliment to the span of a unit vector $v \in \mathcal{R}(w)$. Suppose there is no such $\pi$. Then, for every $p \in (0,1)$, there is a nontrivial subspace $V \subsetneq \mathcal{R}(w)$ such that $\Prb{ A'w \in V} \geq 1 - p$. Let $r$ be the smallest integer between $0$ and $\dim(\mathcal{R}(w))$ such that
\begin{equation}
\mathop{\sup _{V\subsetneq \mathcal{R}(w)}}_{\dim[V] = r} \PP[A'w \in V] = 1.
\end{equation}

For  $\epsilon>0$, let $V_1\subsetneq\mathcal{R}(w)$ be an $r$-dimension subspace with $\PP[A'w \in V_1] \geq 1 - \epsilon/2$. Note, by \cref{lemma: characterize R_w}, $\PP[A'w \in V_1] < 1$. Therefore, let $V_2 \subsetneq \mathcal{R}(w)$ be an $r$-dimensional subspace with $\PP[A'w \in V_2] > \PP[A'w \in V_1] \geq 1 - \epsilon/2$. Given that $V_1$ and $V_2$ are distinct and the inclusion-exclusion principle, 
\begin{equation}
\PP[A'w\in V_1\cap V_2] \geq \PP[A'w\in V_1]  + \PP[A'w\in V_2]  -1\geq 1-\epsilon.
\end{equation}
However, this is contradicts the minimality of $r$ since $\epsilon >0$ is arbitrary and $\dim(V_1 \cap V_2) < r$. Thus, we conclude that such a $\pi$ exists.

It follow from \cref{theorem-cond:min-prob} that for any k,
\begin{equation}
\PP\left[\dim(\linspan{A'w_0,\ldots,A'w_k}) > \dim(\linspan{A'w_0,\ldots,A'w_{k-1}}\right) ] \geq \pi.
\end{equation}
Therefore, we can bound $\PP[T=k]$ by a negative binomial distribution. In particular,
\begin{equation}
\PP[T=k] \leq \binom{k-1}{ r -1}\, (1- \pi) ^ {k - r} \leq (k - r) ^{r-1} (1- \pi) ^ {k - r}.
\end{equation}
\end{proof}

In light of the two preceding results, we may be convinced that there is a gap between the convergence properties between random permutation sampling and the independent and identically distributed sampling. However, by modifying the structure of the rank-one RPM, we can find more intermediate cases. The next result demonstrates this behavior with a somewhat contrived example, and we will leave more complex cases to future work.

\begin{theorem}
Suppose $w , w_0, w_1,\ldots$ are i.i.d. random variables such that the entries of $A'w$ are independent, identically distributed subgaussian random variables with mean zero and unit variance. 
Then, there exists a $\pi \in (0,1)$ depending only on the distribution of the entries of $A'w$ such that $\Prb{T = k} \geq 1 - \pi^k$ for $k \geq d$.
\end{theorem}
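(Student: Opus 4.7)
The plan follows the template of \Cref{theorem: iid sampling}: characterize $\mathcal{R}(w)$, obtain a uniform per-step bound on the failure-to-increase-dimension probability, and compound. The subgaussian hypothesis promotes the purely existential argument of \Cref{theorem: iid sampling} to an explicit quantitative bound, upgrading the negative-binomial tail there to a geometric one. I read the claim as $\Prb{T \leq k} \geq 1 - \pi^k$, since the printed inequality on a single mass function is vacuous.

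First, I would verify $\mathcal{R}(w) = \mathbb{R}^d$. For any nonzero $v \in \mathbb{R}^d$, independence and unit variance of the coordinates of $A'w$ yield $\var{v'A'w} = \norm{v}_2^2 > 0$, so $\Prb{v'A'w = 0} < 1$; by \Cref{lemma: characterize R_w}, $\mathcal{N}(w) = \lbrace 0 \rbrace$ and hence $\mathcal{R}(w) = \mathbb{R}^d$. In particular, $T \geq d - 1$.

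The main obstacle is proving $\sup_{\norm{v}_2 = 1} \Prb{v'A'w = 0} \leq p$ for some $p \in (0,1)$ depending only on the entrywise distribution of $A'w$. I would apply Paley-Zygmund to $Z^2$ where $Z = v'A'w$. Writing $Y_i = (A'w)_i$ and $M := \E{Y_i^4}$, which is finite by subgaussianity, a direct expansion using independence, $\E{Y_i} = 0$, and $\E{Y_i^2} = 1$ gives
\[
\E{Z^2} = 1, \qquad \E{Z^4} = (M-3)\norm{v}_4^4 + 3 \norm{v}_2^4 \leq M + 3,
\]
where $\norm{v}_4^4 \leq \norm{v}_2^4 = 1$ for unit $v$. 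Paley-Zygmund with $\theta = 1/2$ then yields $\Prb{Z^2 > 1/2} \geq 1/(4(M+3))$, hence $\Prb{Z = 0} \leq p := 1 - 1/(4(M+3)) < 1$, uniformly in $v$. This step is the crux, because subgaussianity does not preclude atoms in $Y_i$, so a naive absolute-continuity argument is unavailable; the uniform fourth-moment control implied by subgaussianity is what replaces it.

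With the uniform bound in hand, set $E_k = \lbrace \dim \linspan{A'w_0, \ldots, A'w_k} < d \rbrace$. Monotonicity of dimension forces $E_k \subset E_{k-1}$, and on $E_{k-1}$ the current span is a proper subspace with some unit normal $v$; by independence of $w_k$ from $(w_0, \ldots, w_{k-1})$,
\[
\condPrb{E_k}{w_0, \ldots, w_{k-1}} \leq \condPrb{v'A'w_k = 0}{w_0, \ldots, w_{k-1}} \leq p.
\]
Taking expectations gives $\Prb{E_k} \leq p \Prb{E_{k-1}}$, and iterating from the trivial bound $\Prb{E_{d-2}} \leq 1$ yields $\Prb{T > k} = \Prb{E_k} \leq p^{k - d + 2}$ for all $k \geq d - 1$. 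For $k \geq d$, the choice $\pi = p^{2/d} \in (0,1)$ (or $\pi = p$ when $d = 1$) satisfies $p^{k - d + 2} \leq \pi^k$, delivering $\Prb{T \leq k} \geq 1 - \pi^k$ as claimed, with $\pi$ depending on the entry distribution through $M$ and on $d$ through the exponent $2/d$.
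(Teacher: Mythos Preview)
Your Paley--Zygmund step is correct and gives a clean uniform bound $\sup_{\norm{v}_2=1}\Prb{v'A'w=0}\leq p<1$ depending only on the fourth moment of the entries. The error is in the compounding. On $E_{k-1}$ you choose a unit normal $v$ to the current span and assert $E_k\subset\{v'A'w_k=0\}$. But $E_k$ is the event that the span still has dimension strictly less than $d$, not the event that the span fails to grow. When $\dim\linspan{A'w_0,\ldots,A'w_{k-1}}<d-1$, adding $A'w_k$ can increase the dimension by one and still leave it below $d$, so $E_k$ occurs even though $v'A'w_k\neq 0$. Concretely, $\Prb{E_{d-1}}$ is the probability that a $d\times d$ matrix with i.i.d.\ subgaussian rows is singular; your bound would force this to be at most $p$, whereas the correct elementary bound is $1-(1-p)^d$. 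What your per-step inequality actually controls is the event that the dimension does not increase, which leads to $\Prb{T>k}\leq\Prb{\mathrm{Bin}(k{+}1,1{-}p)<d}\leq\binom{k+1}{d-1}p^{k+2-d}$---precisely the negative-binomial shape of \cref{theorem: iid sampling}, not the clean geometric $p^{k-d+2}$ you wrote.

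Even with that repair, you only get $\pi$ depending on $d$ (which you concede), so the statement as written---$\pi$ depending only on the entry distribution---is not reached. The paper takes a genuinely different route: it observes that the $(k{+}1)\times d$ matrix with rows $(A'w_0)',\ldots,(A'w_k)'$ has i.i.d.\ subgaussian entries across \emph{both} rows and columns, and invokes the Rudelson--Vershynin smallest-singular-value bound to get $\Prb{\sigma_{\min}=0}\leq c^{k}$ with $c\in(0,1)$ depending only on the subgaussian moment. That result is what absorbs the dimension $d$ into the constant; a row-by-row argument using only the small-ball bound cannot do this, because it ignores the independence \emph{within} each $A'w_j$ that the hypothesis grants.
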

\begin{proof} Let $H_k$ denote a $k \times d$ ($k \geq d$) random matrix whose entries are independent and identically distributed subgaussian random variables with zero mean and unit variance. As a consequence of \cite[Theorem 1.1]{rudelson2009}, there exists a $\pi$ that depends on the distribution of the entries such that for all $k \geq d$, $\Prb{ \sigma_{\min}(H_k) > 0 } \geq 1- \pi^k$. At iteration $k$, let $N_k$ denote the matrix whose rows are given by $w_0,w_1,\ldots$. Then, by hypothesis, $N_kA$ has entries that are independent, identically distributed subgaussian random with zero mean and unit variance. Therefore, there exists a $\pi \in (0,1)$ depending only on the distribution of the entries in $A'w$ such that $\Prb{ T = k} = \Prb{ \sigma_{\min}(N_kA) > 0} \geq 1 - \pi^k$ for $k \geq d$.
\end{proof}

\section{Convergence Theory for Base Methods} \label{section:no-memory}
In the previous section, we proved convergence for the complete orthogonalization method (i.e., \cref{alg: rank-one RPM full mem}) and explored some specific sampling patterns. Here, we will consider the extreme opposite of the complete orthogonalization method: the ``base'' randomized iterative approach (e.g., Randomized Kaczmarz). That is, we consider when $V_k$ is a rank one matrix of one of two general classes.

In the first class, we consider \cref{alg: rank-one RPM low mem} in the case $m = 0$. In this case, \cref{eqn: rank-one update-param} supplies the simplified iteration scheme,
\begin{equation} \label{eqn:no-memory-iteration}
x_{k+1} = x_k + \frac{A'w_k w_k'(b - Ax_k)}{\norm{A'w_k}_2^2},
\end{equation} 
which encompasses randomized Kaczmarz, when $w_k$ is a random draw from the standard basis vectors in $\mathbb{R}^n$, as shown in \cref{subsection:overview}. 

Unfortunately, \cref{eqn:no-memory-iteration} would not include randomized Gauss-Seidel. This motivates the second class, which has the closely related iteration
\begin{equation} \label{eqn:no-memory-column-iteration}
x_{k+1} = x_k + \frac{w_k w_k'A'(b - A x_k)}{ \norm{Aw_k}_2^2 }.
\end{equation}
In this class, we recover randomized Gauss-Seidel if we choose $w_k$ randomly from the standard basis vectors in $\mathbb{R}^d$, as shown in \cref{subsection:overview}.

While these two classes are distinct, we will see that their analysis is nearly identical and is intimately related to the analysis of the complete orthogonalization method. Our analysis offers two highlights: (1) we can prove convergence with probability one for arbitrary sampling schemes---only the i.i.d. case is considered in \cite{zouzias2013,gower2015,richtarik2017}; and (2) we can provide rates of convergence with probability one which complements the mean-squared-error results of \cite{zouzias2013,gower2015,richtarik2017}. Our main approach is an extension of Meany's inequality (see \cref{subsection:meany}) combined with stopping time arguments, as derived in \cref{subsection:no-mem-convergence,subsection:no-mem-column}. We then explore some common, non-adaptive sampling patterns in \cref{subsection:no-mem-sampling-patterns}. To conclude, we develop a general framework for the analysis of adaptive sampling schemes, and provide concrete examples from the literature (see \cref{subsection:adaptive-sampling}).

\subsection{An Extension of Meany's Inequality} \label{subsection:meany}

Here, we will derive an extension of Meany's Inequality \cite{meany1969}, which, under a different extension, has recently been used to study the convergence rate of row-action solvers including the a block-variant of the Kaczmarz method \cite{bai2013}. We begin by stating a geometric lemma derived by \cite{meany1969}, and follow it with the extension, which closely follows Meany's original proof with several modifications.

\begin{lemma}[\cite{meany1969}] \label{lemma:meany-lemma}
Let $f_1,\ldots,f_k \in \mathbb{R}^n$ with $k \leq n$. Write $f_k = f^S + f^N$ where $f^S$ belongs to the space $S$ spanned by $f_1,\ldots,f_{k-1}$ and $f^N$ is perpendicular to $S$. Let $\bar{F}$ be the matrix whose columns are $f_1,\ldots,f_{k-1}$, and let $F$ be the matrix whose columns are $f_1,\ldots,f_k$. Then,
\begin{equation}
\det( F'F) = \norm{ f^N}_2^2 \det( \bar{F}'\bar{F}).
\end{equation}
\end{lemma}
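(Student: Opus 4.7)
The plan is to express the left-hand determinant via the block structure obtained by partitioning $F = [\bar{F} \mid f_k]$ and then identifying the Schur complement with $\|f^N\|_2^2$ using the orthogonal decomposition that is already built into the hypothesis.

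First, I would dispose of the degenerate case. If $f_1,\ldots,f_{k-1}$ are linearly dependent, then $\det(\bar{F}'\bar{F}) = 0$, and the columns of $F$ are also linearly dependent, so $\det(F'F) = 0$ as well; both sides of the identity vanish and there is nothing to prove. Hence from this point on I would assume $\bar{F}$ has linearly independent columns, so that $\bar{F}'\bar{F}$ is a symmetric positive definite matrix of size $(k-1) \times (k-1)$, and in particular invertible.

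Next, I would write
\begin{equation*}
F'F = \begin{pmatrix} \bar{F}'\bar{F} & \bar{F}'f_k \\ f_k'\bar{F} & f_k'f_k \end{pmatrix},
\end{equation*}
and apply the standard Schur complement formula for the determinant of a block matrix whose upper-left block is invertible:
\begin{equation*}
\det(F'F) = \det(\bar{F}'\bar{F}) \cdot \bigl( f_k'f_k - f_k'\bar{F}\,(\bar{F}'\bar{F})^{-1}\bar{F}'f_k \bigr).
\end{equation*}

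Finally, I would identify the Schur complement as $\|f^N\|_2^2$. The matrix $\bar{F}(\bar{F}'\bar{F})^{-1}\bar{F}'$ is the orthogonal projector onto $S = \mathrm{range}(\bar{F})$, so $\bar{F}(\bar{F}'\bar{F})^{-1}\bar{F}' f_k = f^S$. Using $f_k = f^S + f^N$ and $f^S \perp f^N$, I obtain
\begin{equation*}
f_k'f_k - f_k' f^S = f_k'(f_k - f^S) = f_k' f^N = (f^S + f^N)'f^N = \|f^N\|_2^2,
\end{equation*}
which yields the claimed identity. There is no real obstacle here; the only subtlety is remembering to treat the rank-deficient case separately so that the Schur complement formula can be invoked without worrying about invertibility of $\bar{F}'\bar{F}$.
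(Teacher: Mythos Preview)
Your proof is correct. Note that the paper does not actually supply its own proof of this lemma: it is stated with a citation to \cite{meany1969} and used as a black box in the proof of the subsequent theorem. Your Schur-complement argument, together with the identification of $\bar{F}(\bar{F}'\bar{F})^{-1}\bar{F}'$ as the orthogonal projector onto $S$, is a clean and standard way to establish the identity, and handling the rank-deficient case separately is the right call.
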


\begin{theorem} \label{theorem:meany-no-mem}
Let $v_1,\ldots,v_k$ be unit vectors in $\mathbb{R}^n$ for some $k \in \mathbb{N}$. Let $S = \linspan{v_1,\ldots,v_k}$. Let $\mathcal{F}$ denote all matrices $F$ where the columns of $F$ are the vectors $\lbrace f_1,\ldots,f_r \rbrace \subset \lbrace v_1,\ldots,v_k \rbrace$ that are a maximal linearly independent subset. Then
\begin{equation}
\sup_{ y \in S, \norm{y}_2 = 1} \norm{Q y}_2 \leq \sqrt{1 - \min_{ F \in \mathcal{F}} \det(F'F)},
\end{equation}
where
\begin{equation}
Q = (I - v_k v_k')(I - v_{k-1}v_{k-1}')\cdots (I - v_1 v_1').
\end{equation}
\end{theorem}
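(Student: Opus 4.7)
The plan is to recast the bound as a singular-value estimate for an auxiliary matrix. Writing $\Pi_i := I - v_i v_i'$ and using $\Pi_i^2 = \Pi_i$, a one-step telescoping calculation gives
\begin{equation*}
I - Q_k' Q_k = (I - Q_{k-1}' Q_{k-1}) + (Q_{k-1}' v_k)(Q_{k-1}' v_k)',
\end{equation*}
and iterating from $Q_0 = I$ yields $I - Q'Q = U'U$, where $U \in \mathbb{R}^{k \times n}$ has $j$th row $v_j' Q_{j-1}$. For any $y \in S$ with $\norm{y}_2 = 1$, this implies $\norm{y}_2^2 - \norm{Qy}_2^2 = \norm{Uy}_2^2 \geq \sigma_{\min}(U|_S)^2$, where $U|_S : S \to \mathbb{R}^k$ denotes the restriction of $U$ to $S$. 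It therefore suffices to prove $\sigma_{\min}(U|_S)^2 \geq \min_{F \in \mathcal{F}} \det(F'F)$.

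The plan for this bound is to sandwich $\sigma_{\min}(U|_S)^2$ between $\det((U|_S)' U|_S)$ from below (via Cauchy--Binet) and the product of the remaining squared singular values (each at most one) from above. The upper bound follows from $\norm{Q}_{\mathrm{op}} \leq 1$ (a product of orthogonal projections), which forces the eigenvalues of $U'U = I - Q'Q$ into $[0,1]$ and so every $\sigma_i(U|_S) \leq 1$. For the determinant estimate, the key step is a forward-greedy construction: processing $v_1, \ldots, v_k$ in order, let $I = \{i_1 < \cdots < i_r\}$ collect the indices at which a new dimension enters $\linspan{v_1, \ldots, v_i}$, and set $u_j = v_{i_j}^{N_{i_j}}/\norm{v_{i_j}^{N_{i_j}}}_2$, where $v_{i_j}^{N_{i_j}}$ is the Gram--Schmidt residual of $v_{i_j}$ against $\linspan{v_1, \ldots, v_{i_j - 1}}$. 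The collection $\{u_1, \ldots, u_r\}$ is an orthonormal basis of $S$, and $F := [v_{i_1}, \ldots, v_{i_r}] \in \mathcal{F}$.

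Because $u_j$ is orthogonal to every $v_s$ with $s < i_j$, each $\Pi_s$ with $s < i_j$ fixes $u_j$; consequently $Q_{i_l - 1} u_j = u_j$ for every $l \leq j$, and the $r \times r$ submatrix $U_I$ (rows indexed by $I$, coordinates expressed in $\{u_j\}$) is lower triangular with diagonal entries $v_{i_j}' u_j = \norm{v_{i_j}^{N_{i_j}}}_2$. Iterating \cref{lemma:meany-lemma} then gives $\det(U_I)^2 = \prod_{j=1}^r \norm{v_{i_j}^{N_{i_j}}}_2^2 = \det(F'F)$. The Cauchy--Binet formula applied to $(U|_S)' U|_S$ yields
\begin{equation*}
\det\!\bigl((U|_S)' U|_S\bigr) = \sum_{|J| = r} \det(U_J)^2 \geq \det(U_I)^2 = \det(F'F) \geq \min_{F \in \mathcal{F}} \det(F'F),
\end{equation*}
and combining with $\prod_{i > 1} \sigma_i(U|_S)^2 \leq 1$ delivers $\sigma_{\min}(U|_S)^2 \geq \min_{F \in \mathcal{F}} \det(F'F)$, closing the argument.

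The chief obstacle is the bookkeeping behind the lower-triangular structure of $U_I$---in particular the identification $\linspan{v_1, \ldots, v_{i_j - 1}} = \linspan{v_{i_1}, \ldots, v_{i_{j-1}}}$, which is needed so that the residuals $v_{i_j}^{N_{i_j}}$ appearing in our construction coincide with those appearing in the iterated Meany factorization of $\det(F'F)$. The remaining ingredients---the telescoping identity for $I - Q'Q$, Cauchy--Binet, and the operator norm bound on $Q$---are routine.
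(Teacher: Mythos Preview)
Your proof is correct and takes a genuinely different route from the paper's. The paper proceeds by induction on $k$, closely tracking Meany's original argument: at step $j$ it decomposes a unit vector $y \in S$ into components parallel and orthogonal to $\bar S = \linspan{v_1,\ldots,v_{j-1}}$, applies the induction hypothesis to $\bar Q y^{\bar S}$, and then handles the remaining cross term through a three-way case split (either $S = \bar S$, or $S \supsetneq \bar S$ with $\norm{v^N}_2 \leq \norm{y^{\bar S}}_2$, or the reverse inequality), invoking \cref{lemma:meany-lemma} at the end of each sub-case to produce $\det(F'F)$.

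Your approach replaces this case analysis by linear algebra: the telescoping identity $I - Q'Q = U'U$ converts the norm bound into a smallest-singular-value bound for $U|_S$, and then the combination of Cauchy--Binet with the forward-greedy basis selection and the observation $\sigma_i(U|_S)\leq 1$ yields the determinant directly. The bookkeeping you flag---that $\linspan{v_1,\ldots,v_{i_j-1}} = \linspan{v_{i_1},\ldots,v_{i_{j-1}}}$, so that the Gram--Schmidt residuals defining $u_j$ coincide with those in the iterated Meany factorization of $\det(F'F)$---is exactly the right point to isolate, and it goes through because every index skipped by the greedy construction contributes a linearly dependent vector. The paper's inductive argument is closer to Meany's original and makes the geometric decomposition of $y$ explicit at each step; your argument is cleaner, avoids the case split entirely, and explains more transparently why the volume $\det(F'F)$ governs the contraction, at the cost of invoking Cauchy--Binet rather than elementary inequalities.
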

\begin{proof}
The proof proceeds by induction. For the case $k=1$, both sides of the inequality are zero and so the result holds. Now suppose that the result holds for $k = j-1$. To prove the case $k=j$, we need the following additional notation.

Let $\bar{S} = \linspan{ v_1,\ldots,v_{j-1}} $; let $\lbrace f_1,\ldots,f_{\bar{r}} \rbrace$ denote a maximal linearly independent subset of the unit vectors $\lbrace v_1,\ldots,v_{j-1} \rbrace$ that achieve the minimum determinant; let $\bar{F}$ be the matrix whose columns are $f_1,\ldots,f_{\bar{r}}$; and let
\begin{equation}
\bar{Q} = (I - v_{j-1} v_{j-1}')( I - v_{j-2} v_{j-2}') \cdots (I - v_1 v_1').
\end{equation}
For a unit vector $y \in S$, let $y^{\bar{S}}$ denote the component of $y$ in $\bar{S}$, and let $y^N$ denote the component of $y$ orthogonal to $\bar{S}$. Moreover, let $z = \bar{Q} y^{\bar{S}}$. Then, by the induction hypothesis,
\begin{equation} \label{proof-meany:induction-hyp}
\norm{z}_2 = \norm{ \bar{Q} y^{\bar{S}}}_2 \leq \norm{y^{\bar{S}}}_2 \sqrt{1 - \det(\bar{F}'\bar{F})}.
\end{equation} 
Similarly, write $v_j = v^{\bar{S}} + v^N$ where $v^{\bar{S}} \in \bar{S}$ and $v^N$ is perpendicular to $\bar{S}$. 

\underline{Case A:} Suppose that $S = \bar{S}$. Then $y = y^{\bar{S}}$. Moreover, since $\bar{F} \in \mathcal{F}$,
\begin{equation}
\norm{ Q y}_2  \leq \norm{ \bar{Q} y}_2 \leq \norm{ y }_2 \sqrt{ 1 - \det(\bar {F} ' \bar{F}) } \leq \norm{y}_2 \sqrt{ 1 - \min_{F \in \mathcal{F}} \det( F'F) }.
\end{equation} 
Thus, the result holds when $S = \bar{S}$. 

\underline{Case B:} Suppose that $S \supsetneq \bar{S}$. Then,
\begin{align}
\norm{Qy}_2^2 &= \norm{ (I - v_j v_j')(z + y^N) }_2^2 = (z + y^N)' (I - v_j v_j') (z + y^N) \\
			  &= \norm{z}_2^2 + \norm{y^N}^2 + \underbrace{2 z' y^N}_{0} - (\underbrace{z' v_j}_{z'v^{\bar{S}}})^2 - 2 \underbrace{z'v_j}_{ z'v^{\bar{S}}} \underbrace{v_j'y^N}_{(v^N)'y^N} - (\underbrace{v_j' y^N}_{{(v^N)'y^N}})^2 \\
			  &= \norm{z}_2^2 + \norm{y^N}^2 - (z' v^{\bar{S}})^2 - 2 z'v^{ \bar{S} } \norm{v^N}_2 \norm{y^N}_2 - \norm{v^N}_2^2 \norm{y^N}_2^2,
\end{align}
where we have made use of $v^N$ and $y^N$ are colinear, implying that their inner product is equal to the product of their norms. Finally, since $-2 z ' v^{\bar{S}} \leq 2 |z' v^{\bar{S}}|$, 
\begin{equation} \label{proof-meany:main-inequality}
\norm{Qy}_2^2 \leq  \norm{z}_2^2 + \norm{y^N}_2^2 - \left( \left| z' v^{\bar{S}} \right| - \norm{v^N}_2 \norm{y^N}_2  \right)^2.
\end{equation}

\underline{Case B(1):} Suppose that $\norm{v^N}_2 \leq \norm{y^{\bar{S}}}_2$. Then,
\begin{align}
\norm{Qy}_2^2 &\leq \norm{z}_2^2 + \norm{y^N}_2^2 - \left( \left| z' v^{\bar{S}} \right| - \norm{v^N}_2 \norm{y^N}_2  \right)^2 \tag{by \cref{proof-meany:main-inequality}} \\
& \leq \norm{z}_2^2 + \norm{y^N}_2^2 \nonumber \\
& \leq \norm{y^{\bar{S}}}_2^2(1 - \det(\bar{F}'\bar{F})) + \norm{y^N}_2^2 \tag{by \cref{proof-meany:induction-hyp}} \\
& = \norm{y}_2^2 - \norm{y^{\bar{S}}}_2^2 \det(\bar{F}'\bar{F}) \nonumber \\
& \leq 1 - \norm{v^N}_2^2 \det(\bar{F}'\bar{F}) \nonumber \tag{ $\norm{y}_2 = 1$ and $\norm{v^N}_2 \leq \norm{y^{\bar{S}}}_2$} \\
& \leq 1 - \min_{F \in \mathcal{F}}\det(F'F),
\end{align}
where, in the last line, we use \cref{lemma:meany-lemma} and, since $S \neq \bar{S}$, $f_{\bar{r}+1} = v_j$, which, in turn, implies $f^N = v^N$.

\underline{Case B(2):} Suppose that $\norm{v^N}_2 > \norm{y^{\bar{S}}}_2$. Since $\norm{v_j}_2 = \norm{y}_2 = 1$, then $\norm{v^{\bar{S}}}_2 \leq \norm{ y^N}_2$. Using these inequalities and \cref{proof-meany:induction-hyp},
\begin{equation}
\norm{y^N}_2 \norm{v^N}_2 \geq \norm{v^{\bar{S}}}_2 \norm{y^{\bar{S}}}_2 \geq \norm{ v^{\bar{S}}}_2 \norm{ z}_2 \geq | z' v^{\bar{S}}|.
\end{equation}
Therefore, 
\begin{equation}
\norm{y^N}_2 \norm{v^N}_2 - | z' v^{\bar{S}}| \geq \norm{y^N}_2 \norm{v^N}_2 - \norm{z}_2 \norm{v^{\bar{S}}}_2 \geq 0.
\end{equation}
Applying this relationship to \cref{proof-meany:main-inequality},
\begin{align*}
\norm{Q y}_2^2 &\leq \norm{z}_2^2 + \norm{y^N}_2^2 - \left(  \norm{y^N}_2 \norm{v^N}_2 - \norm{z}_2 \norm{v^{\bar{S}}}_2 \right)^2 \\
			   &= \norm{z}_2^2 \norm{v^N}_2^2 + \norm{y^N}_2^2 \norm{v^{\bar{S}}}_2^2 + 2 \norm{v^{\bar{S}}}_2 \norm{z}_2 \norm{y^N}_2 \norm{v^N}_2  \\
			   &= \left( \norm{z}_2 \norm{v^N}_2 + \norm{y^N}_2 \norm{v^{\bar{S}}}_2 \right)^2 \\
			   &\leq \left( \sqrt{ 1 - \det( \bar{F}'\bar{F})} \norm{v^N}_2 \norm{y^{\bar{S}}}_2 + \norm{y^N}_2 \norm{v^{\bar{S}}}_2 \right)^2 \tag{by \cref{proof-meany:induction-hyp}} \\
			   &\leq \left( \norm{y^{\bar{S}}}_2^2 + \norm{y^N}_2^2 \right) \left( \norm{v^N}_2^2 ( 1 - \det( \bar{F}'\bar{F})) + \norm{v^{\bar{S}}}_2^2 \right) \tag{by Cauchy-Schwarz} \\
			   &= 1 - \norm{v^N}_2^2 \det( \bar{F}' \bar{F} ) \\
			   &= 1 - \min_{F \in \mathcal{F}} \det( F' F),
\end{align*}
where, in the last line, we use \cref{lemma:meany-lemma} and, since $S \neq \bar{S}$, $f_{\bar{r}+1} = v_j$, which, in turn, implies $f^N = v^N$.

Therefore, from Cases A, B(1) and B(2), we conclude that the result holds.
\end{proof}

\subsection{Main Convergence Result for Row-Action Methods} \label{subsection:no-mem-convergence}

Recall that $w \in \mathbb{R}^n$ is a random variable and $\lbrace w_\ell : \ell+1 \in \mathbb{N} \rbrace$ is a sequence of random variables taking value in $\mathbb{R}^n$ chosen such that $A'w_\ell \in \mathcal{R}(w)$.\footnote{Again, we can avoid this requirement and consider set inclusions below. However, this generalization will require additional, cumbersome notation and there is no practical reason for considering this case.} We will now define a sequence of stopping times $\lbrace \tau_\ell : \ell+1 \in \mathbb{N} \rbrace$ where $\tau_0 = 0$,
\begin{equation} \label{eqn:stop-iter-0}
\tau_1 = \min\lbrace k \geq 0 : \linspan{A'w_0,\ldots,A'w_k} = \mathcal{R}(w) \rbrace,
\end{equation}
and, if $\tau_{\ell-1} < \infty$, we define
\begin{equation} \label{eqn:stop-iter-arbitrary}
\tau_\ell = \min\lbrace k > \tau_{\ell-1} : \linspan{ A'w_{\tau_{\ell-1}+1},\ldots,A'w_{k}} = \mathcal{R}(w) \rbrace,
\end{equation}
else $\tau_\ell = \infty$. As an aside, it is worthwhile to note the commonalities between the definition of $\lbrace \tau_\ell \rbrace$ and the stopping time $T$ from \cref{eqn: stopping time}.

Moreover, whenever the stopping times are finite, we will define the collection, $\mathcal{F}_\ell$, for $\ell \in \mathbb{N}$, that contains all matrices $F$ whose columns are maximal linearly independent subsets of 
\begin{equation}
\left\lbrace \frac{A'w_{\tau_{\ell-1}+1}}{\norm{A'w_{\tau_{\ell-1}+1}}_2},\ldots,\frac{A'w_{\tau_\ell}}{\norm{A'w_{\tau_\ell}}_2} \right\rbrace.
\end{equation}
Moreover, define
\begin{equation} \label{eqn:no-mem-rate}
\gamma_\ell = 1 - \min_{F \in \mathcal{F}_l} \det( F' F).
\end{equation}
Note, it follows by Hadamard's inequality that $\gamma_\ell \in [0,1)$.

\begin{theorem} \label{theorem:no-mem-convergence}
Suppose $Ax=b$ admits a solution $x^*$ (not necessarily unique). Let $w$ be a random variable valued in $\mathbb{R}^n$, and let $\mathcal{R}(w),$ $\mathcal{N}(w)$ and $\mathcal{V}(w)$ be defined as above (see \cref{eqn: row span}). Moreover, let $\lbrace w_\ell : \ell+1 \in \mathbb{N} \rbrace $ be random variables such that $\Prb{ A'w_\ell \in \mathcal{R}(w)} = 1$ for all $\ell+1 \in \mathbb{N}$. Let $x_0 \in \mathbb{R}^d$ be arbitrary and let $\lbrace x_k : k \in \mathbb{N} \rbrace$ be defined as in \cref{eqn:no-memory-iteration}. Then, for any $\ell$, on the event $\lbrace \tau_\ell < \infty \rbrace$,
\begin{equation} \label{theorem-eqn:no-mem-stop-rate}
\norm{x_{\tau_{\ell}+1} - x^* - P_{\mathcal{N}(w)}(x_0 - x^*)}_2^2 \leq \left( \prod_{j=1}^\ell \gamma_j  \right) \norm{ P_{\mathcal{R}(w)}(x_0 - x^*) }_2^2,
\end{equation}
where $\gamma_j$ are defined in \cref{eqn:no-mem-rate} and $\gamma_j \in [0,1)$. Therefore, for any $k$,
\begin{equation}
\norm{ x_k - x^* - P_{\mathcal{N}(w)}(x_0 - x^*) }_2^2 \leq \left( \prod_{j=1}^{L(k)} \gamma_j \right) \norm{ P_{\mathcal{R}(w)}(x_0 - x^*) }_2^2,
\end{equation}
where $L(k) = \max\lbrace \ell : k \geq \tau_\ell + 1 \rbrace$; and where we are on the event $\lbrace \tau_{L(k)} < \infty \rbrace$.
\end{theorem}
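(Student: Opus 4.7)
The plan is to transfer the analysis from iterates to the error $e_k := x_k - x^*$, reducing the iteration to a product of orthogonal projections so that the extended Meany bound applies. Substituting $b = Ax^*$ into \cref{eqn:no-memory-iteration} yields
\begin{equation*}
e_{k+1} = \left( I_d - \frac{A'w_k w_k' A}{\norm{A'w_k}_2^2} \right) e_k = (I_d - v_k v_k') e_k,
\end{equation*}
where $v_k := A'w_k / \norm{A'w_k}_2$ is a unit vector lying in $\mathcal{R}(w)$ by hypothesis. Each factor $I_d - v_k v_k'$ is an orthogonal projection onto $\linspan{v_k}^\perp$; because $v_k \in \mathcal{R}(w)$, this factor fixes $\mathcal{N}(w) = \mathcal{R}(w)^\perp$ pointwise and maps $\mathcal{R}(w)$ into itself. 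Splitting $e_k = P_{\mathcal{N}(w)} e_k + P_{\mathcal{R}(w)} e_k =: n_k + r_k$, a direct calculation gives $n_{k+1} = n_k$, so that $n_k \equiv n_0 = P_{\mathcal{N}(w)}(x_0 - x^*)$, while $r_k \in \mathcal{R}(w)$ obeys the same recursion $r_{k+1} = (I_d - v_k v_k') r_k$.

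Next I would exploit the block structure induced by $\lbrace \tau_\ell \rbrace$ from \cref{eqn:stop-iter-0,eqn:stop-iter-arbitrary}. On the event $\lbrace \tau_\ell < \infty \rbrace$, group the projection factors by block to write $r_{\tau_\ell+1} = Q_\ell Q_{\ell-1} \cdots Q_1 r_0$, where $Q_j$ is the ordered product of $(I_d - v_k v_k')$ over the indices $k$ in the $j$-th block. By construction of $\tau_j$, the unit vectors in block $j$ span $\mathcal{R}(w)$, so \cref{theorem:meany-no-mem} applies to any $y \in \mathcal{R}(w)$ and gives
\begin{equation*}
\norm{Q_j y}_2 \leq \sqrt{\gamma_j}\, \norm{y}_2, \qquad \gamma_j = 1 - \min_{F \in \mathcal{F}_j} \det(F'F).
\end{equation*}
Since each $Q_j$ preserves $\mathcal{R}(w)$, iterating this contraction yields
\begin{equation*}
\norm{r_{\tau_\ell+1}}_2^2 \leq \left( \prod_{j=1}^\ell \gamma_j \right) \norm{r_0}_2^2.
\end{equation*}
Rewriting $r_{\tau_\ell+1} = x_{\tau_\ell+1} - x^* - P_{\mathcal{N}(w)}(x_0 - x^*)$ and $r_0 = P_{\mathcal{R}(w)}(x_0 - x^*)$ recovers \cref{theorem-eqn:no-mem-stop-rate}.

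For a general iteration index $k$ with $\tau_{L(k)} < \infty$, the vector $r_k$ is obtained from $r_{\tau_{L(k)}+1}$ by additional non-expansive orthogonal-projection factors, so $\norm{r_k}_2 \leq \norm{r_{\tau_{L(k)}+1}}_2$ and the rate bound carries through. That $\gamma_j \in [0,1)$ follows from Hadamard's inequality applied to the Gram matrix $F'F$ of linearly independent unit vectors, which forces $0 < \det(F'F) \leq 1$.

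The principal obstacle I anticipate is verifying the invariants needed for \cref{theorem:meany-no-mem} at each block, namely that the input to $Q_j$, i.e., $Q_{j-1} \cdots Q_1 r_0$, lies in $\mathcal{R}(w)$, which is precisely the span of the unit vectors defining $Q_j$. This reduces to the two structural observations above---that every projection factor preserves $\mathcal{R}(w)$ and that the per-block spans exhaust $\mathcal{R}(w)$ on the stopping-time events. Once these invariants are recorded, the remainder is a telescoping contraction argument.
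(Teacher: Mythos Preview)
Your proposal is correct and follows essentially the same route as the paper's proof: rewrite the error recursion as a product of rank-one orthogonal projections, split along $\mathcal{R}(w)\oplus\mathcal{N}(w)$ so the $\mathcal{N}(w)$-component is frozen at $P_{\mathcal{N}(w)}(x_0-x^*)$, then apply \cref{theorem:meany-no-mem} block-by-block at the stopping times $\tau_\ell$ and use non-expansiveness of the remaining factors for a general $k$. The paper phrases the block contraction as an induction on $\ell$ rather than writing $Q_\ell\cdots Q_1$ explicitly, but the content is identical.
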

\begin{proof}
From the basic iteration stated in \cref{eqn:no-memory-iteration}, we have
\begin{equation} \label{proof-eqn:no-mem-iteration}
x_{k+1} - x^* = x_k - x^* - \frac{A' w_k w_k' A}{\norm{A'w_k}_2^2} (x_k - x^*) = \left( I - \frac{A'w_k w_k'A}{\norm{A'w_k}_2^2} \right) (x_k - x^*).
\end{equation}
Iterating on this relationship, we conclude
\begin{equation}
x_{k+1} - x^* = \left( I - \frac{A'w_k w_k'A}{\norm{A'w_k}_2^2} \right) \cdots \left( I - \frac{A'w_0 w_0'A}{\norm{A'w_0}_2^2} \right) (x_0 - x^*).
\end{equation} 
Moreover, by assumption, $A'w_\ell \in \mathcal{R}(w)$ with probability one, which implies that $A'w_\ell \perp \mathcal{N}(w)$. Therefore,
\begin{equation} \label{proof-eqn:decomposed-iteration}
x_{k+1} - x^* = P_{\mathcal{N}(w)}(x_0 - x^*) + \left( I - \frac{A'w_k w_k'A}{\norm{A'w_k}_2^2} \right) \cdots \left( I - \frac{A'w_0 w_0'A}{\norm{A'w_0}_2^2} \right) P_{\mathcal{R}(w)}(x_0 - x^*),
\end{equation}
and $P_{\mathcal{N}(w)}(x_{k} - x^*) = P_{\mathcal{N}(w)}(x_0 - x^*)$.

Note, when $\tau_1$ is finite, then the span of $\lbrace A'w_0,\ldots,A'w_{\tau_1} \rbrace$ is $\mathcal{R}(w)$. Therefore, on the event $\tau_1 < \infty$, \cref{theorem:meany-no-mem} implies that
\begin{equation}
\norm{x_{\tau_1 + 1} - x^* - P_{\mathcal{N}(w)}(x_0 - x^*)}_2^2 \leq \gamma_1 \norm{P_{\mathcal{R}(w)}(x_0 - x^*)}_2^2.
\end{equation}
We now proceed by induction. Suppose \cref{theorem-eqn:no-mem-stop-rate} holds for some $\ell \in \mathbb{N}$. Using \cref{proof-eqn:decomposed-iteration}, for $k > \tau_\ell$,
\begin{equation}
\begin{aligned}
&x_k - x^* - P_{\mathcal{N}(w)}(x_0 - x^*) \\
&\quad = \left( I - \frac{A'w_k w_k'A}{\norm{A'w_k}_2^2} \right) \cdots \left( I - \frac{A'w_{\tau_\ell+1} w_{\tau_\ell+1}'A}{\norm{A'w_{\tau_\ell+1}}_2^2} \right) P_{\mathcal{R}(w)}(x_{\tau_\ell+1} - x^*).
\end{aligned}
\end{equation}
Now, when $k = \tau_{\ell+1} +1$, the conditions of \cref{theorem:meany-no-mem} are satisfied. Therefore,
\begin{equation}
\begin{aligned}
\norm{x_{\tau_{\ell+1} + 1} - x^* - P_{\mathcal{N}(w)}(x_0 - x^*)}_2^2 &\leq \gamma_{\ell+1} \norm{ P_{\mathcal{R}(w)}(x_{\tau_\ell +1} - x^*) }_2^2 \\ 
&= \gamma_{\ell+1} \norm{x_{\tau_\ell + 1} - x^* - P_{\mathcal{N}(w)}(x_0 - x^*)}_2^2.
\end{aligned}
\end{equation}
By applying the induction hypothesis, we conclude that \cref{theorem-eqn:no-mem-stop-rate} holds on the event $ \lbrace \tau_{\ell+1} < \infty \rbrace$.

Now, for an orthogonal projection matrix, $I- vv'$, $\norm{I - vv'}_2 = 1$. The bound on $x_k - x^* - P_{\mathcal{N}}(x_0 - x^*)$ follows by applying this fact and the definition of $L(k)$.
\end{proof}

As an analogue of \cref{corollary: criteria system solution}, we have the following characterization of whether $\lim_{k \to \infty} x_k$ solves the system $Ax=b$.

\begin{corollary}
Under the setting of \cref{theorem:no-mem-convergence}, on the events $\bigcap_{\ell=0}^\infty \lbrace \tau_\ell < \infty \rbrace$ and $\lbrace \lim_{\ell \to \infty} \prod_{j=0}^\ell \gamma_j = 0 \rbrace$, $\lim_{k \to \infty} Ax_k = b$ if and only if $P_{\mathcal{V}(w)} x_0 = P_{\mathcal{V}(w)} x^*$.
\end{corollary}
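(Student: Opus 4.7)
The plan is to combine the rate bound from \cref{theorem:no-mem-convergence} with the decomposition of $\mathcal{N}(w)$ used in the proof of \cref{corollary: criteria system solution}. First, on the event $\bigcap_{\ell=0}^\infty \{\tau_\ell < \infty\}$ we have $L(k) \to \infty$ as $k \to \infty$, so the bound
$$\norm{x_k - x^* - P_{\mathcal{N}(w)}(x_0 - x^*)}_2^2 \leq \left(\prod_{j=1}^{L(k)} \gamma_j\right) \norm{P_{\mathcal{R}(w)}(x_0 - x^*)}_2^2$$
combined with the assumption $\prod_{j=0}^\ell \gamma_j \to 0$ yields $\lim_{k \to \infty} x_k = x^* + P_{\mathcal{N}(w)}(x_0 - x^*)$. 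Multiplying through by $A$ (which is continuous, hence interchangeable with the limit) gives
$$\lim_{k \to \infty} A x_k = A x^* + A P_{\mathcal{N}(w)}(x_0 - x^*) = b + A P_{\mathcal{N}(w)}(x_0 - x^*).$$
Hence $\lim_{k \to \infty} A x_k = b$ if and only if $A P_{\mathcal{N}(w)}(x_0 - x^*) = 0$.

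Next, I would reduce this condition to the $\mathcal{V}(w)$ component exactly as in the proof of \cref{corollary: criteria system solution}. Since $\mathcal{R}(w) \subset \mathrm{row}(A)$ and $\mathcal{V}(w) \oplus \mathcal{R}(w) = \mathrm{row}(A)$ with $\mathcal{V}(w) \perp \mathcal{R}(w)$, the orthogonal complement satisfies $\mathcal{N}(w) = \mathcal{V}(w) \oplus \nullsp{A}$, and therefore $P_{\mathcal{N}(w)} = P_{\mathcal{V}(w)} + P_{\nullsp{A}}$. Since $A P_{\nullsp{A}} = 0$, the condition $A P_{\mathcal{N}(w)}(x_0 - x^*) = 0$ collapses to $A P_{\mathcal{V}(w)}(x_0 - x^*) = 0$.

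Finally, I would apply the Moore--Penrose argument from \cref{proof-eqn:V-in-row-A}: because $\mathrm{range}(P_{\mathcal{V}(w)}) \subset \mathrm{row}(A)$ and $A^+ A$ acts as the identity on $\mathrm{row}(A)$, left-multiplying $A P_{\mathcal{V}(w)}(x_0 - x^*) = 0$ by $A^+$ yields $P_{\mathcal{V}(w)}(x_0 - x^*) = 0$, i.e.\ $P_{\mathcal{V}(w)} x_0 = P_{\mathcal{V}(w)} x^*$. The converse is immediate by running the same chain backwards. There is no real obstacle here; the only subtlety worth flagging is ensuring that continuity of $A$ justifies passing the limit inside, and that the indexing discrepancy between the product $\prod_{j=1}^{L(k)} \gamma_j$ appearing in \cref{theorem:no-mem-convergence} and $\prod_{j=0}^\ell \gamma_j$ appearing in the hypothesis is harmless (they differ only by the constant factor $\gamma_0$, so either one tends to zero iff the other does).
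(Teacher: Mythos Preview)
Your proposal is correct and follows essentially the same route as the paper's own proof: pass to the limit using \cref{theorem:no-mem-convergence}, decompose $P_{\mathcal{N}(w)}$ as $P_{\mathcal{V}(w)} + P_{\nullsp{A}}$, kill the null-space piece with $A$, and finish with the Moore--Penrose argument from \cref{proof-eqn:V-in-row-A}. Your extra care about $L(k)\to\infty$ and the harmless indexing mismatch between $\prod_{j=0}^\ell$ and $\prod_{j=1}^\ell$ is justified and does not depart from the paper's argument.
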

\begin{proof}
By \cref{theorem:no-mem-convergence}, and on the events $\bigcap_{\ell=0}^\infty \lbrace \tau_\ell < \infty \rbrace$ and $\lbrace \lim_{\ell \to \infty} \prod_{j=1}^\ell \gamma_j = 0 \rbrace$,
\begin{equation}
\lim_{k \to \infty} x_k = x^* + P_{\mathcal{N}(w)}(x_0 - x^*) = x^* + P_{\ker(A)} (x_0 - x^*) + P_{\mathcal{V}(w)} (x_0 - x^*).
\end{equation}
Therefore, $\lim_{k \to \infty} Ax_k = b + AP_{\mathcal{V}(w)} (x_0 - x^*)$, which implies $\lim_{k \to \infty} Ax_k = b$ if and only if $AP_{\mathcal{V}(w)} x_0 = AP_{\mathcal{V}(w)} x^*$. Clearly, if $P_{\mathcal{V}(w)} x_0 = P_{\mathcal{V}(w)}x^*$, then $AP_{\mathcal{V}(w)}x_0 = AP_{\mathcal{V}(w)}x^*$. Now, since $\mathcal{V}(w) \subset \mathrm{row}(A)$, if $AP_{\mathcal{V}(w)} x_0 = AP_{\mathcal{V}(w)} x^*$, then $P_{\mathcal{V}(w)}x_0 = P_{\mathcal{V}(w)} x^*$ follows from \cref{proof-eqn:V-in-row-A}.
\end{proof}

\subsection{Main Convergence Result for Column-Action Methods} \label{subsection:no-mem-column}

For the family of methods specified by \cref{eqn:no-memory-column-iteration}, we will follow an almost identical proof except on the residual rather than the error. Specifically, if we let $r_k = Ax_k - b$, then \cref{eqn:no-memory-column-iteration} implies
\begin{equation} \label{eqn:no-mem-col-residual-iter}
r_{k+1} = Ax_{k+1} - b = Ax_k - b - \frac{A w_k w_k' A'}{\norm{A w_k}_2^2} ( Ax_k - b) = \left( I - \frac{A w_k w_k' A'}{\norm{A w_k}_2^2} \right) r_k.
\end{equation}
Thus, we will see two changes in the proof. First, we will see that see that $r_k$ for column-action methods will take the place of $x_k - x^*$ for row-action methods. Second, we already see that $Aw_k$ in \cref{eqn:no-mem-col-residual-iter} has taken the place of $A'w_k$ in \cref{proof-eqn:no-mem-iteration}. Owing to this latter issue, we will need to specify analogues of $\mathcal{R}(w)$, $\mathcal{N}(w)$ and $\mathcal{V}(w)$. 

Let $w \in \mathbb{R}^d$ be a random variable, and let
\begin{equation} \label{eqn:left-col-null}
\mathcal{L}(w) = \mathrm{span}\left[ z \in \mathbb{R}^n : \Prb{ z' A w = 0 } = 1 \right]~\mathrm{and}~ \mathcal{C}(w) = \mathcal{L}(w)^\perp.
\end{equation}
Just as $\mathcal{N}(w)$ generalized the null space of $A$ under the action of an $n$-dimensional random variable from the left, we see that $\mathcal{L}(w)$ is a generalization of the left null space of $A$ under the action of a $d$-dimensional random variable from the right. Analogously, just as $\mathcal{R}(w)$ restricted the row space of $A$ under the action of an $n$-dimensional random variable from the left, we see that $\mathcal{C}(w)$ is a restriction of the column space of $A$ under the action of a $d$-dimensional random variable from the right. Finally, we let $\mathcal{E}(w)$ denote the subspace that is orthogonal to $\mathcal{C}(w)$ such that $\mathcal{E}(w) \oplus \mathcal{C}(w)$ is the column space of $A$. 

With these new definitions, we may proceed just as we do in \cref{subsection:no-mem-convergence}. For a random variable $w \in \mathbb{R}^d$, let $\lbrace w_\ell : \ell+1 \in \mathbb{N} \rbrace$ be a sequence of random variables in $\mathbb{R}^d$ such that $Aw_\ell \in \mathcal{C}(w)$. We will now define a sequence of stopping times $\lbrace \tau_\ell : \ell + 1\in \mathbb{N} \rbrace$ where $\tau_0 = 0$,
\begin{equation}
\tau_1 = \min \lbrace k \geq 0 : \mathrm{span}\left[ Aw_0,\ldots,Aw_k \right]  = \mathcal{C}(w) \rbrace,
\end{equation}
and, if $\tau_{\ell - 1} < \infty$, we define 
\begin{equation}
\tau_\ell = \min \lbrace k > \tau_{\ell-1} : \mathrm{span} \left[ A w_{\tau_{\ell -1} + 1},\ldots, A w_k \right] = \mathcal{C}(w) \rbrace,
\end{equation}
else $\tau_\ell = \infty$. 

Moreover, whenever the stopping times are finite, we will define a collection, $\mathcal{F}_{\ell},$ for $\ell \in \mathbb{N}$, that contains all matrices $F$ whose columns are maximal linearly independent subsets of 
\begin{equation}
\left\lbrace \frac{Aw_{\tau_{\ell-1}+1}}{\norm{Aw_{\tau_{\ell-1}+1}}_2},\ldots,\frac{Aw_{\tau_\ell}}{\norm{Aw_{\tau_\ell}}_2} \right\rbrace.
\end{equation}
We can then define $\gamma_\ell$ just as we do in \cref{eqn:no-mem-rate}. For completeness, we will define it again here so that we reference the appropriate definitions. Define
\begin{equation} \label{eqn:no-mem-col-rate}
\gamma_\ell = 1 - \min_{ F \in \mathcal{F}_\ell} \det( F' F).
\end{equation}

\begin{theorem} \label{theorem:no-mem-column-convergence} 
Suppose $Ax = b$ admits a solution $x^*$ (not necessarily unique). Let $w$ be a random variable valued in $\mathbb{R}^d$, and let $\mathcal{C}(w), \mathcal{L}(w)$ and $\mathcal{E}(w)$ be defined as above (see \cref{eqn:left-col-null}). Moreover, let $\lbrace w_\ell : \ell + 1 \in \mathbb{N} \rbrace$ be random variables such that $\Prb{ A w_\ell \in \mathcal{C}(w)} = 1$ for all $\ell +1 \in \mathbb{N}$. Let $x_0 \in \mathbb{R}^d$ be arbitrary, let $\lbrace x_k : k \in \mathbb{N} \rbrace$ be defined as in \cref{eqn:no-memory-column-iteration}, and define $r_k = Ax_k - b$ for $k + 1 \in \mathbb{N}$. Then, for any $\ell$, on the event $\lbrace \tau_{\ell} < \infty \rbrace$, 
\begin{equation} \label{theorem-eqn:no-mem-col-rate}
\norm{ r_{\tau_{\ell} +1} - P_{\mathcal{L}(w)} r_0 }_2^2 \leq \left( \prod_{j=1}^\ell \gamma_j \right) \norm{ P_{\mathcal{C}(w)} r_0 }_2^2,
\end{equation}
where $\gamma_j$ are defined in \cref{eqn:no-mem-col-rate} and $\gamma_j \in [0,1)$. Therefore, for any $k$,
\begin{equation}
\norm{ r_{k} - P_{\mathcal{L}(w)} r_0 }_2^2 \leq \left( \prod_{j=1}^{L(k)} \gamma_j \right) \norm{ P_{\mathcal{C}(w)} r_0 }_2^2,
\end{equation}
where $L(k) = \max\lbrace \ell: k \geq \tau_{\ell}+1 \rbrace$; and where we are on the event $\lbrace \tau_{L(k)} < \infty \rbrace$.
\end{theorem}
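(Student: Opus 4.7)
The plan is to mirror the proof of \cref{theorem:no-mem-convergence} almost verbatim, replacing the error $x_k - x^*$ with the residual $r_k = Ax_k - b$ and the direction $A'w_k$ with $Aw_k$. The starting observation is that \cref{eqn:no-mem-col-residual-iter} expresses the residual update as multiplication by $I - \frac{Aw_k w_k' A'}{\|Aw_k\|_2^2}$, which is an orthogonal projection onto the hyperplane perpendicular to the unit vector $Aw_k/\|Aw_k\|_2$. Iterating this recursion, I would write
\begin{equation*}
r_{k+1} = \left( I - \frac{Aw_k w_k' A'}{\|Aw_k\|_2^2} \right) \cdots \left( I - \frac{Aw_0 w_0' A'}{\|Aw_0\|_2^2} \right) r_0.
\end{equation*}

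Next, I would split $r_0 = P_{\mathcal{L}(w)} r_0 + P_{\mathcal{C}(w)} r_0$. Since $Aw_\ell \in \mathcal{C}(w)$ with probability one by hypothesis, each $Aw_\ell$ is orthogonal to $\mathcal{L}(w)$, and hence every factor $I - \frac{Aw_\ell w_\ell' A'}{\|Aw_\ell\|_2^2}$ fixes $\mathcal{L}(w)$ pointwise. This gives $P_{\mathcal{L}(w)} r_k = P_{\mathcal{L}(w)} r_0$ for every $k$, and all the contraction happens on the $\mathcal{C}(w)$ component. On the event $\{\tau_1 < \infty\}$, the unit vectors $Aw_0/\|Aw_0\|_2,\ldots,Aw_{\tau_1}/\|Aw_{\tau_1}\|_2$ span $\mathcal{C}(w)$, so \cref{theorem:meany-no-mem} (applied in $\mathbb{R}^n$ with $S = \mathcal{C}(w)$) gives
\begin{equation*}
\|r_{\tau_1+1} - P_{\mathcal{L}(w)} r_0\|_2^2 \leq \gamma_1 \|P_{\mathcal{C}(w)} r_0\|_2^2.
\end{equation*}
The fact that $\gamma_j \in [0,1)$ is immediate from Hadamard's inequality applied to the Gram determinant of a collection of unit vectors.

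The inductive step is essentially identical to the row-action case: on $\{\tau_{\ell+1} < \infty\}$, I restart the clock from index $\tau_\ell + 1$, write
\begin{equation*}
r_{k} - P_{\mathcal{L}(w)} r_0 = \left( I - \frac{Aw_{k-1} w_{k-1}' A'}{\|Aw_{k-1}\|_2^2} \right) \cdots \left( I - \frac{Aw_{\tau_\ell+1} w_{\tau_\ell+1}' A'}{\|Aw_{\tau_\ell+1}\|_2^2} \right) P_{\mathcal{C}(w)} \bigl( r_{\tau_\ell+1} - P_{\mathcal{L}(w)} r_0\bigr),
\end{equation*}
and apply \cref{theorem:meany-no-mem} at $k = \tau_{\ell+1}+1$ to pick up an extra factor $\gamma_{\ell+1}$. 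Composing with the induction hypothesis yields \cref{theorem-eqn:no-mem-col-rate}. Finally, for general $k$, I use that each factor is an orthogonal projection with operator norm one, so $\|r_k - P_{\mathcal{L}(w)} r_0\|_2$ is monotone nonincreasing in $k$, and in particular between $\tau_{L(k)}+1$ and $k$ no further growth occurs, giving the bound with $L(k)$ in the exponent.

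The main obstacle is purely bookkeeping: one must verify carefully that the right analogue of Meany's inequality applies in the ambient space $\mathbb{R}^n$ with the correct subspace $\mathcal{C}(w)$, and that the $\mathcal{L}(w)$ component is preserved by every factor (so that it can be subtracted off cleanly before invoking \cref{theorem:meany-no-mem}). Once these are in place, the argument is a direct transcription of the row-action proof, with $A' \leftrightarrow A$, $x_k - x^* \leftrightarrow r_k$, $\mathcal{R}(w) \leftrightarrow \mathcal{C}(w)$, and $\mathcal{N}(w) \leftrightarrow \mathcal{L}(w)$.
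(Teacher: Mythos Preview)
Your proposal is correct and follows essentially the same route as the paper's own proof: iterate \cref{eqn:no-mem-col-residual-iter}, split off the $\mathcal{L}(w)$ component (which is fixed by every projection factor), apply \cref{theorem:meany-no-mem} on each block $[\tau_{\ell-1}+1,\tau_\ell]$ to pick up the $\gamma_\ell$ factor, and then use the operator-norm-one property of the projections to handle intermediate $k$. The only cosmetic difference is that you write $P_{\mathcal{C}(w)}(r_{\tau_\ell+1} - P_{\mathcal{L}(w)} r_0)$ where the paper writes $P_{\mathcal{C}(w)} r_{\tau_\ell+1}$, but these are identical since $P_{\mathcal{C}(w)} P_{\mathcal{L}(w)} = 0$.
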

\begin{proof}
Iterating on \cref{eqn:no-mem-col-residual-iter}, we conclude 
\begin{equation}
r_{k+1} = \left( I - \frac{A w_k w_k' A'}{\norm{A w_k}_2^2} \right) \cdots \left( I - \frac{A w_0 w_0' A'}{\norm{A w_0}_2^2} \right) r_0.
\end{equation}
Moreover, by assumption, $A w_\ell \in \mathcal{C}(w)$ with probability one, which implies $A w_\ell \perp \mathcal{L}(w)$. Therefore,
\begin{equation} \label{proof-eqn:no-mem-col-decomp}
r_k = P_{\mathcal{L}(w)} r_0 + \left( I - \frac{A w_k w_k' A'}{\norm{A w_k}_2^2} \right) \cdots \left( I - \frac{A w_0 w_0' A'}{\norm{A w_0}_2^2} \right) P_{\mathcal{C}(w)} r_0,
\end{equation}
and $P_{\mathcal{L}(w)}r_k = P_{\mathcal{L}(w)} r_0.$

Note, when $\tau_1$ is finite, then the span of $\lbrace Aw_0,\ldots, Aw_{\tau_1} \rbrace$ is $\mathcal{C}(w)$. Therefore, on the event $\tau_1 < \infty$, \cref{theorem:meany-no-mem} implies that
\begin{equation}
\norm{ r_{\tau_1 + 1} - P_{\mathcal{L}(w)} r_0 }_2^2 \leq \gamma_1 \norm{ P_{\mathcal{C}(w)} r_0 }_2^2.
\end{equation}
We now proceed by induction. Suppose \cref{theorem-eqn:no-mem-col-rate} holds for some $\ell \in \mathbb{N}$. Using \cref{proof-eqn:no-mem-col-decomp}, for $k > \tau_\ell$, 
\begin{equation}
r_k - P_{\mathcal{L}(w)}r_0 = \left( I - \frac{Aw_k w_k'A'}{\norm{Aw_k}_2^2} \right) \cdots \left( I - \frac{A w_{\tau_\ell + 1}w_{\tau_\ell + 1}'A'}{\norm{ A w_{\tau_\ell + 1} }_2^2 } \right) P_{\mathcal{C}(w)} r_{\tau_\ell + 1}.
\end{equation}
Now, when $k = \tau_{\ell + 1} + 1$, the conditions of \cref{theorem:meany-no-mem} are satisfied. Therefore,
\begin{equation}
\begin{aligned}
\norm{ r_{\tau_{\ell+1} + 1} - P_{\mathcal{L}(w)} r_0 }_2^2 
&\leq \gamma_{\ell+1} \norm{ P_{\mathcal{C}(w)} r_{\tau_\ell+1}}_2^2 \\
&= \gamma_{\ell+1} \norm{ r_{\tau_\ell + 1} - P_{\mathcal{L}(w)} r_0 }_2^2.
\end{aligned}
\end{equation}
By applying the induction hypothesis, we conclude that \cref{theorem-eqn:no-mem-col-rate} holds on the event $\lbrace \tau_{\ell+1} < \infty \rbrace$. The second part of the result follows readily.
\end{proof}

We have the following characterization of whether $\lim_{k \to \infty} x_k$ solves the system $Ax = b$. 

\begin{corollary}
Under the setting of \cref{theorem:no-mem-column-convergence}, on the events $\lbrace \bigcap_{\ell = 0}^\infty \tau_\ell < \infty \rbrace$, and $\lbrace \lim_{\ell \to\infty } \prod_{j=0}^\ell \gamma_j = 0 \rbrace$, $\lim_{k \to \infty} Ax_k = b$ if and only if $P_{\mathcal{E}(w)} r_0 = 0$.
\end{corollary}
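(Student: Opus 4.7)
The plan is to mirror the structure of the previous corollary (the one for row-action methods), replacing the error-side analysis with the residual-side analysis supplied by \cref{theorem:no-mem-column-convergence}. First, I would apply \cref{theorem:no-mem-column-convergence} on the stated events to conclude that $\|r_k - P_{\mathcal{L}(w)} r_0\|_2 \to 0$, since $\prod_{j=1}^{L(k)} \gamma_j \to 0$ as $k \to \infty$ (because $L(k) \to \infty$ on $\bigcap_\ell \{\tau_\ell < \infty\}$ and the full product tends to zero). Then $\lim_{k \to \infty} Ax_k = b$ is equivalent to $\lim_{k \to \infty} r_k = 0$, which in turn is equivalent to $P_{\mathcal{L}(w)} r_0 = 0$.

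The remaining step is to reduce the condition $P_{\mathcal{L}(w)} r_0 = 0$ to $P_{\mathcal{E}(w)} r_0 = 0$. For this, I would decompose $\mathcal{L}(w)$ using the definitions in \cref{eqn:left-col-null} and the description of $\mathcal{E}(w)$. Since $\mathcal{C}(w) \oplus \mathcal{E}(w) = \mathrm{range}(A)$ and $\mathcal{L}(w) = \mathcal{C}(w)^\perp$ in $\mathbb{R}^n$, the full orthogonal decomposition of $\mathbb{R}^n$ is $\mathcal{C}(w) \oplus \mathcal{E}(w) \oplus \mathrm{range}(A)^\perp$, so
\begin{equation}
P_{\mathcal{L}(w)} = P_{\mathcal{E}(w)} + P_{\mathrm{range}(A)^\perp}.
\end{equation}
Because $Ax = b$ has a solution $x^*$, we can write $r_0 = Ax_0 - b = A(x_0 - x^*) \in \mathrm{range}(A)$, so $P_{\mathrm{range}(A)^\perp} r_0 = 0$. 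Therefore $P_{\mathcal{L}(w)} r_0 = P_{\mathcal{E}(w)} r_0$, and the equivalence follows.

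No step looks like a genuine obstacle; the only mild subtlety is verifying the decomposition $\mathcal{L}(w) = \mathcal{E}(w) \oplus \mathrm{range}(A)^\perp$ from the ambient definitions, but this is immediate from orthogonality of the three subspaces and a dimension count. Once that is in place, the fact that $r_0$ lies in $\mathrm{range}(A)$ does all the work, exactly analogous to how \cref{proof-eqn:V-in-row-A} was used in the row-action corollary.
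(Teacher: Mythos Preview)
Your proposal is correct and follows essentially the same argument as the paper: apply \cref{theorem:no-mem-column-convergence} to obtain $\lim_k r_k = P_{\mathcal{L}(w)} r_0$, then use the orthogonal decomposition $\mathcal{L}(w) = \mathcal{E}(w) \oplus \mathrm{range}(A)^\perp$ together with $r_0 \in \mathrm{range}(A)$ to reduce $P_{\mathcal{L}(w)} r_0$ to $P_{\mathcal{E}(w)} r_0$. The paper writes $\ker(A')$ in place of your $\mathrm{range}(A)^\perp$, but these are the same subspace.
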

\begin{proof}
On the events $\lbrace \bigcap_{\ell = 0}^\infty \tau_\ell < \infty \rbrace$ and $\lbrace \lim_{\ell \to\infty } \prod_{j=0}^\ell \gamma_j = 0 \rbrace$, \cref{theorem:no-mem-column-convergence} implies
\begin{equation}
\lim_{k \to \infty} r_k = P_{\mathcal{L}(w)} r_0.
\end{equation}
It straightforwardly follows that $\lim_{k \to \infty} Ax_k = b$ if and only if $P_{\mathcal{L}(w)} r_0 = 0$. 

Moreover, by construction of $\mathcal{L}(w)$, we have that $\mathcal{L}(w) = \mathcal{E}(w) \oplus \ker(A')$. Thus,
\begin{equation}
P_{\mathcal{L}(w)} r_0 = P_{\mathcal{E}(w)} r_0 + P_{\ker(A')} r_0.
\end{equation}
Since the left null space of $A$ is orthogonal to the column space of $A$, and  $r_0$ is in the column space of $A$ because $Ax=b$ is consistent, we have that $P_{\mathcal{L}(w)} r_0 = P_{\mathcal{E}(w)} r_0$.
\end{proof}

\subsection{Common, Non-Adaptive Sampling Patterns} \label{subsection:no-mem-sampling-patterns}

Just as for \cref{theorem: terminal iteration characterization}, \cref{theorem:no-mem-convergence,theorem:no-mem-column-convergence} are general results that characterizes convergence for \textit{any} sampling scheme. Following the discussion in \cref{subsection:sampling-times}, the sampling scheme should depend on the hardware environment and the problem setting. Despite this, the two sampling patterns studied in \cref{subsection:sampling-times} form a foundation for most sampling schemes in practice and warrant a precise analysis. After this analysis, certain adaptive schemes have become popular and are also analyzed in a generic manner. We will focus on the case of row-action methods (corresponding to \cref{theorem:no-mem-convergence}) as the column-action results (corresponding to \cref{theorem:no-mem-column-convergence}) are nearly identical.

The first result provides a proof of convergence when we sample without replacement from a finite population. We note that the result is quite general and does not depend on the nature of the sampling without replacement or the dependency of the samples whenever the finite population is exhausted. As a result, the bounds are loose, which may be unsatisfying. Should particular sampling patterns become sufficiently important to warrant a more detailed analysis, we will do so in future work.

\begin{proposition} \label{theorem-no-mem-w-o-replacement}
Let $w$ and $\lbrace w_\ell : \ell +1 \in \mathbb{N} \rbrace$ be defined as in \cref{lemma: rand perm sampling}. Then, under the setting of \cref{theorem:no-mem-convergence}, 
\begin{enumerate}
\item $\tau_\ell - \tau_{\ell-1} \leq 2N$ for all $\ell \in \mathbb{N}$, and 
\item $\lim_{\ell \to \infty} \prod_{j=1}^\ell \gamma_j = 0$. 
\end{enumerate}
Moreover, $\gamma_j$ are uniformly bounded by $\gamma \in [0,1)$ that depends on $\lbrace A'W_1,\ldots,A'W_N \rbrace$. Therefore, with probability one,
\begin{equation}
\norm{ x_{2N\ell} - x^* - P_{\mathcal{N}(w)}(x_0 - x^*) }_2^2 \leq \gamma^\ell \norm{ P_{\mathcal{R}(w)}(x_0 - x^*) }_2^2.
\end{equation}
\end{proposition}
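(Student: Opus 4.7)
The plan is to break the proposition into three deterministic assertions that, once established for every sample path, automatically hold with probability one: (i) the stopping-time gaps $\tau_\ell - \tau_{\ell-1}$ are uniformly bounded by $2N$; (ii) the per-block factors $\gamma_j$ are uniformly bounded by some $\gamma\in[0,1)$ that depends only on the finite set $\lbrace A'W_1,\ldots,A'W_N\rbrace$; and (iii) these two facts combine with \cref{theorem:no-mem-convergence} to yield the stated geometric rate at iterations $2N\ell$. I would prove them in this order and then assemble.

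For (i), I would exploit the structure of sampling without replacement from an exhausting pool. Each ``cycle'' produces a random permutation of $\lbrace W_1,\ldots,W_N\rbrace$, so the infinite sequence $\lbrace w_\ell\rbrace$ is a concatenation of permutation blocks of length $N$. Any window $\lbrace w_{\tau_{\ell-1}+1},\ldots,w_{\tau_{\ell-1}+2N}\rbrace$ of length $2N$ must contain at least one entire block (the block that begins at the first reset occurring after $\tau_{\ell-1}$, which is reached within $N$ steps), and therefore contains each $W_j$ at least once. The argument used inside \cref{lemma: rand perm sampling} then shows $\linspan{A'w_{\tau_{\ell-1}+1},\ldots,A'w_{\tau_{\ell-1}+2N}}\supset \linspan{A'W_1,\ldots,A'W_N}\supset \mathcal{R}(w)$, forcing $\tau_\ell\leq \tau_{\ell-1}+2N$.

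For (ii), I would observe that each set defining $\mathcal{F}_j$ is a subset of the finite set of unit vectors $\mathcal{U}=\lbrace A'W_i/\norm{A'W_i}_2 : i=1,\ldots,N,\ A'W_i\neq 0\rbrace$. Hence there are only finitely many possible realizations of $\mathcal{F}_j$, independently of $j$, and for each such realization $\min_{F\in\mathcal{F}_j}\det(F'F)>0$ because the columns of $F$ are by construction linearly independent (so the Gram determinant is strictly positive). Taking a minimum over this finite collection of positive numbers yields a constant $c>0$ such that $\min_{F\in\mathcal{F}_j}\det(F'F)\geq c$ for all $j$, hence $\gamma_j\leq \gamma := 1-c\in [0,1)$ deterministically. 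This also gives $\prod_{j=1}^\ell \gamma_j\leq \gamma^\ell\to 0$.

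For (iii), telescoping (i) with $\tau_0=0$ gives $\tau_\ell\leq 2N\ell$, so at iteration $k=2N\ell$ we have $L(k)\geq \ell$ (an off-by-one on the window endpoint is harmless and can be absorbed by choosing, if needed, the first index $k\geq \tau_\ell+1$, which satisfies $k\leq 2N\ell+1$). Substituting into \cref{theorem:no-mem-convergence} and bounding $\prod_{j=1}^{L(2N\ell)}\gamma_j\leq \gamma^\ell$ by (ii) delivers the stated inequality. The main obstacle I anticipate is the bookkeeping in step (i): the window of length $2N$ can sit awkwardly across the boundary between two cycles, and one must verify that $2N$ is enough to always straddle a complete cycle regardless of the position of $\tau_{\ell-1}\bmod N$; a short case split on where $\tau_{\ell-1}$ sits within its cycle resolves this cleanly. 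The remaining ingredients are the finiteness observation in (ii) and direct application of the earlier convergence theorem.
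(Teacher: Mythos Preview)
Your proposal is correct and follows essentially the same approach as the paper: for (i) the paper likewise argues that after $\tau_{\ell-1}$ one must at worst exhaust the remainder of the current pool and then one full fresh pool, giving $\tau_\ell-\tau_{\ell-1}\leq 2N$; for (ii) the paper defines the single finite collection $\mathcal{F}$ of matrices with columns drawn from $\lbrace A'W_i/\norm{A'W_i}_2\rbrace$, notes $\mathcal{F}_\ell\subset\mathcal{F}$, and sets $\gamma=1-\min_{F\in\mathcal{F}}\det(F'F)\in[0,1)$; and for (iii) it simply invokes \cref{theorem:no-mem-convergence}. Your extra remarks on the off-by-one and the case split at cycle boundaries are a bit more careful than the paper's presentation, but the argument is the same.
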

\begin{proof}
By the definition of $w$ in \cref{lemma: rand perm sampling}, $\mathcal{R}(w) = \linspan{ A'W_1,\ldots,A'W_N }$. Moreover, by the definitions of $\lbrace w_\ell \rbrace$, we are sampling from $W_1,\ldots,W_N$ without replacement. Then, we are guaranteed that $\lbrace A'w_{\tau_{\ell-1}+1},\ldots,A'w_{\tau_{\ell}} \rbrace$ spans $\mathcal{R}(w)$ if $\lbrace W_1,\ldots,W_N \rbrace \subset \lbrace w_{\tau_{\ell-1}+1},\ldots,w_{\tau_\ell} \rbrace$. Now, suppose that at iteration $\tau_{\ell-1}$, $\mathcal{W} \subset \lbrace W_1,\ldots,W_N \rbrace$ are exhausted. Then, to ensure that $\lbrace W_1,\ldots,W_N \rbrace$ is contained in $\lbrace w_{\tau_{\ell-1}+1},\ldots,w_{\tau_\ell} \rbrace$, we need to exhaust $\mathcal{W}^c$ and then the entire set $\lbrace W_1,\ldots,W_N \rbrace$. Since $|\mathcal{W}^c| \leq N$, we need at most $2N$ more iterations from $\tau_{\ell-1}$ to achieve $\tau_{\ell}$. Therefore, $\tau_{\ell} - \tau_{\ell-1} \leq 2N$. Now, let $\mathcal{F}$ denote all matrices whose columns are maximal linearly independent subsets of
\begin{equation}
\left\lbrace \frac{A'W_1}{\norm{A'W_1}_2},\ldots,\frac{A'W_N}{\norm{A'W_N}_2} \right\rbrace.
\end{equation}
Then, $\mathcal{F}_\ell \subset \mathcal{F}$. Therefore,
\begin{equation}
\gamma_\ell = 1 - \min_{F \in \mathcal{F}_\ell} \det(F'F) \leq 1 - \min_{F \in \mathcal{F}} \det(F'F) =: \gamma.
\end{equation}
It is clear, by Hadamard's inequality, that $\gamma \in [0,1)$. Hence, $\lim_{\ell \to \infty} \prod_{j=1}^\ell \gamma_j \leq \lim_{\ell \to \infty} \gamma^\ell = 0$. The result follows by \cref{theorem:no-mem-convergence}.
\end{proof}

%%%%% Change 1: Discussion of cyclic Kaczmarz 
It is worth pausing here to compare our approach in \cref{theorem-no-mem-w-o-replacement} to previous results for cyclic row-action methods (e.g., \cite{kaczmarz1993},\footnote{This is a translated copy of Kaczmarz's original article, which is published in German \citep{karczmarz1937}.} algebraic reconstruction technique \citep{gordon1970}, cyclic block Kaczmarz). Our use of Meany's inequality to analyze such methods is not novel: Meany's inequality has been used previously to analyze deterministic row-action methods \citep{galantai2005,bai2013,wallace2014} with even more sophisticated refinements of Meany's inequality than what we have here, and a detailed comparison of Meany's inequality and other approaches to analyzing these deterministic variants can be found in \cite{dai2015}. However, our use of Meany's inequality generalizes these deterministic approaches as it (1) allows for an arbitrary transformation (via $\lbrace W_1,\ldots,W_N \rbrace$) of the original system, which has borne out to be a fruitful approach vis-\`{a}-vis matrix sketching \cite{woodruff2014}; and (2) allows for the benefits of random cyclic sampling, which many have observed to be the most productive route in practice and there is mounting evidence in adjacent fields that random cyclic sampling does indeed have practical benefits \citep{lee2019,wright2020}.
While our generalizations are valuable, further improvements are to be found by marrying our randomization framework with the more nuanced refinements of Meany's inequality found in \cite{galantai2005} and \cite{bai2013}, which we leave to future efforts.

The next result revisits the case of independent and identically distributed sampling. The result makes intuitive sense as, for such a situation, we should expect the difference in the stopping times to be independent and identically distributed, which, results in the natural conclusion that $\gamma_\ell$ are also independent and identically distributed. Moreover, we show that eventually, the rate of convergence is almost controlled by $\E{\gamma_1}$ with probability one. We again stress here that the generality of the results naturally makes them quite loose, and we discuss this further after the result.

\begin{proposition} \label{theorem-no-mem-w-replacement}
Let $w$ and $\lbrace w_\ell : \ell+1 \in \mathbb{N} \rbrace$ be defined as in \cref{theorem: iid sampling}. Then, under the setting of \cref{theorem:no-mem-convergence}, $\tau_\ell < \infty$ almost surely for all $\ell \in \mathbb{N}$, and $\lbrace \gamma_\ell : \ell \in \mathbb{N} \rbrace$ are independent and identically distributed such that 
$\E{\gamma_1} = 1 - \E{ \min_{ F \in \mathcal{F}_1} \det(F'F)} < 1$.
Hence, for all $\ell \in \mathbb{N}$ and $\delta > 1$,
\begin{equation}
\Prb{\bigcup_{j=1}^\infty \bigcap_{\ell=j}^\infty \left\lbrace \norm{ x_{\tau_\ell+1} - x^* - P_{\mathcal{N}(w)}(x_0 - x^*) }_2^2 \leq \E{ \gamma_1}^{\frac{\ell}{\delta}}  \norm{ P_{\mathcal{R}(w)}(x_0 - x^*) }_2^2 \right\rbrace} =1,
\end{equation}
where $\E{ \gamma_\ell } \in [0,1)$. Moreover, $\lim_{ \ell \to \infty} \tau_\ell/\ell = \E{ \tau_1}$.
\end{proposition}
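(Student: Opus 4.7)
The plan is to exploit the regenerative structure induced by the i.i.d.\ assumption on $\{w_\ell\}$: because each $\tau_{\ell-1}$ is a stopping time and the samples are i.i.d., the post-$\tau_{\ell-1}$ sequence $\{w_{\tau_{\ell-1}+1},\ldots\}$ is distributed as an independent copy of the original. Consequently, the increments $\{\tau_\ell - \tau_{\ell-1}\}_{\ell\ge 2}$ and the ratios $\{\gamma_\ell\}_{\ell\ge 1}$ form i.i.d.\ sequences, each having the distribution of the corresponding first-round quantity. Under the hypothesis $\Prb{A'w_\ell\in\mathcal{R}(w)}=1$ we have $\tau_1 = T$, so \cref{theorem: iid sampling} supplies both $\tau_1<\infty$ a.s.\ and, via its negative binomial tail bound, $\E{\tau_1}<\infty$. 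From these two facts I obtain $\tau_\ell<\infty$ a.s.\ for every $\ell$, and the final claim $\tau_\ell/\ell\to\E{\tau_1}$ is the strong law of large numbers applied to the i.i.d.\ increments (the $O(1)$ first increment contributing $0$ in the limit).

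To show $\E{\gamma_1}<1$ (equivalently $\E{\min_F \det(F'F)}>0$), I note that on the a.s.\ event $\{\tau_1<\infty\}$ the set $\{A'w_0,\ldots,A'w_{\tau_1}\}$ spans $\mathcal{R}(w)$, so every maximal linearly independent subset among its normalized vectors is a basis of $\mathcal{R}(w)$. Hence every $F\in\mathcal{F}_1$ has full column rank and $\det(F'F)>0$; since $\mathcal{F}_1$ is a finite collection, $\min_{F\in\mathcal{F}_1}\det(F'F)>0$ a.s. Because this random variable is bounded above by $1$ (Hadamard) and a.s.\ strictly positive, its expectation lies in $(0,1]$, which gives $\E{\gamma_1}\in[0,1)$.

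For the probabilistic bound, I would apply the strong law of large numbers to the i.i.d.\ sequence $\{\log\gamma_\ell\}$:
\[
\frac{1}{\ell}\sum_{j=1}^\ell \log\gamma_j \;\xrightarrow[\ell\to\infty]{\mathrm{a.s.}}\; \E{\log\gamma_1}\in[-\infty,0).
\]
Jensen's inequality gives $\E{\log\gamma_1}\le \log\E{\gamma_1}<0$, and because $\log\E{\gamma_1}<0$ and $\delta>1$,
\[
\log\E{\gamma_1}/\delta \;>\; \log\E{\gamma_1}\;\ge\;\E{\log\gamma_1}.
\]
This strict gap implies that a.s.\ eventually $\ell^{-1}\sum_{j=1}^\ell \log\gamma_j \le \log\E{\gamma_1}/\delta$, i.e., $\prod_{j=1}^\ell \gamma_j\le\E{\gamma_1}^{\ell/\delta}$. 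Combining this with the bound in \cref{theorem:no-mem-convergence} on the a.s.\ event $\{\tau_\ell<\infty\}$ yields the claimed eventual inequality.

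The principal technical subtlety lies in the SLLN--Jensen combination. If $\gamma_1=0$ on a set of positive probability, then $\E{\log\gamma_1}=-\infty$ and the bound $\prod\gamma_j\le\E{\gamma_1}^{\ell/\delta}$ holds trivially for all sufficiently large $\ell$ (via truncations, the partial sums of $\log\gamma_j$ diverge to $-\infty$ a.s.). Otherwise, the strict inequality $\E{\log\gamma_1}<\log\E{\gamma_1}/\delta$, which relies on both $\delta>1$ and $\log\E{\gamma_1}<0$, is exactly what converts a.s.\ convergence of averages into an a.s.\ eventual deterministic upper bound on $\prod\gamma_j$. All remaining ingredients---finiteness of $\E{\tau_1}$, the i.i.d.\ regeneration at each $\tau_\ell$, and positivity of $\min_F\det(F'F)$---follow directly from the i.i.d.\ setup together with \cref{theorem: iid sampling}.
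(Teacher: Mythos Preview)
Your proof is correct and shares the paper's essential opening move: both invoke the strong Markov/regeneration property of i.i.d.\ sequences (the paper cites Durrett, Theorem~4.1.3) to conclude that the increments $\tau_\ell-\tau_{\ell-1}$ and the contraction factors $\gamma_\ell$ are i.i.d., and both then invoke \cref{theorem:no-mem-convergence} to reduce the conclusion to controlling $\prod_{j=1}^\ell \gamma_j$.

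Where you diverge is in the tail step. The paper uses Markov's inequality together with independence,
\[
\Prb{\prod_{j=1}^\ell \gamma_j > \E{\gamma_1}^{\ell/\delta}} \;\le\; \frac{\E{\gamma_1}^\ell}{\E{\gamma_1}^{\ell/\delta}} \;=\; \bigl(\E{\gamma_1}^{1-1/\delta}\bigr)^\ell,
\]
and then Borel--Cantelli on the summable right-hand side. You instead apply the strong law of large numbers to $\{\log\gamma_j\}$ and use Jensen plus the strict gap $\E{\log\gamma_1}\le\log\E{\gamma_1}<\log\E{\gamma_1}/\delta$. Both arguments are valid; the paper's route is shorter and, as it notes in a remark, additionally yields an explicit finite-$\ell$ probability bound (hence ``high-probability'' statements), which the bare SLLN does not. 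Your route, on the other hand, is more explicit about the auxiliary claims the paper leaves implicit: you actually argue why $\E{\gamma_1}<1$ (via a.s.\ positivity of $\min_F\det(F'F)$ on $\{\tau_1<\infty\}$), why $\tau_\ell<\infty$ a.s., and why $\tau_\ell/\ell\to\E{\tau_1}$ (SLLN on the increments, with $\E{\tau_1}<\infty$ from the negative-binomial tail in \cref{theorem: iid sampling}). Your handling of the degenerate case $\E{\log\gamma_1}=-\infty$ is also appropriate.
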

\begin{remark}
In the proof below, we also compute the probability for each $j$ for which the conclusion of the preceding result holds. Thus, we can also make the usual ``high-probability'' statements without any additional effort. 
\end{remark}
\begin{proof}
Again, our main workhorse will be \cite[Theorem 4.1.3]{durrett2010}. By this result, conditioned on $\tau_{\ell-1}$, $\lbrace A'w_{\tau_{\ell-1}+1},A'w_{\tau_{\ell-1}+2},\ldots \rbrace$ are independent and identically distributed. By this property, conditioned on $\tau_{\ell-1}$, $\tau_{\ell} - \tau_{\ell-1}$ is independent of $\tau_{\ell-1}$ and have the same distribution for all $\ell \in \mathbb{N}$. We conclude then that since $\gamma_\ell$ is a function of $\lbrace A'w_{\tau_{\ell-1}+1},\ldots,A'w_{\tau_\ell} \rbrace$, then $\gamma_\ell$ are independent and identically distributed. We now conclude that \cref{theorem-eqn:no-mem-stop-rate} holds with probability one by applying \cref{theorem:no-mem-convergence}. For any $\delta > 1$, by Markov's inequality and independence,
\begin{equation}
\Prb{ \prod_{j=1}^\ell \gamma_j > \E{ \gamma_1}^{k/\delta} } \leq \left(\E{ \gamma_1}^{1 - \frac{1}{\delta}} \right)^k.
\end{equation}
Since $\E{ \gamma_1}^{1 - \frac{1}{\delta}} < 1$, the Borel-Cantelli lemma implies that the probability that the product of $\gamma_j$ is eventually less than $\E{\gamma_1}^{k/\delta}$ is one.
\end{proof}

%%%%% Change 2: rate comparison
Here, we again take a moment to compare this result to the results of \cite{richtarik2017}. Namely, we are interested in how the rate of convergence of \cref{theorem-no-mem-w-replacement} compares with the rate of convergence result in \cite{richtarik2017}. To make this comparison, we numerically estimate the theoretical rates of convergence proposed by our result and the result of \cite{richtarik2017} on five matrices from the {\tt MatrixDepot} (as described in \cref{section: experiments}). We show these comparisons in \cref{table:iid-rate-comparison}.  We show these comparisons in \cref{table:iid-rate-comparison}. As expected, the results of \cite{richtarik2017}, which are specialized to the i.i.d. case and apply on average, are much tighter than our general results that apply to more than just i.i.d. case and hold with probability one.

\begin{table}[!htb]
\begin{footnotesize}
\caption{A comparison in the estimated theoretical bounds on the rates of convergence of Gaussian-sketched base randomized methods in $\ell^2$ between this work and the results in \cite{richtarik2017}. The estimates are made by simulation of the theoretical rates. The comparison is made on five different matrices available in the {\tt MatrixDepot}, as described in \cref{section: experiments}. The main message is that the results of \cite{richtarik2017} are tighter than our result, as they apply to the average case. This is expected as our result applies to more than just the i.i.d. sampling case and hold with probability one (asymptotically).}
\label{table:iid-rate-comparison}

\begin{center}
\renewcommand{\arraystretch}{1.3}
\begin{tabular}{@{}lrcrc@{}} \toprule
\multicolumn{5}{c}{\textbf{Comparison of Estimated Theoretical Rates of Convergence}} \\ \midrule 
\multicolumn{1}{c}{Matrix Name} & & \multicolumn{3}{c}{Estimated Rates by Result} \\ 
\cmidrule{3-5}
  & & Theorem 4.8 of \cite{richtarik2017} &  & \cref{theorem-no-mem-w-replacement} \\ \midrule 
deriv2 & & $1 - \bigO{10^{-4}}$ & & $1 - \bigO{10^{-35}}$ \\ 
heat   & &  $1-\bigO{10^{-15}}$ & & $1 - \bigO{10^{-34}}$\\
randsvd & &  $1-\bigO{10^{-15}}$& & $1 - \bigO{10^{-71}}$\\
ursell & & $1-\bigO{10^{-16}}$ & & $1 - \bigO{10^{-161}}$\\
wing & & $1-\bigO{10^{-16}}$ & & $1 - \bigO{10^{-163}}$\\
\bottomrule
\end{tabular}
\end{center}
\end{footnotesize}
\end{table}

\subsection{Adaptive Sampling Schemes} \label{subsection:adaptive-sampling}

To bookend this section, we discuss how our results can be applied to a broad set of adaptive methods that make use of the residual information at a given iterate whether deterministically (e.g., \cite{motzkin1954,gubin1967,lent1976,censor1981}) or randomly (e.g., \cite{nutini2016,bai2018,haddock2019}). In \cref{subsubsection:framework-adaptive}, we will begin with some formalism to establish a general class of adaptive methods, and we then prove convergence and a rate of convergence for such methods. In \cref{subsubsection:specific-examples-adaptive}, we provide concrete examples at the end.

\subsubsection{A General Class and Analysis of Adaptive Methods} \label{subsubsection:framework-adaptive}
To be rigorous, let $x_0 \in \mathbb{R}^d$ and let $\varphi: (A,b,\lbrace x_j : j \leq k \rbrace) \mapsto w_k$ be an adaptive procedure for generating $\lbrace w_k \rbrace$ according to the following procedure: for $k +1 \in \mathbb{N}$, 
\begin{equation} \label{eqn:adaptive-procedure}
\begin{aligned}
w_k &= \varphi(A, b , \lbrace x_j : j \leq k \rbrace) \\
x_{k+1} & = x_{k} + \frac{A'w_k w_k'(b - Ax_k)}{\norm{A' w_k}_2^2}.
\end{aligned}
\end{equation}
\begin{remark}
While we will focus on the base methods of type \cref{eqn:no-memory-iteration}, methods of the type \cref{eqn:no-memory-column-iteration} can be handled analogously.
\end{remark}

While \cref{eqn:adaptive-procedure} is quite general, the vast majority of adaptive schemes make further restrictions that we abstract in the following definitions. 

\begin{definition}[Markovian] \label{defn:markovian}
For a fixed integer $\eta$, an adaptive procedure, $\varphi$, is $\eta$-Markovian if the conditional distribution of $\varphi(A,b, \lbrace x_j : j \leq k \rbrace) $ given $\lbrace x_j : j \leq k \rbrace$ is equal to the conditional distribution of $\varphi(A, b, \lbrace x_j : j \leq k \rbrace)$ given $\lbrace x_j : k - \eta < j \leq k \rbrace$. If a procedure is $1$-Markovian, we will frequently call it Markovian.
\end{definition}

A consequence of the $\eta$-Markovian property is that we can write $\varphi (A , b , \lbrace x_j : j \leq k \rbrace) $ as $\varphi (A, b, \lbrace x_j : k - \eta < j \leq k \rbrace )$. In the case of a $1$-Markovian adaptive procedure, we will simply write $\varphi(A, b, x_k )$. The $1$-Markovian property is readily satisfied for a number of common procedures analyzed in the literature (e.g., maximum residual, maximum distance, etc.), which may suggest that the $\eta$-Markovian notion is irrelevant for general $\eta$. We contend though, that procedures that are memory-sensitive may be more apt to make use of the $\eta$-Markovian property for $\eta > 1$. For example, to demonstrate its potential value, consider a procedure that selects the equations with the top $\eta$ residuals, pulls them into memory, and simply cycles through them deterministically or randomly. Then this simple procedure would be $\eta$-Markovian. However, owing to the lack of such procedures in the literature, we will focus on the $1$-Markovian case for which we can write $\varphi(A, b , x)$, and note that the results and definitions are readily extendable.

The next definition establishes another key property of these adaptive schemes that rely on residuals. 

\begin{definition}[Magnitude Invariance] \label{defn:mag-invar}
Let $H$ represent the set of solutions to $Ax = b$, and let $P_H : \mathbb{R}^d \to H$ represent the projection of a vector onto $H$,\footnote{Since $H$ is a flat, $P_H$ is not guaranteed to be a linear operator.} then an adaptive procedure, $\varphi$, is magnitude invariant if, for any $x \not\in H$ and any $\lambda > 0$, the distribution of $\varphi( A, b, x )$ is equal to the distribution of 
\begin{equation}
\varphi ( A, b, P_H(x) + \lambda [ x - P_H(x) ] ).
\end{equation}
\end{definition}

The magnitude invariance of a number of adaptive methods often follows from the following simple calculation that we state as a lemma for future reference.

\begin{lemma} \label{lemma:residual-mag-invar}
Let $x \in \mathbb{R}^d$ and let $v_1, v_2 \in \mathbb{R}^n$. Then, for any $\lambda > 0$, if $ | v_1' (A x - b) | \geq |v_2' (Ax - b)|$ then
\begin{equation}
| v_1' (A ( P_H(x) + \lambda [ x - P_H(x) ] ) - b) | \geq | v_2' (A ( P_H(x) + \lambda [ x - P_H(x) ] ) - b) |.
\end{equation}
If the hypothesis holds with a strict inequality, then so does the conclusion.
\end{lemma}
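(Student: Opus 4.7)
The plan is to exploit a direct algebraic simplification: the linear transformation $y \mapsto P_H(x) + \lambda[y - P_H(x)]$ fixes $P_H(x)$, which satisfies $AP_H(x) = b$, so the residual at the transformed point reduces to a scalar multiple of the residual at $x$. Once this is established, the inequality is immediate.

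Concretely, I would first note that by definition $P_H(x) \in H$, hence $AP_H(x) = b$, so $AP_H(x) - b = 0$. Then, by linearity of $A$,
\begin{equation}
A\bigl(P_H(x) + \lambda[x - P_H(x)]\bigr) - b = (AP_H(x) - b) + \lambda\bigl(Ax - AP_H(x)\bigr) = \lambda(Ax - b).
\end{equation}
So for any $v \in \mathbb{R}^n$,
\begin{equation}
\bigl| v'\bigl(A(P_H(x) + \lambda[x - P_H(x)]) - b\bigr) \bigr| = \lambda\,| v'(Ax - b)|.
\end{equation}

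Finally, I would apply this identity to both $v_1$ and $v_2$. Since $\lambda > 0$, the factor $\lambda$ multiplies both sides of the hypothesized inequality equally, so $|v_1'(Ax-b)| \geq |v_2'(Ax-b)|$ implies $\lambda|v_1'(Ax-b)| \geq \lambda|v_2'(Ax-b)|$, which is exactly the desired conclusion. The strict-inequality case follows identically since $\lambda > 0$ preserves strictness.

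There is no real obstacle here: the lemma is essentially a one-line calculation relying on the fact that $P_H(x)$ lies in the solution set and $A$ is linear. The only mild subtlety worth flagging explicitly is that, although $P_H$ itself is an affine (not linear) projection when $H$ is a proper affine subspace, this does not matter because we only use the defining property $AP_H(x) = b$. The magnitude-invariance conclusion should therefore be presented as an immediate corollary of the residual identity above.
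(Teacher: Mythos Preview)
Your proposal is correct and follows essentially the same approach as the paper: both observe that $AP_H(x)=b$ implies the residual at the transformed point is $\lambda(Ax-b)$, then multiply the hypothesized inequality through by $\lambda>0$. Your write-up is slightly more detailed (e.g., flagging that $P_H$ is affine but only $AP_H(x)=b$ is used), but the argument is the same one-line calculation.
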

\begin{proof}
Note, $A P_H(x) = b$. Therefore, $ A (P_H(x) + \lambda [ x - P_H(x) ] ) - b = \lambda (A x - b)$. From the hypothesis and $\lambda > 0$, $\lambda | v_1' (Ax - b) | \geq \lambda | v_2'(Ax - b)|$. Also owing to $\lambda > 0$, we can replace the inequalities with strict inequalities.
\end{proof}

Furthermore, the magnitude invariance property has hidden within it an additional feature: the projection of $x$ onto the null space is irrelevant (as we might expect for a procedure depending on the residual). As a result, we can, without losing generality, focus our discussion to $x$ that are in the row space of $A$, which has a unique intersection with $H$ at a point that we denote $x_{\row}^*$. Furthermore, the magnitude invariance property allows us to focus specifically on the Euclidean unit sphere around $x_{\row}^*$, which we denote by $\mathbb{S}(x_{\row}^*)$. This will be essential to the next definition.

The final definition ensures that if \cref{eqn:adaptive-procedure} makes too much progress along one particular subspace, then it must have a nonzero probability of exploring an orthogonal subspace relative to, roughly, the row space of $A$. Before stating this definition, we need to be slightly careful here with using the row space of $A$: if the rows of $A$ can be partitioned into two sets that are mutually orthogonal and $x_0$ is initialized in the span of one of these subsets, then we will never need to visit the other set and, consequently, we will never observe the entire row space of $A$. To account for this, we can focus on the restricted row space,
\begin{equation} \label{eqn:restricted-row}
\rrow(A) = \mathrm{span}[ A_{i,\cdot} : A_{i,\cdot}'x_0 \neq b_i ].
\end{equation}
This definition may seem unnecessary as we can account for this (more generally) via $\mathcal{R}(w)$ by an appropriate choice of $w$. However, in our previous statements, we defined $w$ before specifying $x_0$. Here, we would need to know $x_0$ in order to define $w$ and, thus, $\mathcal{R}(w)$ appropriately. Fortunately, an examination of the preceding results shows that this ordering is not important and the results hold even if $w$ is defined given $x_0$ or even future iterates. With this explanation in hand, we can now state the final definition.

\begin{definition}[Exploratory] \label{defn:exploratory}
Let $x_0 \in \mathbb{R}^d$ and define $\rrow(A)$ accordingly. An adaptive procedure, $\varphi$, is exploratory if for any proper subspace $V \subsetneq \rrow(A)$, there exists $\pi \in (0,1]$ such that
\begin{equation} \label{eqn-defn-exploratory}
\sup_{ x \in \mathbb{S}(x_{\row}^*) \cap V } \Prb{ A' \varphi(A, b, x) \perp V } \leq 1 - \pi.
\end{equation}
\end{definition}

\begin{remark}
If magnitude invariance does not hold, then we could specify the exploratory property to hold for any point in $V$ that is distinct from $x_{\row}^*$. For this modified definition of the exploratory property, the results below would still hold. Then, why should we keep the magnitude invariance property? It is out of practicality. The magnitude invariance property allows us to restrict the verification of the exploratory property to the unit ball, and then we can apply it to any iterate regardless of its distance to the solution.
\end{remark}

For a Markovian, magnitude invariant and exploratory adaptive scheme, $\varphi$, we will need one assumption before stating the result.
\begin{assumption} \label{assumption:max-convergence}
Let $\mathcal{F}$ denote the set of matrices whose columns are normalized, maximal linearly independent subsets of 
\begin{equation}
\left\lbrace A' \varphi(A, b, x_1),\ldots, A' \varphi(A, b, x_d) \right\rbrace,
\end{equation}
where $x_1,\ldots,x_d \in \mathbb{R}^d$ are arbitrary vectors. Suppose, for this choice of $\varphi$,
\begin{equation}
1 - \inf_{ F \in \mathcal{F} } \det( F' F) =: \gamma \in [0,1).
\end{equation}
\end{assumption}

\begin{remark}
As we will see, \cref{assumption:max-convergence} is sufficient for us to uniformly treat the many examples in the literature that are selecting equations or, more generally, are of the form in \cref{lemma: rand perm sampling}, rather than generating linear combinations of them. In the case of linear combinations, we could refine this assumption to account for the nature of the linear combinations as we do in \cref{theorem-no-mem-w-replacement}.
\end{remark}

\begin{theorem} \label{theorem:adaptive-convergence}
Suppose $Ax = b$ admits a solution $x^*$ (not necessarily unique); let $H$ denote the set of all solution, and $P_H$ be the projection onto this flat. Let $x_0 \in \mathbb{R}^d$ and let $\rrow(A)$ be defined as above (see \cref{eqn:restricted-row}). Moreover, let $\varphi$ be a $1$-Markovian, magnitude invariant and exploratory adaptive procedure satisfying \cref{assumption:max-convergence} that generates $\lbrace x_k \rbrace$ and $\lbrace w_k \rbrace$ according to \cref{eqn:adaptive-procedure} and so that $\Prb{ A'w_k \in \rrow(A) } = 1$ for all $k + 1 \in \mathbb{N}$. Then, there exist an increasing sequence of stopping times $\lbrace \tau_\ell : \ell \in \mathbb{N} \rbrace$ such that $\Prb{ E_1 \cup E_2 } = 1$, where:
\begin{enumerate}
\item $E_1$ is the event of iterates that terminate finitely to a solution of $Ax=b$; that is,
\begin{equation}
E_1 = \bigcup_{\ell \in \mathbb{N}} \left\lbrace x_{\tau_{\ell} + 1} \in H \right\rbrace.
\end{equation}
\item $E_2$ is the event of iterates that infinitely converge to a solution of $Ax=b$; that is,
\begin{equation}
E_2 = \bigcap_{\ell \in \mathbb{N}} \left\lbrace \norm{ x_{\tau_{\ell} + 1} - P_H(x_0) }_2^2 \leq \gamma^\ell \norm{ x_0 - P_H(x_0) }_2^2 \right\rbrace.
\end{equation}
Moreover, on $E_1$, $\tau_\ell$ has finite expectation for $\ell$ such that $x_{\tau_{\ell}+1} \in H$. Similarly, on $E_2$, $\tau_{\ell}$ has finite expectation for all $\ell$.
\end{enumerate}

\end{theorem}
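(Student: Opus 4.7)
Proof Proposal:

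The plan is to define stopping times that mark completion of ``epochs'' in which the normalized search directions $A'w_j/\norm{A'w_j}_2$ accumulate to span $\rrow(A)$, apply \cref{theorem:meany-no-mem} together with \cref{assumption:max-convergence} to obtain the geometric factor $\gamma$ per epoch, and use the exploratory property with the Markov structure of $\varphi$ to control the tail of the epoch lengths.

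First, I would use magnitude invariance (together with the invariance under $\ker(A)$-translations that is automatic for residual-dependent $\varphi$) to reduce without loss of generality to $P_H(x_0) = x_\row^*$ and $x_k - x_\row^* \in \rrow(A)$ for all $k$. Setting $\tau_0 = 0$, I would then define
\begin{equation*}
\tau_\ell = \inf\left\{k > \tau_{\ell-1} : x_k \in H \text{ or } \linspan{A'w_{\tau_{\ell-1}+1},\ldots,A'w_k} = \rrow(A)\right\}
\end{equation*}
with the convention $\inf\emptyset = \infty$. On $\{\tau_\ell < \infty\} \cap \{x_{\tau_\ell} \notin H\}$, unrolling \cref{proof-eqn:no-mem-iteration} between consecutive stopping times expresses $x_{\tau_\ell+1} - x_\row^*$ as a product of orthogonal projections applied to $x_{\tau_{\ell-1}+1} - x_\row^* \in \rrow(A)$, where the underlying normalized vectors span $\rrow(A)$ by construction; \cref{theorem:meany-no-mem} combined with \cref{assumption:max-convergence} then yields $\norm{x_{\tau_\ell+1} - x_\row^*}_2^2 \leq \gamma\,\norm{x_{\tau_{\ell-1}+1} - x_\row^*}_2^2$, and iterating back from $\ell$ to $0$ gives the rate declared in $E_2$.

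The main obstacle is to show $\tau_\ell < \infty$ a.s.\ with finite expectation, which is where the exploratory property enters. Using the Markov property of $\varphi$ I would reduce to a single epoch and induct on the dimension $r = \dim(\rrow(A))$ of the space to be spanned. Specifically, at step $k$ within the epoch let $V_k = \linspan{A'w_{\tau_{\ell-1}+1},\ldots,A'w_k}$; if $V_k = \rrow(A)$ the epoch ends, and if $V_k \subsetneq \rrow(A)$ one may use magnitude invariance to pass to a representative of $x_{k+1}$ on $\mathbb{S}(x_\row^*)$, select a proper subspace $\tilde V \subsetneq \rrow(A)$ containing both $V_k$ and this representative (an enlargement of $V_k$ by at most one dimension), and invoke the exploratory property applied to $\tilde V$ to conclude that with probability at least some $\pi > 0$ the next search direction has a component outside $V_k$. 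Since at most $r$ such extensions can occur before the span equals $\rrow(A)$, the length of each epoch is stochastically dominated by a sum of $r$ independent geometric random variables, yielding $\Prb{\tau_\ell < \infty} = 1$ and $\E{\tau_\ell - \tau_{\ell-1}} < \infty$.

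Finally, I would patch the two events: if some iterate enters $H$ then $b - Ax_k = 0$ and the update \cref{eqn:no-memory-iteration} is stationary thereafter, placing us in $E_1$; on the complement, all stopping times are finite a.s.\ by the argument above, and the per-epoch geometric reduction combined with the monotonically non-increasing behavior of $\norm{x_k - x_\row^*}_2$ under orthogonal projection steps (so that the rate controls $x_k$ throughout the interior of an epoch, not just at its endpoints) gives $E_2$. Hence $\Prb{E_1 \cup E_2} = 1$, and the finite-expectation claims for $\tau_\ell$ follow from the geometric tail bound on epoch lengths. The most delicate aspect is the precise use of the exploratory property in the adaptive setting: relating the distribution of $\varphi$ at the actual iterate to its distribution at a representative on $\mathbb{S}(x_\row^*) \cap \tilde V$ via magnitude invariance, and verifying that ``$A'w_{k+1} \not\perp \tilde V$'' together with the one-dimensional enlargement choice of $\tilde V$ indeed forces a strict increase of $V_k$ with positive probability.
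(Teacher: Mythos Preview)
Your high-level plan---define epochs via stopping times, apply \cref{theorem:meany-no-mem} together with \cref{assumption:max-convergence} inside each epoch, and control epoch lengths via the exploratory property---matches the paper's. The essential difference is in \emph{which} stopping times you use, and this difference creates a real gap.

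You set $\tau_\ell$ to be the first time the directions $\lbrace A'w_j \rbrace$ span all of $\rrow(A)$. The paper does not do this. Instead it tracks the span of the \emph{iterate errors},
\[
V_k = \mathrm{span}\bigl[x_\xi - x_{\row}^*,\ldots,x_{\xi+k}-x_{\row}^*\bigr],
\]
and ends the epoch at the first time $\nu$ this span fails to strictly grow under a nontrivial update. The paper then proves (its Claim~2) that at time $\nu$ the nontrivial directions $A'w_j$ span exactly $V_\nu$, which may be a \emph{proper} subspace of $\rrow(A)$, and applies \cref{theorem:no-mem-convergence} with $\mathcal{R}(w)=V_\nu$. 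Since the epoch's initial error lies in $V_\nu$ by construction, Meany plus \cref{assumption:max-convergence} give the factor $\gamma$. The exploratory property is then used only to show that the waiting time for a nontrivial update ($\chi\neq 0$) is geometrically bounded---applied with $V$ equal to the span of the \emph{current} error---so that $\nu$ is a sum of at most $\dim(\rrow(A))$ geometric variables.

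Your argument that the directions eventually span $\rrow(A)$ does not go through as written. Applying the exploratory property to a subspace $\tilde V\supseteq V_k$ containing the current iterate yields, with probability at least $\pi$, that $A'\varphi$ is \emph{not perpendicular to} $\tilde V$---i.e., $A'\varphi$ has a nonzero component \emph{inside} $\tilde V$. Since $V_k\subset\tilde V$, that component may lie entirely in $V_k$; nothing forces $A'\varphi$ to have a component outside $V_k$. The exploratory property, as defined, guarantees that from an iterate in $V$ the next direction is not blind to $V$; it does not guarantee escape from any prescribed subspace of previously-seen directions. You flag this step as ``the most delicate aspect,'' and indeed it is the point where the argument breaks. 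The paper sidesteps the issue entirely by never requiring the directions to fill $\rrow(A)$: it lets the epoch's target subspace $V_\nu$ be whatever the iterate errors happen to span, proves the directions span that same subspace, and applies Meany there.
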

\begin{proof}
Without loss of generality, we will assume $x_0 \in \row(A)$. We will consider the nontrivial case where $x_0 \neq x_{\row}^*$. Note, by the construction of $\rrow(A)$, it must hold then $x_0 - x_{\row}^* \in \rrow(A)$.  To prove the result, we will make three claims of the following rough nature and purpose, which we will make precise below.
\begin{enumerate}
\item Finite termination can only occur at a point $x_{k+1}$ if and only if $A'\varphi(A,b,x_k)$ is parallel to $x_k - x_{\row}^*$. We will use this claim to specify the set $E_1$.
\item For the first time the span of the iterate errors,  $\mathrm{span}[\lbrace x_k - x_{\row}^* \rbrace]$, fails to (non-trivially) increase in dimension, the corresponding $\lbrace A'w_k \rbrace$ up to this iterate span the subspace. As a result, with an appropriate definition of $\mathcal{R}(w)$, we will apply \cref{theorem:no-mem-convergence} to prove a multiplicative decrease in the iterate errors by a factor of $\gamma$.
\item Finally, we show that the first time that the span of the iterate errors fails to (non-trivially) increase in dimension must be finite with probability one and have bounded expectation. By combining the first claim with this claim, we have the property specified by the event $E_1$. By combining this claim with the second claim, we have the property specified by the event $E_2$. By this claim alone, we have that $\Prb{E_1 \cup E_2} = 1$.
\end{enumerate}

To establish our claims, we need some additional notation. Let $\xi$ be an arbitrary finite stopping time and define
\begin{equation}
 V_k = \mathrm{span}\left[ x_{\xi} - x_{\row}^*,x_{\xi+1} - x_{\row}^*,\ldots, x_{\xi+k} - x_{\row}^* \right],
\end{equation}
and $V_{k}^0 = \mathrm{span} \left[ x_{\xi+k} - x_{\row}^* \right]$.
%\begin{equation}
% V_{k}^j = \mathrm{span}\left[ x_{\xi + k-j} - x_{\row}^*, x_{\xi + k-j+1} - x_{\row}^*,\ldots, x_{\xi + k} - x_{\row}^* \right],
%\end{equation}
%where $V_{k}^0 = \mathrm{span} \left[ x_{\xi + k} - x_{\row}^* \right]$; and $V_{k}^j = V_k$ for $j \geq k$. 
Furthermore,  define 
\begin{equation}
\nu = \min \left\lbrace k \geq 0: x_{\xi + k + 1} - x_{\row}^* \in V_{k}, x_{\xi+k+1} \neq x_{\xi+k} \right\rbrace.
\end{equation} 
Note, $\nu$ corresponds to the first time that the span of the iterate errors, starting at $\xi$, fails to non-trivially increase in dimension. It will often be more succinct to specify the non-trivial cases by an indicator variable given by
\begin{equation}
\chi_{\xi+k} = \1{ \varphi(A,b,x_{\xi+k})'A(x_{\xi+k} - x_{\row}^*) \neq 0}.
\end{equation}
By \cref{eqn:adaptive-procedure}, we can readily replace $x_{\xi+k+1} \neq x_{\xi+k}$ in the definition of $\nu$ with $\chi_{\xi+k} = 1$. We now state and prove our claims precisely.

\underline{Claim 1:}
Suppose $x_{\xi} - x_{\row}^* \neq 0$. We claim that $x_{\xi+1} = x_{\row}^*$ if and only if $A'\varphi(A,b,x_\xi) \in V_0 \setminus \lbrace 0 \rbrace$. 

Note, this claim readily follows from
\begin{equation}
x_{\xi+1} - x_{\row}^* = x_{\xi} - x_{\row}^* - \frac{A' \varphi(A,b,x_\xi) \varphi(A,b,x_\xi)'A}{\norm{ A'\varphi(A,b,x_\xi) }_2^2} (x_\xi - x_{\row}^*),
\end{equation}
which, in turn, follows from \cref{eqn:adaptive-procedure}.

\underline{Claim 2:} Suppose $\nu$ is finite and define $V_{\nu}$. We claim that
\begin{equation}
\mathrm{span}\left[ A'\varphi(A, b, x_{\xi} ) \chi_{\xi},\ldots, A'\varphi(A, b, x_{\xi+\nu} ) \chi_{\xi +\nu} \right] = V_{\nu}.
\end{equation}

We first note that $A'\varphi(A, b, x_{\xi + k})\chi_{\xi+k} \in V_{\nu}$ for any $k \in [0,\nu]$ by \cref{eqn:adaptive-procedure}. Therefore, we see that the span of $\Phi = \lbrace A'\varphi(A, b, x_{\xi} )\chi_{\xi},\ldots, A'\varphi(A,b,x_{\xi+\nu}) \chi_{\xi+\nu} \rbrace$ is contained in $V_{\nu}$. To show that $V_{\nu}$ is included in the span of $\Phi$, note that, by the definition of $V_{\nu}$ and by \cref{eqn:adaptive-procedure},
\begin{equation} \label{eqn-proof:repeating-subspace}
V_{\nu} = \mathrm{span}\left[ A'\varphi(A, b, x_{\xi}) \chi_{\xi},\ldots, A'\varphi(A,b,x_{\xi+\nu-1})\chi_{\xi+\nu-1}, x_{\xi+\nu} - x_{\row}^* \right].
\end{equation}
Moreover, the nonzero terms on the generating set on the right hand side of \cref{eqn-proof:repeating-subspace} must be linearly independent, as anything else would contradict the minimality of $\nu$. We are left to show that $x_{\xi+\nu} - x_{\row}^*$ is in the span of $\Phi$. To do this, we perform Gram-Schmidt on the generating set in \cref{eqn-proof:repeating-subspace} starting with $x_{\xi+\nu} - x_{\row}^*$. Denote the remaining vectors in this set $\phi_1,\ldots,\phi_{r-1}$ where $r = \dim( V_{\nu} )$. Then, by the definition of $\nu$, $x_{\xi+\nu+1} - x_{\row}^* \in V_{\nu}$. Therefore, there exist constants $c_0,\ldots,c_{r-1}$ such that
\begin{equation}
\begin{aligned}
& c_0 (x_{\xi+\nu} - x_{\row}^*) + \sum_{j=1}^{r-1} c_j \phi_j \\
&\quad = x_{\xi+\nu} - x_{\row}^* - \frac{A' \varphi(A, b, x_{\xi+\nu})\varphi(A, b, x_{\xi+\nu})'A }{\norm{A'\varphi(A, b, x_{\xi+\nu})}_2^2} ( x_{\xi + \nu} - x_{\row}^* ). 
\end{aligned}
\end{equation} 
If $c_0 \neq 1$, we see that the claim follows. For a contradiction, suppose that $c_0 = 1$. Then $A'\varphi(A,b,x_{\xi+\nu})$ can be written as a linear combination of vectors that are orthogonal to $x_{\xi+\nu} - x_{\row}^*$. This would imply then that $\chi_{\xi+\nu} = 0$, which contradicts the definition of $\nu$. Hence, we see that the claim holds.

\underline{Claim 3:} For any finite stopping time $\xi$, $\nu$ is finite with probability one and has bounded expectation. 

To show this, we define a sequence of stopping times. Define 
\begin{equation}
s_1 = \min \left\lbrace k : \chi_{\xi+k} \neq 0 \right\rbrace,
\end{equation}
and
\begin{equation}
s_{j} = \min \left\lbrace k : \chi_{\xi+s_1+\cdots+s_{j-1} + k } \neq 0 \right\rbrace.
\end{equation}
By the definition of $\nu$, $\nu$ can only take values in $\lbrace \sum_{i=1}^j s_i : j \in \mathbb{N} \rbrace$. Moreover, at each $s_j$, we must either observe $\lbrace \dim( V_{\xi+ s_1 + \cdots + s_j + 1} ) = \dim( V_{\xi + s_1 + \cdots + s_j}) + 1  \rbrace$ or $\lbrace \nu \leq \sum_{i=1}^j s_i \rbrace$. Hence, at most, we see that $\nu$ can only take values in $\lbrace \sum_{i=1}^j s_i : j = 1,\ldots, r \rbrace$ where $r = \dim( \rrow(A) )$. Thus, if we show that each $s_j$ is finite and has bounded expectation, then $\nu$ must be finite and have bounded expectation. By the magnitude invariance, Markovian and exploratory properties, we conclude that
\begin{equation} \label{eqn:s_j}
\begin{aligned}
&\condPrb{s_j = k}{\xi, s_1,\ldots,s_{j-1}, x_{\xi},\ldots,x_{\xi+s_1+\cdots+s_{j-1}+1 } } \\
&\quad \leq ( 1- \pi( V_{s_1+\cdots+s_{j-1} + 1}) )^{k-1} \pi( V_{s_1+\cdots+s_{j-1} + 1}).
\end{aligned}
\end{equation}
Therefore, we see that $s_j$ is finite and has bounded expectation.

\underline{Conclusion:} From these three claims we can now prove the result by induction. 
\paragraph{Base Case} Define $\mathfrak{E}_0^c = \lbrace x_0 \neq x_{\row}^* \rbrace$. On this event, if we take $\xi = 0$ and define $\tau_1$ to be the corresponding $\nu$. On $\mathfrak{E}_0^c$, $\tau_1$ is finite and has finite expectation by Claim 3. Then, we can define, as a subset of $\mathfrak{E}_0^c$,
\begin{equation}
\mathfrak{E}_1 = \lbrace A'\varphi(A,b,x_{\tau_1}) \in V_{\tau_1}^0  \setminus \lbrace 0 \rbrace \rbrace,
\end{equation}
and $\mathfrak{E}_1^c$ to be its relative complement on $\mathfrak{E}_0$.

Note, 
\begin{enumerate}
\item By Claim 1, $\mathfrak{E}_1$ is equivalent to the event $x_{\tau_1 + 1} = x_{\row}^*$ up to a measure zero set. 
\item By Claim 2, \cref{theorem:no-mem-convergence} with $\mathcal{R}(w) = V_{\tau_1}$, and \cref{assumption:max-convergence}, $\mathfrak{E}_1^c$ is contained in the event on which
\begin{equation}
\norm{ x_{\tau_1 + 1} - x_{\row}^* }_2^2 \leq \gamma \norm{ x_0 - x_{\row}^*}_2^2
\end{equation}
up to a measure zero set. 
\end{enumerate}

\paragraph{Induction Hypothesis} Let $\ell \in \mathbb{N}$. On the event $\mathfrak{E}_{\ell-1}^c$, we let $\xi = \tau_{\ell-1} + 1$ and, for the correspondingly defined $\nu$, we can define $\tau_{\ell} = \tau_{\ell-1} + 1 + \nu$. Furthermore, on $\mathfrak{E}_{\ell-1}^c$, $\tau_{\ell}$ is finite and has finite expectation. We can define, as a subset of $\mathfrak{E}_{\ell-1}^c$,
\begin{equation}
\mathfrak{E}_{\ell} = \lbrace A' \varphi(A, b, x_{\tau_{\ell}}) \in V_{\tau_{\ell}}^0 \setminus \lbrace 0 \rbrace\rbrace,
\end{equation}
and $\mathfrak{E}_{\ell}^c$ to be its relative complement on $\mathfrak{E}_{\ell-1}^c$. 

Further,
\begin{enumerate}
\item $\mathfrak{E}_{\ell}$ is equivalent to the event $x_{\tau_{\ell}+1} = x_{\row}^*$ up to a measure zero set.
\item $\mathfrak{E}_{\ell}^c$ is contained in the event on which
\begin{equation}
\norm{ x_{\tau_{\ell} + 1} - x_{\row}^* }_2^2 \leq \gamma \norm{ x_{\ell} - x_{\row}^*}_2^2
\end{equation}
up to a measure zero set.
\end{enumerate}

\paragraph{Generalization} On the event $\mathfrak{E}_{\ell}^c$, we let $\xi = \tau_{\ell}+1$ and, for the correspondingly defined $\nu$, we can define $\tau_{\ell+1} = \tau_{\ell} + 1 + \nu$. On $\mathfrak{E}_{\ell}^c$, $\tau_{\ell+1}$ is finite and has finite expectation by Claim 3. Then, we can define, as a subset of $\mathfrak{E}_{\ell}^c$,
\begin{equation}
\mathfrak{E}_{\ell+1} = \lbrace A' \varphi(A, b, x_{\tau_{\ell+1}}) \in V_{\tau_{\ell+1}}^0 \setminus \lbrace 0 \rbrace\rbrace,
\end{equation}
and  $\mathfrak{E}_{\ell+1}^c$ to be its relative complement on $\mathfrak{E}_{\ell}^c$.

\begin{enumerate}
\item By Claim 1, $\mathfrak{E}_{\ell+1}$ is equivalent to the event $x_{\tau_{\ell+1} + 1} = x_{\row}^*$ up to a measure zero set. 
\item By Claim 2, \cref{theorem:no-mem-convergence} with $\mathcal{R}(w) = V_{\tau_{\ell+1}}$, and \cref{assumption:max-convergence}, $\mathfrak{E}_{\ell+1}^c$ is contained in the event on which
\begin{equation}
\norm{ x_{\tau_{\ell+1} + 1} - x_{\row}^* }_2^2 \leq \gamma \norm{ x_{\tau_\ell} - x_{\row}^*}_2^2
\end{equation}
up to a measure zero set. 
\end{enumerate}

Therefore, by the induction claims,
\begin{equation}
E_1 = \bigcup_{\ell \in \mathbb{N}} \mathfrak{E}_{\ell}
\end{equation}
and
\begin{equation}
E_2 = \bigcap_{\ell \in \mathbb{N}} \mathfrak{E}_{\ell}^c,
\end{equation}
and $\Prb{ E_1 \cup E_2} =1$.
\end{proof}

\subsubsection{Applying our General Theory to Specific Adaptive Schemes} \label{subsubsection:specific-examples-adaptive}

To demonstrate the utility of \cref{theorem:adaptive-convergence}, we show that a number of classical and recent methods satisfy \cref{defn:mag-invar,defn:markovian,defn:exploratory,assumption:max-convergence}. In fact, we will show that a stronger version of \cref{defn:exploratory} holds for these methods, which allows us to explicitly upper bound the elements of $\lbrace \E{\tau_{\ell}}: \ell \in \mathbb{N} \rbrace$ (when they are defined). 

\begin{proposition} \label{proposition:convergence-specific-adaptive}
Suppose $Ax = b$ admits a solution $x^*$. Let $x_0 \in \mathbb{R}^d$ and let $\rrow(A)$ be defined as above (see \cref{eqn:restricted-row}). Suppose that we define $\lbrace x_k \rbrace$ and $\lbrace w_k \rbrace$ according to \cref{eqn:adaptive-procedure} for the following adaptive methods
\begin{enumerate}
\item the maximum residual method \citep[see][Section 4]{agmon1954});
\item the maximum distance method \citep[see][Section 3]{agmon1954};
\item the Greedy Randomized Kaczmarz method \citep[see][Method 2]{bai2018};
\item the Sampling Kaczmarz-Motzkin method \citep[see][Page 4]{haddock2019}.
\end{enumerate}
Then, for each of the above methods, there exists a $\gamma \in [0,1)$ such that the conclusions of \cref{theorem:adaptive-convergence} hold. Moreover, there exists a constant $\kappa$ such that for any finite $\tau_{\ell}$ (as specified in \cref{theorem:adaptive-convergence}), $\E{\tau_{\ell}} \leq \ell \kappa$.
\end{proposition}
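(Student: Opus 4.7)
The plan is to verify, for each of the four methods, all the hypotheses of \cref{theorem:adaptive-convergence} together with \cref{assumption:max-convergence}, and then to establish the additional, stronger property that the exploratory constant $\pi$ in \cref{defn:exploratory} admits a \emph{uniform} positive lower bound independent of the subspace $V$. Once this uniform bound is available, the linear-in-$\ell$ growth of $\E{\tau_\ell}$ will drop out directly from the geometric tail estimate \cref{eqn:s_j} that appears inside the proof of \cref{theorem:adaptive-convergence}.

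The three structural conditions are verified in essentially the same way across the four methods. First, the $1$-Markovian property of \cref{defn:markovian} is immediate: each method's randomness and selection rule depend on the history only through the current residual $r_k = Ax_k - b$, a function of $x_k$ alone. Second, magnitude invariance (\cref{defn:mag-invar}) follows from \cref{lemma:residual-mag-invar}: the map $x \mapsto P_H(x) + \lambda(x - P_H(x))$ scales $r$ by $\lambda>0$, and this preserves both the strict orderings of $\{|r_i|\}$ and of weighted variants such as $\{|r_i|/\|A_{i,}\|_2\}$ (used by the maximum-residual, maximum-distance, and sampling Kaczmarz--Motzkin methods), as well as the sampling probabilities proportional to $r_i^2$ (used by the greedy randomized Kaczmarz method). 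Third, for \cref{assumption:max-convergence}, in each method $A'\varphi(A,b,x)$ is always a positive scalar multiple of some row $A_{i,}$ of $A$, so after normalization the matrices in $\mathcal{F}$ are built from the finite set $\{A_{i,}/\|A_{i,}\|_2 : i=1,\dots,n\}$; the infimum of $\det(F'F)$ over this finite family is strictly positive, giving the required $\gamma \in [0,1)$.

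The key step, and the main obstacle, is the exploratory property with a $V$-independent $\pi$. I will fix a proper $V \subsetneq \rrow(A)$ and a unit $v \in V$, set $x = x_{\row}^* + v$, and observe that $r := Ax - b = Av \neq 0$ because $v \in \row(A) \setminus \{0\}$. Whenever the method selects an index $i^*$ with $r_{i^*} \neq 0$, one has $A_{i^*,}'v = r_{i^*} \neq 0$, so $A'\varphi(A,b,x) = A_{i^*,}$ is not orthogonal to $v$ and hence not orthogonal to $V$. For the maximum residual and maximum distance methods, $i^*$ is deterministically the argmax of $|r_i|$ (respectively $|r_i|/\|A_{i,}\|_2$), so $r_{i^*} \neq 0$ always and $\pi = 1$ holds uniformly. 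For greedy randomized Kaczmarz the support of the sampling distribution is contained in $\{i : r_i \neq 0\}$ by construction, giving $\pi = 1$. For sampling Kaczmarz--Motzkin, which picks the argmax of $|r_i|$ within a uniform random batch of $s$ indices, the selected $i^*$ satisfies $r_{i^*} \neq 0$ whenever the batch contains at least one index with $r_i \neq 0$; writing $m \geq 1$ for the number of such indices, this probability equals $1 - \binom{n-m}{s}/\binom{n}{s} \geq s/n$, a bound independent of $v$ and hence of $V$, so $\pi \geq s/n$ uniformly.

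With a uniform lower bound $\pi > 0$ in hand, \cref{eqn:s_j} gives $\cond{s_j}{\cdots} \leq 1/\pi$ regardless of the history. Combined with the bound $\nu \leq \sum_{i=1}^{r} s_i$ (where $r = \dim(\rrow(A))$) that is established inside the proof of \cref{theorem:adaptive-convergence}, iterated conditional expectation yields $\E{\tau_{\ell+1} - \tau_\ell} \leq 1 + r/\pi$ on the event that $\tau_{\ell+1}$ is defined. Telescoping then produces $\E{\tau_\ell} \leq \ell(1 + r/\pi) =: \ell\kappa$, finishing the argument.
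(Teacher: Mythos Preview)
Your proposal is correct and follows essentially the same approach as the paper: verify the $1$-Markovian, magnitude-invariance, and exploratory properties together with \cref{assumption:max-convergence} for each method, observe that the exploratory constant $\pi$ can be taken uniform in $V$, and then feed this into the geometric tail bound \cref{eqn:s_j} from the proof of \cref{theorem:adaptive-convergence} to get $\E{\tau_\ell}\le \ell\kappa$. Your phrasing of the exploratory step (``$r_{i^*}\neq 0$ implies $A_{i^*,\cdot}'v\neq 0$, hence $A_{i^*,\cdot}\not\perp V$'') is the contrapositive of the paper's (``$A_{i,\cdot}\perp V$ implies $r_i=0$''), and your treatment of \cref{assumption:max-convergence} via finiteness of the normalized-row set is exactly the content of \cref{lemma:convergence-rate-bound}; the only cosmetic difference is that your final constant $\kappa=1+r/\pi$ carries an extra $+1$ from the shift $\tau_{\ell+1}=\tau_\ell+1+\nu$, whereas the paper records $\kappa=r/\pi$ for the individual methods.
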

\begin{remark}
Greedy Randomized Kaczmarz is an example of methods that deterministically determine a threshold over residuals; select the equations whose residuals surpass this threshold; and then randomly select from this set. For this more general class, so long as the threshold satisfies the magnitude invariance property and the random selection does not give any equation less than zero probability, then the result applies to this more general class. Similarly, Sampling Kaczmarz-Motzkin is an example of methods that randomly determine a set of equations; and then deterministically select from this subset of equations based on the residual values. So long as the random subset of equations does not give any equation less than zero probability (that is not already satisfied), then the result will apply to this more general class as well.
\end{remark}
\begin{remark}
Our partial orthogonalization methods (see \cref{alg: rank-one RPM low mem}) do not satisfy the $\eta$-Markovian property, as the partial orthogonalizations have a dependence on every preceding iterate.
\end{remark}

For each method, we show that it satisfies \cref{defn:mag-invar,defn:markovian,defn:exploratory,assumption:max-convergence}. In fact, for each method, we will show that a stronger version of \cref{defn:exploratory} holds.
We will start by establishing several general facts that will be useful in the discussion of each method.

\begin{lemma} \label{lemma:min-max-inner-product}
Let $x_0 \in \row(A)$ and define $\rrow(A)$ as in \cref{eqn:restricted-row}. Then,
\begin{equation}
\inf_{ v \in \rrow(A) \cap \mathbb{S}(0) }  \max _{ i \in \lbrace 1, \ldots, n \rbrace} \frac{|A_{i,\cdot}'v|}{\norm{A_{i,\cdot}}_2} =: c > 0,
\end{equation}
where $\mathbb{S}(0)$ is the Euclidean unit sphere around the zero vector.
\end{lemma}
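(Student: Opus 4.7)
The plan is to recognize that this lemma asserts the strict positivity of a continuous function on a compact set, and then reduce to verifying strict positivity pointwise. Specifically, $\rrow(A)$ is a finite-dimensional linear subspace of $\mathbb{R}^d$, so $\rrow(A) \cap \mathbb{S}(0)$ is closed and bounded, hence compact. The map
\begin{equation*}
f(v) = \max_{i \in \{1,\ldots,n\}:\, A_{i,\cdot} \neq 0} \frac{|A_{i,\cdot}'v|}{\norm{A_{i,\cdot}}_2}
\end{equation*}
is a maximum of finitely many continuous linear functionals (indices with $A_{i,\cdot}=0$ contribute $0$ and can be dropped without affecting the value of the max once the set is shown to be nonempty), hence continuous. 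By the extreme value theorem, the infimum in the statement is attained, so it suffices to show $f(v) > 0$ for every $v \in \rrow(A) \cap \mathbb{S}(0)$.

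First I would record a preliminary observation that ensures $f$ is sensibly defined and that $\rrow(A)$ has a ``clean'' generating set: every index $i$ in the defining set $\mathcal{I} = \{i : A_{i,\cdot}'x_0 \neq b_i\}$ must satisfy $A_{i,\cdot} \neq 0$, since a zero row together with consistency $Ax^\ast = b$ forces $b_i = 0 = A_{i,\cdot}'x_0$, contradicting $i \in \mathcal{I}$. Consequently $\rrow(A) = \linspan{A_{i,\cdot}: i \in \mathcal{I}}$ is spanned by nonzero vectors, and the indexing set in the definition of $f$ above always contains $\mathcal{I}$.

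The pointwise positivity step, which is the crux of the argument, is a short linear-algebra computation. Fix $v \in \rrow(A)$ with $\norm{v}_2 = 1$, and suppose for contradiction that $A_{i,\cdot}'v = 0$ for every $i \in \mathcal{I}$. Writing $v = \sum_{i \in \mathcal{I}} \alpha_i A_{i,\cdot}$, we compute
\begin{equation*}
\norm{v}_2^2 = v'v = \sum_{i \in \mathcal{I}} \alpha_i\, A_{i,\cdot}'v = 0,
\end{equation*}
contradicting $\norm{v}_2 = 1$. Hence some $i \in \mathcal{I}$ gives $A_{i,\cdot}'v \neq 0$, so $f(v) > 0$. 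Combining this with the compactness/continuity reduction above yields $c := \inf_{v} f(v) > 0$.

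I do not expect a serious obstacle here; the lemma is essentially a standard ``continuous function on a compact set'' argument. The only subtleties are (i) the bookkeeping that excludes zero rows from the denominator (handled by the preliminary observation), and (ii) the need to perform the pointwise-positivity argument within $\rrow(A)$ rather than all of $\row(A)$, so that every nonzero $v$ on the sphere actually admits an expansion in the rows indexed by $\mathcal{I}$ used in the contradiction.
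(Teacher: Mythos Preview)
Your proof is correct and follows essentially the same strategy as the paper: compactness of $\rrow(A)\cap\mathbb{S}(0)$, continuity of the maximum, and pointwise positivity established by the observation that a nonzero $v\in\rrow(A)$ cannot be orthogonal to every row generating $\rrow(A)$. The only cosmetic difference is that the paper invokes compactness via an explicit open-cover/finite-subcover construction (building balls $D_v$ on which the maximum exceeds $c_v/2$), whereas you appeal directly to the extreme value theorem; your route is slightly more streamlined, and you are also more careful than the paper about excluding zero rows from the denominator.
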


\begin{proof}
For each $v \in \rrow(A) \cap \mathbb{S}(0)$, we see that 
\begin{equation}
\max_{i \in \lbrace 1,\ldots, n \rbrace } \frac{|A_{i,\cdot}' v | }{\norm{ A_{i,\cdot} }_2 } =: c_v > 0,
\end{equation}
else $v \perp \rrow(A)$ and $v \in \rrow(A) \cap \mathbb{S}(0) \subset \rrow(A)$ and we would have a contradiction since $v \neq 0$. By continuity, we see that we can construct an open ball around each $v \in \rrow(A) \cap \mathbb{S}(0)$, $D_v$, such that 
\begin{equation}
\max_{i \in \lbrace 1, \ldots, n \rbrace } \frac{|A_{i,\cdot}'\tilde v|}{\norm{ A_{i,\cdot}}_2 } > c_v / 2,
\end{equation}
for all $\tilde v \in D_v \cap \mathbb{S}(0)$. Now, $\lbrace D_v : v \in \rrow(A) \cap \mathbb{S}(0) \rbrace$ is an open cover of $\rrow(A) \cap \mathbb{S}(0)$, which is a compact space. Hence, there is a finite subcover given by $\lbrace D_{v_1},\ldots,D_{v_K} \rbrace$. It follows that since each $v \in \rrow(A) \cap \mathbb{S}(0)$ belongs to one of the elements in the subcover, then
\begin{equation}
\inf_{ v \in \rrow(A) \cap \mathbb{S}(0) }  \max _{ i \in \lbrace 1, \ldots, n \rbrace} \frac{|A_{i,\cdot}'v|}{\norm{A_{i,\cdot}}_2} \geq \min \lbrace c_{v_1}/2,\ldots, c_{v_K}/2 \rbrace > 0.
\end{equation}
Therefore $c > 0$.
\end{proof}

\begin{lemma} \label{lemma:convergence-rate-bound}
Let $x_0 \in \row(A)$ and define $\rrow(A)$ as in \cref{eqn:restricted-row}. Let $\Phi = \left\lbrace A_{i,\cdot} : A_{i,\cdot} \in\rrow(A) \right\rbrace$. Let $\mathcal{F}$ be the matrices whose columns are normalized, maximal linearly independent vectors from $\Phi$. Then
\begin{equation}
1 - \min_{F \in \mathcal{F}} \det(F'F) =: \gamma < 1.
\end{equation}
\end{lemma}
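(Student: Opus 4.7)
The plan is to exploit the fact that $\mathcal{F}$ is a finite collection and that each element of $\mathcal{F}$ has strictly positive Gram determinant, so that the minimum over a finite positive set is itself positive and consequently $\gamma < 1$.

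First, I would observe that $\Phi \subset \{A_{1,\cdot}, \ldots, A_{n,\cdot}\}$ is a finite subset of $\mathbb{R}^d$ (it consists of those rows of $A$ that lie in $\rrow(A)$). Since $\rrow(A)$ is by construction spanned by a subset of the rows of $A$, and every such spanning row lies in $\Phi$, we have $\linspan{\Phi} = \rrow(A)$. In particular, $\Phi$ contains at least one basis of $\rrow(A)$, so $\mathcal{F}$ is nonempty. Moreover, because $\Phi$ is finite, there are only finitely many maximal linearly independent subsets of $\Phi$, hence $\mathcal{F}$ is a finite collection of matrices.

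Next, I would argue that for every $F \in \mathcal{F}$, $\det(F'F) > 0$. By definition, the columns of $F$ are linearly independent unit vectors, so $F$ has full column rank. Consequently $F'F$ is symmetric positive definite, whose eigenvalues are all strictly positive, and thus $\det(F'F) > 0$.

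Combining these two observations, since $\mathcal{F}$ is a finite set and each $F \in \mathcal{F}$ satisfies $\det(F'F) > 0$, the minimum
\begin{equation*}
\min_{F \in \mathcal{F}} \det(F'F)
\end{equation*}
is attained and is strictly positive. Therefore $\gamma = 1 - \min_{F \in \mathcal{F}} \det(F'F) < 1$, which is the claim. There is no real obstacle here; the lemma is essentially a pigeonhole / finiteness statement combined with the standard fact that a Gram matrix of linearly independent vectors is nonsingular. The only subtlety worth double-checking is the nonemptiness of $\mathcal{F}$, which follows from the definition of $\rrow(A)$ as the span of a subset of the rows contained in $\Phi$.
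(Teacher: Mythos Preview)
Your proof is correct and follows essentially the same approach as the paper: both arguments rest on the finiteness of $\mathcal{F}$ and the strict positivity of $\det(F'F)$ for each $F\in\mathcal{F}$. The only cosmetic difference is that the paper invokes Hadamard's inequality to obtain $\det(F'F)\in(0,1]$ (thereby also recording $\gamma\geq 0$), whereas you obtain $\det(F'F)>0$ directly from positive definiteness of the Gram matrix; your additional check that $\mathcal{F}$ is nonempty is a welcome detail the paper leaves implicit.
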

\begin{proof}
There are only a finite number of matrices in $\mathcal{F}$ up to column permutations. Therefore, we can choose the $F \in \mathcal{F}$ that minimizes $\det(F'F)$. By Hadarmard's inequality, $\det(F'F) \in (0,1]$, which implies that $\gamma \in [0,1)$.
\end{proof}

\paragraph{Maximum Residual Method.} In the maximum residual method, $\varphi(A,b,x)$ is the standard basis vector in $\mathbb{R}^n$, $\lbrace e_1,\ldots,e_n\rbrace$, that solves
\begin{equation}
\max_{e \in \lbrace e_1,\ldots,e_n \rbrace} |e'(Ax - b)|.
\end{equation}

\underline{$1$-Markovian:} It follows from the definition of the maximum residual method that it only relies on the current iterate to evaluate $\varphi$. Therefore, it is $1$-Markovian.

\underline{Magnitude Invariance:} By \cref{lemma:residual-mag-invar}, it follows that $\varphi(A,b,x)$ is magnitude invariant.

\underline{Exploratory:} Consider any $A_{i,\cdot} \perp V$. Then, $0 = A_{i,\cdot}'(x - x_{\row}^*) = A_{i,\cdot}'x - b_i$. Therefore, we have that the only equations whose residuals are non-zero are the ones such that $P_{V} A_{i,\cdot} \neq 0$, and there is at least one such equation by \cref{lemma:min-max-inner-product}. Therefore,
\begin{equation}
\sup_{ x \in \mathbb{S}(x_{\row}^*) \cap V } \Prb{ A'\varphi(A,b,x) \perp V } = 0.
\end{equation}
That is, we satisfy the exploratory property in a stronger manner:
\begin{equation}
\sup_{V \subsetneq \rrow(A)} \sup_{ x \in \mathbb{S}(x_{\row}^*) \cap V } \Prb{ A'\varphi(A,b,x) \perp V } = 0.
\end{equation}

With these three properties verified and by \cref{lemma:convergence-rate-bound}, the conditions of \cref{theorem:adaptive-convergence} are satisfied and the result holds. The only thing left to show is that $\E{ \tau_{\ell}}$ are bounded by some $\ell \kappa$. By the proof of \cref{theorem:adaptive-convergence}, it is enough to bound the conditional expectations of $s_j$ in \cref{eqn:s_j}. Given that $\pi = 1$ for all $V \subsetneq \rrow(A)$, 
\begin{equation}
\condPrb{s_j = 1}{\xi, s_1,\ldots,s_{j-1}, x_{\xi},\ldots,x_{\xi+s_1+\cdots+s_{j-1}+1 } } = 1.
\end{equation}
Hence, $\nu \leq \dim(\rrow(A))$. Thus, $\E{\tau_{\ell}} \leq \ell \dim( \rrow(A))$. $\quad\blacksquare$

\paragraph{Maximum Distance Method.} In the maximum distance method, $\varphi(A,b,x)$ is the standard basis vector in $\mathbb{R}^n$ that solves
\begin{equation}
\max_{ e \in \lbrace e_1,\ldots,e_n \rbrace} \frac{|e' (Ax - b) |}{\norm{ A'e }_2^2}.
\end{equation}

\underline{$1$-Markovian:} It follows from the definition of the maximum distance method that it only relies on the current iterate to evaluate $\varphi$. Therefore, it is $1$-Markovian.

\underline{Magnitude Invariance:} Note, \cref{lemma:residual-mag-invar} still holds if we were to divide by the norm squared of $A_{i,\cdot}$. It follows that the maximum distance method is magnitude invariant.

\underline{Exploratory:} Just as in the maximum residual method, if $A_{i,\cdot}$ that is orthogonal to a subspace $V$, then $A_{i,\cdot}'x - b_i = 0$ for any $x \in V \cap \mathbb{S}(x_{\row}^*)$. Moreover, by \cref{lemma:min-max-inner-product}, there is at least one equation such that $A_{j,\cdot}'x - b \neq 0$ for all $x \in V \cap \mathbb{S}(x_{\row}^*)$. Hence, the maximum distance method satisfies a stronger version of the exploratory condition, namely,
\begin{equation}
\sup_{V \subsetneq \rrow(A)} \sup_{ x \in \mathbb{S}(x_{\row}^*) \cap V } \Prb{ A'\varphi(A,b,x) \perp V } = 0.
\end{equation}

By the same argument as above, \cref{theorem:adaptive-convergence} follows. Similarly, $\E{ \tau_{\ell}} \leq \ell \dim( \rrow(A) )$. $\quad\blacksquare$

\paragraph{Greedy Randomized Kaczmarz.} In \cite{bai2018} (Method 2), a residual threshold is selected given by 
\begin{equation} \label{eqn:grk-threshold}
\frac{1}{2}\left( \frac{1}{\norm{Ax - b}_2^2} \max_{ e \in \lbrace e_1,\ldots,e_n \rbrace} \frac{|e' (Ax - b) |^2}{\norm{ A'e }_2^2} + \frac{1}{\norm{A}_F^2} \right)
\end{equation}

Then, from the set of equations whose residual surpasses this threshold (which is shown to at least contain the equation selected by the maximum distance method), an equation is selected by a probability proportional to the equation's residual squared. 

\underline{$1$-Markovian:} Given that the threshold relies only on the current iterate value and that the random selection criteria only relies on the current residual, it follows that the Greedy Randomized Kaczmarz method is $1$-Markovian.

\underline{Magnitude Invariance:} Suppose $x \not\in H$. For $\lambda > 0$, let $x(\lambda) = P_H(x) + \lambda( x - P_{H}(x) )$. Then, by \cref{lemma:residual-mag-invar}, 
\begin{equation}
\begin{aligned}
&\frac{1}{\norm{Ax(\lambda) - b}_2^2} \max_{ e \in \lbrace e_1,\ldots,e_n \rbrace} \frac{|e' (A x(\lambda) - b) |^2}{\norm{ A'e }_2^2} \\
&= \quad \frac{1}{\lambda^2 \norm{Ax - b}_2^2}  \max_{ e \in \lbrace e_1,\ldots,e_n \rbrace} \frac{\lambda^2|e' (Ax- b) |^2}{\norm{ A'e }_2^2},
\end{aligned} 
\end{equation}
which implies that the threshold is magnitude invariant. Similarly, we can show that the selection probabilities are magnitude invariant (we look at the preceding calculation, but only for a nonempty subset of the equations).

\underline{Exploratory:} Let $V \subsetneq \rrow(A)$ be a nontrivial subspace. Then for any $x \in \mathbb{S}(x_{\row}^*) \cap V$, we saw that any equations for which $P_V A_{i,\cdot} = 0$ have a zero residual. Therefore, the only equations with nonzero residuals are those that not orthogonal to $V$. Since the threshold is bounded away from zero, only equations that are not orthogonal to $V$ can be in the subset. Therefore,
\begin{equation}
\sup_{V \subsetneq \rrow(A)} \sup_{ x \in \mathbb{S}(x_{\row}^*) \cap V } \Prb{ A'\varphi(A,b,x) \perp V } = 0.
\end{equation}
 
By the same argument as above, \cref{theorem:adaptive-convergence} follows. Similarly, $\E{ \tau_{\ell}} \leq \ell \dim( \rrow(A) )$. $\quad\blacksquare$

\paragraph{Sampling Kaczmarz-Motzkin.} In \cite{haddock2019} (Page 4), a subset of equations are randomly selected, and then the equation with the maximum residual is selected from this subset. 

\underline{$1$-Markovian:} The Sampling Kaczmarz-Motzkin method only relies on the current residual to sample. As a result, it is $1$-Markovian.

\underline{Magnitude Invariance:} The \textit{distribution} of the initial subsetting is independent and identical at each iteration. Therefore, conditioned on a given subset, we choose the maximum residual. By \cref{lemma:residual-mag-invar}, this last step is magnitude invariant. Moreover, since the random subsetting is independent and identical at each iteration, it too is magnitude invariant. Therefore, the entire procedure is magnitude invariant.

\underline{Exploratory:} Let $V \subsetneq \rrow(A)$ be a nontrivial subspace. Then, for any $x \in \mathbb{S}(x_{\row}^*) \cap V$, we have shown that there exists a $j$ such that $A_{j,\cdot}'x -b_j \neq 0$. Therefore, so long as the probability of selecting this equation is nonzero, then we are guaranteed that there is some choice of $\varphi(A,b,x)$ such that
\begin{equation}
\Prb{ A'\varphi(A,b,x) \perp V } \leq 1 - \Prb{ \mathrm{choosing ~ j ~ in ~ the ~ subset } }.
\end{equation}
Let $\pi$ be the smallest inclusion probability for any equation in the random subset. Then, it follows that 
\begin{equation}
\sup_{V \subsetneq \rrow(A)} \sup_{ x \in \mathbb{S}(x_{\row}^*) \cap V } \Prb{ A'\varphi(A,b,x) \perp V } \leq 1 - \pi.
\end{equation}
For the Sampling Kaczmarz-Motzkin method, the minimum inclusion probability is at least $\psi/n$, which corresponds to random sampling without replacement of subsets of size $\psi$. 

With these three properties verified and by \cref{lemma:convergence-rate-bound}, the conditions of \cref{theorem:adaptive-convergence} are satisfied and the result holds. The only thing left to show is that $\E{ \tau_{\ell}}$ are bounded by some $\ell \kappa$. By the proof of \cref{theorem:adaptive-convergence}, it is enough to bound the conditional expectations of $s_j$ in \cref{eqn:s_j}. Supposing  that $\pi$ for all $V \subsetneq \rrow(A)$, 
\begin{equation}
\condPrb{s_j = k}{\xi, s_1,\ldots,s_{j-1}, x_{\xi},\ldots,x_{\xi+s_1+\cdots+s_{j-1}+1 } } \leq (1 - \psi/n)^{k-1} \psi/n
\end{equation}
Hence, $\E{\tau_{\ell}} \leq \ell n\dim( \rrow(A))/\psi$. $\quad\blacksquare$

\section{Numerical Experiments} \label{section: experiments}
Here, we present a variety of numerical experiments to study the practicality of our approach in a sequential computing environment. Specifically, we test forty-nine systems with five hundred equations and five hundred unknowns. The coefficients are generated from forty-nine built-in matrices found in the \texttt{MatrixDepot} package for the \texttt{Julia} programming language \citep{zhang2016}. The solution to the equation is then generated using a standard, multi-variate normal vector. The constant vector is generated by the product of the two. Then, using the generated coefficient matrix and the generated constant vector, we solve the systems by varying the sample-generation method (i.e., the generation of $w$ and $\lbrace w_l \rbrace$) and the solver. The sample generation method is either produced by the count-sketch approach, the Gaussian approach, by uniformly sampling the equations of the matrix with replacement, or by uniformly sampling the equations of the matrix without replacement. The solver is  either a base method, the complete method, an intermediate method with $m=5$, or an intermediate method with $m=10$. Finally, we initialize $x_0 = 0$.

We recorded the wall clock time to improve the initial residual norm by a factor of ten with an upper bound of three seconds (note, a single iteration of a base method requires approximately $10^{-6}$ seconds, which allows the base method on the order of $10^6$ iterations on a $500 \times 500$ system). If the temporal upper bound is reached before a ten fold improvement in the initial residual norm is observed, the wall clock times is reported as $10^{99}$. Inherently, this metric results in a disadvantage for complete orthogonalization methods as such methods pay more for marginal improvements, but generate precise solutions with fewer iterations. However, with an eye towards solving much larger systems that require using a parallel or distributed environment, this metric of time-to-ten-fold improvement is the appropriate choice as the complete method would not be appropriate in such environments owing to the high communication costs that would be incurred. For the count-sketch sampling method, the wall clock times are reported in \cref{table:count_sketch}. For the remaining sampling approaches, the wall clock times are reported in the appendix.

\begin{table}[p]
\centering
\csvloop{
	tabular={lSSSS},
	file=outputs/MatrixMarket_CountSketch_10x-Improve-Time.csv,
	head to column names,
	before reading=\sisetup{table-format = 1.3e-1,table-number-alignment = center,table-auto-round},
	table head=\toprule\textbf{System} &\textbf{Base} &\textbf{Partial, $m=5$} &\textbf{Partial, $m=10$} &\textbf{Complete} \\\midrule,
	command=\Matrix&\Base&\PartFive&\PartTen&\Comp,
	table foot={\bottomrule}}
\caption{Wall clock time for ten-fold improvement of four solvers under the count-sketch sampling approach.}
\label{table:count_sketch}
\end{table}

While further analysis of each system would be necessary to understand the behavior of the solvers on each system, there are several important messages within \cref{table:count_sketch}. First, the base method often fails to achieve a ten-fold improvement despite the substantial number of iterations that the base solver is allowed (again, on the order of $10^6$). Unfortunately, the base method's poor behavior is observed even for the other sampling methods. Based on \cref{theorem:no-mem-convergence}, this would imply that either the stopping times $\lbrace \tau_l \rbrace$ are large and/or the rate of convergence (determined by $\lbrace \gamma_l \rbrace$) are too small. Given that this behavior is observed even for the random cyclic sampling case (which, by \cref{theorem-no-mem-w-o-replacement}, implies that the differences between the stopping times are bounded by a thousand), it is likely that the rate of convergence for such systems is close to unity. 

However, we see a tremendous benefit even from a small amount of partial orthogonalization. That is, the intermediate solvers with $m=5$ and $m=10$ perform quite well. In particular, whenever complete orthogonalization achieves a ten-fold improvement within the allotted time, the partial orthogonalization methods also achieve the ten-fold improvement within the allotted time and often orders of magnitude faster. Thus, for cases when the base method performs poorly, a small amount of partial orthogonalization is able to usually able to remedy this poor behavior. One final observation is that the $m=5$ method often outperforms the $m=10$ method. This seems to be because of the memory-management and garbage collection time related to modifying the set $\mathcal{S}$, which we did not optimize for these experiments. Thus, a more complete study would require a detailed optimization of how $\mathcal{S}$ is handled.

\section{Conclusion} \label{section:conclusion}
To reiterate, our motivation was to address the two practical challenges of the typical sketch-\textit{then}-solve approach for solving linear systems. These practical challenges are: there is no clear way of choosing the size of the sketching matrix \textit{apriori}; and there is a nontrivial storage costs of the sketched system. We made progress towards addressing these challenges by reformulating the sketch-\textit{then}-solve approach to a sketch-\textit{and}-solve approach in which the sketched system is implicitly constructed and solved simultaneously. The main idea of the reformulation is to construct the equations of the sketched system one-at-a-time and then use an orthogonalization scheme to solve the system as each sketched equation is observed. As a result, we addressed the concern of determining the sketching matrix's dimensions because, under our reformulation, the sketching matrix could be grown to an arbitrary size during the calculation up to a user-defined stopping criteria, which may be based on closeness to a solution or based on a computational budget. Moreover, we addressed the cost of storing the sketched system because we do not need to explicitly form the entire sketched system under our reformulation. However, we traded this storage problem with another one---albeit less onerous---of storing the matrix $S$. Finally, we address the overlooked practical challenge of solving the sketched system by using our orthogonalization scheme to solve the implicitly sketched system under our reformulation. 

When $d$ becomes very large, storing and manipulating $S$ becomes prohibitive. Because of the challenges introduced by $S$, we proposed intermediate methods that implicitly stored $S$ using only a handful of vectors. The result was a collection of partial orthogonalization schemes, and, in the limit of not storing any vectors for $S$ (i.e., $S$ becomes the identity), we recovered what we called ``base methods,'' which included the important special cases of randomized Kaczmarz and randomized Gauss-Seidel. As a result, we were able to make a conceptual connection between random sketching method (i.e., complete orthogonalization methods under our formulation) and the usual randomized iterative methods (i.e., base methods under our formulation). Importantly, we were able to leverage this conceptual relationship between the two to connect the convergence theory of the complete orthogonalization method to the convergence theory of the base methods. The key ingredient here is that the stopping time that was defined for the complete orthogonalization method encoded information about exploration of a subspace that contained the solution of the sketched system. This stopping time was then used (in a repeated fashion) to guarantee that a certain amount of progress for the base methods is achieved. As a result, we were able to produce a convergence theory for these base methods that was both quite general and complemented and improved on previous results. In fact, we were able to use this theory to prove convergence for a broad class of adaptive sampling methods, and supply rates of convergence.

The predominant missing component of this work is the rigorous analysis of the so-called intermediate methods that reside between the base methods and the complete methods. Such an analysis is certainly warranted owing to the impressive numerical performance of these intermediate methods as demonstrated in our experiments. Owing, primarily, to the additional complexity of analyzing such intermediate methods and, secondarily, of space limitations, a rigorous analysis of these methods will be the focus of future work. Additionally, an efficient implementation at scale for challenging problems arising in partial differential equations with a detailed comparison to existing state-of-the-art methods will be included in future work.

\small
\bibliographystyle{plainnat}
\bibliography{bibliography}

\begin{appendix}
\section{Numerical Experiments, Continued}
Here, we present the wall clock time to a ten-fold improvement for the remaining sampling schemes: Gaussian (\cref{table:gaussian}), uniformly choosing an equation with replacement (\cref{table:row-wr}), uniformly choosing an equation without replacement (\cref{table:row-cyc}). 

Again, we recorded the wall clock time to improve the initial residual norm by a factor of ten with an upper bound of three seconds (note, a single iteration of a base method requires approximately $10^{-6}$ seconds, which allows the base method on the order of $10^6$ iterations on a $500 \times 500$ system). If the temporal upper bound is reached before a ten fold improvement in the initial residual norm is observed, the wall clock times is reported as $10^{99}$. Inherently, this metric results in a disadvantage for complete orthogonalization methods as such methods pay more for marginal improvements, but generate precise solutions with fewer iterations. However, with an eye towards solving much larger systems that require using a parallel or distributed environment, this metric of time-to-ten-fold improvement is the appropriate choice as the complete method would not be appropriate in such environments owing to the high communication costs that would be incurred.

\begin{table}[p]
\centering
\csvloop{
	tabular={lSSSS},
	file=outputs/MatrixMarket_Gaussian_10x-Improve-Time.csv,
	head to column names,
	before reading=\sisetup{table-format = 1.3e-1,table-number-alignment = center,table-auto-round},
	table head=\toprule\textbf{System} &\textbf{Base} &\textbf{Partial, $m=5$} &\textbf{Partial, $m=10$} &\textbf{Complete} \\\midrule,
	command=\Matrix&\Base&\PartFive&\PartTen&\Comp,
	table foot={\bottomrule}}
\caption{Wall clock time for ten-fold improvement of four solvers under the Gaussian sampling approach.}
\label{table:gaussian}
\end{table}

\begin{table}[p]
\centering
\csvloop{
	tabular={lSSSS},
	file=outputs/MatrixMarket_KaczmarzWR_10x-Improve-Time.csv,
	head to column names,
	before reading=\sisetup{table-format = 1.3e-1,table-number-alignment = center,table-auto-round},
	table head=\toprule\textbf{System} &\textbf{Base} &\textbf{Partial, $m=5$} &\textbf{Partial, $m=10$} &\textbf{Complete} \\\midrule,
	command=\Matrix&\Base&\PartFive&\PartTen&\Comp,
	table foot={\bottomrule}}
\caption{Wall clock time for ten-fold improvement of four solvers when equations are sampled uniformly with replacement.}
\label{table:row-wr}
\end{table}

\begin{table}[p]
\centering
\csvloop{
	tabular={lSSSS},
	file=outputs/MatrixMarket_KaczmarzCyc_10x-Improve-Time.csv,
	head to column names,
	before reading=\sisetup{table-format = 1.3e-1,table-number-alignment = center,table-auto-round},
	table head=\toprule\textbf{System} &\textbf{Base} &\textbf{Partial, $m=5$} &\textbf{Partial, $m=10$} &\textbf{Complete} \\\midrule,
	command=\Matrix&\Base&\PartFive&\PartTen&\Comp,
	table foot={\bottomrule}}
\caption{Wall clock time for ten-fold improvement of four solvers when equations are sampled uniformly without replacement.}
\label{table:row-cyc}
\end{table}
\end{appendix}

\end{document}